\newcommand{\R}{\mathbb{R}}
\newcommand{\N}{\mathbb{N}}
\newcommand{\Z}{\mathbb{Z}}
\newcommand{\A}{\mathcal{A}}
\newcommand{\D}{\mathcal{D}}
\newcommand{\Q}{\mathcal{Q}}
\newcommand{\bx}{\bar{x}}
\newcommand{\ba}{\bar{a}}
\newcommand{\bb}{\bar{b}}
\newcommand{\ta}{\tilde{a}}
\newcommand{\tb}{\tilde{b}}
\newcommand{\dagA}{A^\dagger}
\newcommand{\bydef}{\,\stackrel{\mbox{\tiny\textnormal{\raisebox{0ex}[0ex][0ex]{def}}}}{=}\,} 
\newcommand{\hv}{\widehat{v}}
\newcommand{\HF}{{\mathrm{HF}}}
\newtheorem{thm}{Theorem}[section]
\newtheorem{defn}[thm]{Definition}
\newtheorem{cor}[thm]{Corollary}
\newtheorem{lem}[thm]{Lemma}
\newtheorem{prop}[thm]{Proposition}
\newtheorem{rem}[thm]{Remark}
\begin{document}
	
\title{
	Towards computational Morse-Floer homology:\\
	forcing results for connecting orbits\\ 
	by computing relative indices of critical points
}

\author{
Jan Bouwe van den Berg
\thanks{
VU Amsterdam, Department of Mathematics, De Boelelaan 1081, 1081 HV Amsterdam, The Netherlands. ({\tt janbouwe@few.vu.nl})
}
\and 
Marcio Gameiro 
\thanks{
Department of Mathematics, Rutgers University, The State University of New Jersey, Piscataway, NJ, 08854, USA ({\tt gameiro@math.rutgers.edu})
and 
Instituto de Ci\^{e}ncias Matem\'{a}ticas e de Computa\c{c}\~{a}o, Universidade de S\~{a}o Paulo, Caixa Postal 668, 13560-970, S\~{a}o Carlos, SP, Brazil ({\tt gameiro@icmc.usp.br})
}
\and 
Jean-Philippe Lessard
\thanks{
McGill University, Department of Mathematics and Statistics, Burnside Hall, 805 Sherbrooke Street West, Montreal, Qu\'ebec H3A 0B9, Canada. ({\tt  jp.lessard@mcgill.ca})
}
\and 
Rob Van der Vorst
\thanks{
VU Amsterdam, Department of Mathematics, De Boelelaan 1081, 1081 HV Amsterdam, The Netherlands. ({\tt  r.c.a.m.vander.vorst@vu.nl})
}
} 

\date{}

\maketitle

\begin{abstract}
To make progress towards better computability of Morse-Floer homology, and thus
enhance the applicability of Floer theory, it is essential to have tools to
determine the relative index of equilibria. Since even the existence of
nontrivial stationary points is often difficult to accomplish, extracting their
index information is usually out of reach. In this paper we establish a
computer-assisted proof approach to determining relative indices of stationary
states. We introduce the general framework and then focus on three example
problems described by partial differential equations to show how these ideas
work in practice. Based on a rigorous implementation, with accompanying code
made available, we determine the relative indices of many stationary points.
Moreover, we show how forcing results can be then used to prove theorems about
connecting orbits and traveling waves in partial differential equations.
\end{abstract}

\begin{center}
{\bf \small Mathematics Subject Classification (2020)} \\ \vspace{.05cm}
{ \small 57R58 $\cdot$ 35R25 $\cdot$ 65M30 $\cdot$ 65G40 $\cdot$  35K57}
\end{center}

\begin{center}
{\bf \small Keywords} \\ \vspace{.05cm}
 { \small Floer homology $\cdot$ relative indices $\cdot$ ill-posed PDEs $\cdot$ strongly indefinite problems \\ equilibrium solutions $\cdot$ connecting orbits $\cdot$ forcing theorems $\cdot$ computer-assisted proofs}
\end{center}

\begin{center}
{\em Communicated by Peter Bubenik}
\end{center}


\section{Introduction} \label{s:intro}


Morse-Floer homology is a flourishing algebraic-topological construction in the mathematical toolbox for studying variational problems. 
The precursor to Floer homology was first created as a dynamical systems alternative for the intrinsic construction of the homology of compact manifolds --- Morse homology~\cite{MR0228014,MR0029160,MR683171}. Subsequently it has been generalized to many other contexts in analysis and geometry, including infinite dimensional settings.

Without giving a comprehensive description (we refer to~\cite{MR1045282,MR1239174}), we nevertheless first  sketch an outline of Morse-Floer homology  to set the scene. 
A Morse-Floer homology is built from the critical points of the objective functional $\A$, which are described by solutions of the Euler-Lagrange equation $d \A(u)=0$, as well as the non-equilibrium orbits of an associated (formal) gradient flow $u'=-\nabla \A(u)$. The latter depends on the choice of inner product, since it is defined through the relation $d\A(u)v=\langle\nabla \A,v\rangle$ for the directional derivative in the direction~$v$. Besides these two types of solutions (equilibria and connecting orbits) of differential equations, the third ingredient in the homology construction is an (integer valued) index $i(u_*)$ that is associated to each non-degenerate critical point $u_*$. In the classical, finite dimensional setting, this index counts the number $n(u_*)$ of negative eigenvalues of the Hessian matrix $d^2 \!\A (u_*)$, the Morse index, which is interpreted dynamically as the dimension of the unstable manifold of $u_*$ for the gradient flow. 

In an infinite dimensional semi-flow setting the number $n(u_*)$ is usually finite (although the number of positive eigenvalues is infinite now), and one may again use $i(u_*)=n(u_*)$ as the index, and the resulting construction is usually still called a Morse homology.
In other cases, so-called strongly indefinite problems, however, the linear operator associated to the second derivative of $\A$ has both infinitely many negative and infinitely many positive eigenvalues, hence an alternative definition of the index is needed. The breakthrough idea put forward by Floer~\cite{MR965228,MR987770} in
the context of Hamiltonian dynamics, is to define a \emph{relative} index: we have no absolute index but only ever compare the indices of two critical points, say $u_*^1$ and $u_*^2$. In essence this relative index $i(u_*^1,u_*^2)$ counts how many eigenvalues cross over from negative to positive
when one follows a path of linear operators along a homotopy from $d^2 \!\A (u_*^1)$ to $d^2 \! \A (u_*^2)$ --- spectral flow (e.g.\ see~\cite{MR1331677}). Once such a relative index is properly defined (e.g.~taking into account eigenvalues can cross $0$ in both directions), the homology construction can be carried out under suitable compactness and generic transversality conditions. For strongly indefinite problems the resulting homology, based on a relative index rather than a Morse index, is usually called a Floer homology. Clearly Floer homology is a generalization of Morse homology, whereas the Morse index can be seen as a special case of a relative index: $i(u_*^1,u_*^2) = n(u_*^1)-n(u_*^2)$. On the other hand, by choosing a fixed critical point $u_*^0$, one can attach an index to any critical point by setting $\tilde{n}(u_*)=i(u_*^0,u_*)$,
and one may even choose any fixed hyperbolic linear operator as the ``base point'' rather than an equilibrium, see Section~\ref{s:theory} for more details.  We will refer to the collective of such constructions as Morse-Floer theory. 


In this paper we discuss how to compute relative indices of critical points in strongly indefinite variational problems using \emph{computer-assisted} proof techniques. 
For all its popularity and success, Morse-Floer homology is renowned for being difficult to compute explicitly. Recent developments in rigorous computer-assisted analysis of dynamical systems~\cite{notices_jb_jp} brings the opportunity to compute at least some of the ingredients of the Morse-Floer homology construction explicitly, namely critical points and their relative (or Morse) indices. We mention that progress on connecting orbits in infinite dimensions is also being made~\cite{MR3773757,MR2136516,MR3516860}, but the central topic of the current paper is the computations of relative indices.
As hinted at above, this requires the understanding, in a mathematically rigorous computational framework, of the spectral flow along a homotopy between two linear operators on some infinite dimensional space.  
Using relatively simple model equations (so that we can focus on the ideas rather than the technicalities, which may become quite involved in more complex systems), we show that equilibria and, in particular, their relative/Morse indices are computable. We shall also see that the difficulties in computing relative indices are analogous to those for Morse indices (in infinite dimensions, e.g.\ semi-flows). 


Although it is exciting to be able to compute relative indices in view of the long term goal of advancing the applicability of Morse-Floer theory, there are also more direct benefits. In particular, the index information improves the concrete forcing results that we obtain from Morse-Floer homology. 
Indeed, the chain groups in the homology are generated by the critical points, graded by their relative or Morse indices, while
connecting orbits between critical points with relative index $\pm 1$ are used to construct the boundary operator. 
Hence, the homology encodes information about the relation between critical points, their indices, and the connecting orbits between them.  
Exploiting that the homology is an algebraic-topological \emph{invariant} allows us to compare the homology of related systems, for example at different parameter values.  Seeded with computed information about equilibria and their relative indices, this leads to forcing results about the existence of additional critical points as well as connecting orbits. Although some of the forcing results follow from the mere existence of equilibria, the (relative) indices provide refined information, leading to enhanced forcing relations. 
We discuss examples of such forcing results below, where we simultaneously compute and prove equilibria and their relative indices, and then draw conclusions about the minimal set of connecting orbits that is forced by these. 
Even though our choice of model equations are relatively simple so that we can focus on the concepts, some of the forcing results are nevertheless novel. 

The main contribution of this paper is in understanding connecting orbits in PDEs through
a combination of topological methods and computer-assisted proof techniques. We
hasten to add that computer-assisted proofs of equilibrium solutions of PDEs
have a long tradition. The usual approach is based on various variants of
Newton-Kantorovich type arguments. We build on this in the current paper to
determine stationary solutions together with their relative indices, eventually
leading to the forcing results for connecting orbits. For computer-assisted
proofs and rigorous numerics in the context of dynamical systems and
(stationary) PDEs we refer to the expository works
\cite{MR1420838,MR2652784,MR2807595,jay_konstantin_survey,gomez_survey,NPWbook} and the references therein. It is furthermore worth mentioning that
often rigorous control on the spectrum of a certain linear operator (typically
the linearization of the PDE about a numerical solution) leads to a bound on the norm of its inverse and this is then used in a fixed point theorem to prove
(constructively) the existence of a solution, see
e.g.~\cite{MR2019251,MR2492179}. Another link of the computer-assisted proof
literature with our work on relative indices for strongly indefinite problems,
is that when the spectrum is unbounded in one direction only (e.g.\ in parabolic problems), then rigorous computer-assisted
eigenvalue bounds can be used to compute Morse indices, as for example
in~\cite{BHMP}. That powerful methodology, while applicable in a wide variety of contexts (see again~\cite{NPWbook}), does however not extend to the strongly indefinite case considered here.

\subsection*{Three example problems}

To illustrate the central ideas of this paper,
we will use three problems, for which we have implemented the computer-assisted computations to obtain the indices of critical points. 
The first example is the classical application of Floer theory: the Cauchy-Riemann equations
\begin{equation}\label{e:CR}
\begin{cases}
u_t  = v_x + \psi_\lambda(u) ,\\
v_t  = -u_x + v, \\
u_x(t,0)=u_x(t,\pi)=0 , \\
v(t,0)=v(t,\pi)=0 , 
\end{cases}
\end{equation}
where $\psi_\lambda : \R \to \R$ is some smooth nonlinear function.
We write both boundary conditions for $u$ and for $v$, although the Neumann boundary conditions on $u$ are of course imposed automatically by the Dirichlet boundary conditions on $v$ and the second differential equation.
Throughout this paper we will restrict attention to
\[
  \psi_\lambda(u) \bydef \lambda_1 u - \lambda_2 u^3,
\]
where $\lambda_1,\lambda_2 \in \R$ are parameters, but the method works for much more general nonlinearities. Although rescaling could reduce the number of parameters when the signs of $\lambda_1$ and $\lambda_2$ are fixed, keeping two parameters turns out to be advantageous when capitalizing on continuation arguments. In particular, while from the viewpoint of pattern formation and forcing results the interesting case to consider is when both parameters are positive, when homotoping it is convenient to allow $\lambda_1$ to change sign. We come back to this later.

In~\eqref{e:CR} we have chosen Neumann boundary condition on $u$ and Dirichlet boundary conditions on $v$.  In this and all other examples we choose Neumann/Dirichlet boundary conditions rather than periodic ones in order to avoid the issues related to shift invariance (which would make all critical points degenerate).
The time variable is $t$, but this problem is ill-posed hence there is no flow in forward or backward time. 

The equation has a variational structure, as~\eqref{e:CR} is the formal (negative) $L^2$-gradient flow of the action functional
\begin{equation} \label{eq:functional_CR}
\A_{\text{CR}} \bydef \int_0^{\pi} \Bigl[ v u_x - \frac{1}{2} v^2 - \Psi_\lambda(u) \Bigr] dx,
\end{equation}
where $\Psi_\lambda(u)=\frac{\lambda_1}{2}u^2-\frac{\lambda_2}{4}u^4$ is an anti-derivative of $\psi_\lambda(u)$.

Our second example is
\begin{equation}\label{e:TW}
\begin{cases}
  u_{tt} - c u_t + u_{x_1 x_1}+u_{x_2 x_2} + \psi_\lambda(u) =0 \qquad\text{for } x=(x_1,x_2)  \in  [0,\pi] \times [0,\pi],\\
  u_{x_1}(t,0,x_2)=u_{x_1}(t,\pi,x_2)=0,\\
      u_{x_2}(t,x_1,0)=u_{x_2}(t,x_1,\pi)=0.
\end{cases}
\end{equation}
Here $c > 0$ is a parameter that has the interpretation of the wave speed, since~\eqref{e:TW} results from substituting a travelling wave Ansatz into the parabolic equation
\begin{equation}\label{e:cylinder}
  u_t = \Delta u +\psi_\lambda(u) = u_{x_1 x_1} + u_{x_2 x_2}  + u_{x_3 x_3} +\psi_\lambda(u), \qquad\text{for } t,x_3 \in \R, \, x_1,x_2 \in  [0,\pi],
\end{equation}
with Neumann boundary conditions on the ``cylindrical'' spatial domain $[0,\pi]^2 \times \R$.
Hence solutions of~\eqref{e:TW} correspond to travelling wave solutions of~\eqref{e:cylinder} on the infinite cylinder, see e.g.~\cite{BakkervdBergvdVorst,FSV,Gardner,Mielke}.
The problem~\eqref{e:TW} is not quite a gradient flow, but rather it is gradient-like. This still suffices for a Morse-Floer homology construction. Indeed, for the problem~\eqref{e:TW} the details of this construction can be found in~\cite{BakkervdBergvdVorst}. The functional
\begin{equation} \label{eq:functional_TW}
\A_{\text{TW}} \bydef \int_0^\pi \int_0^\pi \Bigl[ -\frac{1}{2} (u_t)^2 + \frac{1}{2} ((u_{x_1})^2+(u_{x_2})^2) - \Psi_\lambda(u) \Bigr]  dx_1 dx_2
\end{equation}
serves as Lyapunov function for solutions of~\eqref{e:TW} for any $c>0$.

Our third example is the Ohta-Kawasaki equation~\cite{Ohta-Kawasaki}
\begin{equation}\label{e:OK}
	\begin{cases} 
   u_t = - u_{xxxx} - (\psi_\lambda(u))_{xx} - \lambda_3 u, &\quad\text{for } x\in [0,\pi],\\
    u_x(t,0)=u_x(t,\pi)=0, \\
    u_{xxx}(t,0)=u_{xxx}(t,\pi)=0,\\
	\int_0^\pi u(0,x) dx =0 ,
  \end{cases}
\end{equation}
which is used to model diblock copolymers~\cite{MR1334695,MR2496714,MR2685742}.
The extra parameter $\lambda_3 \geq 0$ describes the strength of the (attractive) long range interactions in the mixture.
The space of functions $u$ satisfying $\int_0^\pi u(x) dx =0$ is invariant (the general Ohta-Kawasaki model has a parameter $m$ that denotes the mass ratio of the two constituents in the mixture; for simplicity we consider the case $m=0$ only, corresponding to a 50\%-50\% mixture).  
Equation~\eqref{e:OK} does not have an ill-posed initial value problem, but generates a semi-flow. 
Indeed, we use it to illustrate that the computation of a Morse and a relative index can be treated in a unified framework. 
The flow generated by~\eqref{e:OK} is the formal negative gradient flow in the dual space of $\{u\in H^1 : \int_0^\pi u(x)dx =0\}$ (a natural space for modeling mass conservation) for the functional
\begin{equation} \label{eq:functional_OK}
\A_{\text{OK}} \bydef \int_0^\pi \Bigl[ \frac{1}{2} (u_x) ^2 - \Psi_\lambda(u) + \frac{\lambda_3}{2} (\phi_x)^2  \Bigr] dx,
\end{equation}
where $\phi$ is the unique solution of the elliptic problem
\[
\begin{cases}
    -\phi_{xx} = u, & \quad\text{for } x\in [0,\pi], \\
    \phi_x(0)=\phi_x(\pi)=0,\\
	\int_0^\pi \phi(x) dx =0 .   
\end{cases}
\]

\subsection*{Sample results}

In gradient(-like) systems the only type of (bounded) solutions that exist for all time $t \in \R$ are equilibria and heteroclinic connections. We will not assume all the equilibria to be nondegenerate (since that is very difficult to check). Therefore, we define connecting orbits as orbits for which the $\alpha$ and $\omega$ limit sets are disjoint and consists of equilibria only. 
Although generically these are classical connecting orbits between nondegenerate equilibria (indeed, the definition of Morse-Floer homology is built on that), this broader definition allows one to draw more general conclusions.

For the Cauchy-Riemann problem~\eqref{e:CR} we determine the Floer homology by continuation to the case $\widetilde{\psi}(u)=-u-u^3$. In that case there is a unique equilibrium solution $(u,v)(x) \equiv (0,0)$. This stationary point is hyperbolic and we use the associated linear operator as the base point relative to which we define indices. We note that $(u,v)=(0,0)$ is an equilibrium for any $\lambda_1 , \lambda_2 \in \R$, but we choose a linear operator as the base point for the relative index that is \emph{independent} of $\lambda$ and thus in general does \emph{not} correspond to the linearization at the trivial equilibrium. Indeed, choosing a fixed base point allows one to use the invariance of Floer homology under continuation (see Theorem~\ref{homotopy2}) to determine explicitly the (Betti numbers of the) Floer homology, see the proof below.

We now state a sample result. Of the seven equilibria at these particular parameter values only two need to be verified by computer-assisted proof, as the remainig ones are either related by symmetry or homogenous.
\begin{thm}\label{t:CR}
	For $\lambda_1=\lambda_2=6$ the Cauchy-Riemann problem~\eqref{e:CR} has at least seven equilibrium solutions with relative indices $0,0,1,1,2,2,3$.  Moreover, there are at least three connecting orbits, of which at least two have nontrivial spatial dependence.
\end{thm}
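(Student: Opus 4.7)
The plan is to combine a rigorous enumeration of equilibria with a rigorous computation of their relative indices, and then invoke the invariance of Floer homology under a homotopy back to the linear base point in order to force the existence of connecting orbits. At equilibrium the second line of~\eqref{e:CR} gives $v=u_x$, and substituting into the first reduces the problem to the scalar two-point boundary value problem $u_{xx}+6u-6u^3=0$ with $u_x(0)=u_x(\pi)=0$. The constants $u\equiv 0$ and $u\equiv\pm 1$ are immediate; the linearization at $u\equiv 0$ has negative spatial eigenvalues $k^2-6$ for $k=1,2$, from which one expects two $u\mapsto -u$-symmetric pairs of nonconstant equilibria bifurcating from the $k=1$ and $k=2$ cosine modes, giving seven equilibria in total.

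To enclose the four nonconstant equilibria rigorously, I would expand candidates in a Fourier cosine series (matching the Neumann boundary conditions), truncate to a Galerkin zero-finding problem, and apply a radii-polynomial / interval-Newton argument to close a small ball around each numerical approximation, using weighted $\ell^1$ tail estimates to control the infinite-dimensional complement. For each equilibrium $u_*$, the relative index is then the spectral flow of a one-parameter family $L_{u_*(s)}=J\partial_x+A(x;s)$ of self-adjoint operators along a homotopy $s\mapsto(\lambda_1(s),\lambda_2(s))$ from $\widetilde{\psi}(u)=-u$ to $(\lambda_1,\lambda_2)=(6,6)$. This homotopy should be chosen so that every equilibrium extends continuously and no escape or blow-up occurs---for instance by first varying $\lambda_1$ with $\lambda_2=0$ and then turning on $\lambda_2$. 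The spectrum is enclosed by interval arithmetic on a Galerkin truncation plus a tail bound, and signed zero-crossings are tallied to produce the indices $0,0,1,1,2,2,3$.

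The forcing statement then follows from invariance of Floer homology along this continuation. At the base point the homology has rank one (concentrated in the degree of the unique equilibrium $(0,0)$), so the chain complex at $(\lambda_1,\lambda_2)=(6,6)$, with generator counts $(2,2,2,1)$ in degrees $(0,1,2,3)$, must also have one-dimensional total homology. A short rank count then forces each of the boundary maps $\partial_1$, $\partial_2$, $\partial_3$ to have rank exactly one. Working over $\mathbb{F}_2$, each rank-one boundary contains at least one nonzero entry, and each such entry counts (mod $2$) an odd, hence positive, number of connecting orbits between a specific pair of equilibria, yielding at least three connecting orbits in total. Since the endpoints of $\partial_2$ and $\partial_3$ involve spatially nonconstant equilibria, the orbits they count necessarily carry nontrivial spatial dependence.

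The hard part will be the rigorous spectral-flow computation: the spectrum of the family $L_{u_*(s)}$ must be enclosed tightly enough to detect every zero crossing and exclude spurious ones, uniformly along the homotopy path, while at the same time the continuation of each equilibrium must be controlled so as to rule out secondary bifurcations or blow-up. A single missed crossing would shift an index by one and, through the rank count above, could collapse the forcing conclusion.
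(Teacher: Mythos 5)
Your overall architecture (validated equilibria, relative indices, homology invariance, forcing) matches the paper, but two steps as you describe them would fail. First, the index computation: you propose to follow each equilibrium $u_*(s)$ along a parameter homotopy from $(\lambda_1,\lambda_2)=(6,6)$ back to the linear problem and to enclose the spectrum of $L_{u_*(s)}$ uniformly, while ``ruling out secondary bifurcations or blow-up.'' That requirement cannot be met: the linear problem has a single equilibrium, so along \emph{any} such path the seven equilibria must either coalesce with the trivial branch (i.e.\ bifurcate) or escape; along your specific path (switching off $\lambda_2$ at $\lambda_1=6$) the states $\pm1$ and the nonconstant equilibria blow up like $\lambda_2^{-1/2}$. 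The continuation of equilibria is also unnecessary, because spectral flow depends only on the endpoints of the operator path. The paper exploits exactly this: Theorem~\ref{thm:homotopy_dagA_DF} shows (using only the $Y_0,Z_0,Z_1,Z_2$ bounds already produced by the radii-polynomial proof) that the straight-line homotopy from $DF(\ta)$ to the approximate derivative $A^\dagger_{\ba}$ has zero spectral flow, and the interpolation between $A^\dagger_{\ba}$ and $A^\dagger_{\bb}$ has a $\sigma$-independent diagonal tail, so by \eqref{e:specflowdagger} and \eqref{e:counteigenvalues} the relative index reduces to a difference of counts of positive eigenvalues of two finite matrices---no spectral enclosure along a path of equilibria is needed. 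A related defect affects your use of homology invariance: Theorem~\ref{homotopy2} requires $\mathcal{N}$ to be isolating for every $s$, and your leg with $\lambda_2=0$, $\lambda_1\in(0,6]$ is noncoercive and passes through $\lambda_1=1,4$, where the linear equation has unbounded continua of equilibria, so no bounded $\mathcal{N}$ remains isolating. The paper's piecewise-linear homotopy through the intermediate point $(-1,6)$ is chosen precisely to stay within coercive nonlinearities so that Proposition~\ref{compact1} applies.

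Second, the forcing step. Your rank count takes the generator counts in degrees $(0,1,2,3)$ to be exactly $(2,2,2,1)$ and reads the boundary maps as counting orbits of the unperturbed equation. But the ball-enclosures are purely local and give no exhaustiveness of the list of critical points, and the chain complex with its boundary operator is only defined after a generic perturbation $(\epsilon,h)$; so your argument needs, in addition, a proof that there are no other equilibria, and even then it yields connecting orbits only for the perturbed flow, which must be passed to the limit $\epsilon\to0$, where endpoints can change and orbits can break. Lemma~\ref{l:forcing} is designed to bypass both problems: it uses only lower bounds $\zeta_k$ on the number of hyperbolic critical points, and the limit argument is built in (hence the factor $\tfrac12$ in \eqref{e:forced}), giving $\tfrac12\sum_k(\zeta_k-\beta_k)_+=3$ orbits with $\zeta=(2,2,2,1)$ and $\beta_0=1$, $\beta_{k\neq0}=0$. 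Spatial nontriviality of at least two orbits then comes from the first part of that lemma: since $\beta_1=\beta_2=0$, each of the four nonconstant equilibria lies in the $\alpha$ or $\omega$ limit set of some connecting orbit, and any orbit with a nonconstant equilibrium in its limit set cannot be spatially homogeneous; your version of this deduction, tied to specific entries of the perturbed boundary maps, does not survive the $\epsilon\to0$ limit without this extra argument.
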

\begin{proof}[Outline of proof]
Continuation of the nonlinearity $\psi_\lambda$ for $\lambda=(6,6)$ to the base point at $\lambda=(-1,1)$ can be performed within the class of coercive nonlinearities, see Section~\ref{geninv12}. This guarantees the necessary compactness properties,see Proposition~\ref{compact1}. 
We obtain $\beta_0=1$ and $\beta_k=0$ for $k\neq 0$, where $\beta_k$ are the Betti numbers of the Floer homology $\HF_k\bigl(\mathcal{S^\infty},\psi\bigr)$, 
where $\mathcal{S^\infty}$ is maximal invariant set in $\mathcal{N} = C^1([0,\pi])$.
cf. Section \ref{construction1}.  

At $\lambda=(6,6)$ the indices of the homogeneous equilibria $(u,v)=(1,0)$,  $(u,v)=(0,0)$ and $(u,v)=(-1,0)$ are $0$,~$3$ and~$0$, respectively, as can be verified by hand or computer.
Two of the other equilibria are depicted in Figure~\ref{f:CR}; see Section~\ref{sec:rig_comp_critical points} for an explanation about the rigorous error control on the distance between the graphs depicted and the true solutions. Their relative indices are 1 and 2. The remaining two equilibria are related to these via the transformation $(u,v) \mapsto (-u,-v)$.

The results on the number and type of connecting orbits are a consequence of the forcing Lemma~\ref{l:forcing}. There, the integers $\zeta_k$ are introduced as lower bounds on the number of hyperbolic critical points of relative index $k$. The relative index information on the seven equilibria implies that we may set
\[
  \zeta_0=2, \qquad
   \zeta_1=2, \qquad
    \zeta_2=2, \qquad
	 \zeta_3=1.
\]
On the other hand, as mentioned above (when we chose the base point) the only nonzero Betti number is $\beta_0=1$. 
The multiplicity result then follows directly from the forcing Lemma~\ref{l:forcing}. Additionally it implies that each of the four nonhomogeneous equilibria (the ones with relative index $1$ and~$2$) forms the $\alpha$ or $\omega$ limit set of at least one connecting orbit.

The remaining details of the proof are filled in Sections~\ref{s:setuphomotopy}  (existence theorem for equilibria and computation of the relative indices) and~\ref{s:CR} (bounds needed for the computer-assisted part of the proof).
\end{proof}

We note that large parts of the analysis of~\eqref{e:CR} can be done by hand, since the equilibria for the particular choice of the right-hand side coincide with those of the \emph{Allen-Cahn} or \emph{Chaffee-Infante} parabolic problem
\[
  u_t= u_{xx} + \psi_{\lambda}(u).
\]
This (bifurcation) problem is analyzed in detail in~\cite{MR804887,MR1347417}.
Furthermore, by using the symmetry one could obtain somewhat stronger forcing results, but we do not pursue that here as it is beside the point of this paper.

For the other two examples we obtain similar results, but here no alternative using pencil-and-paper analysis is available.
For the problem~\eqref{e:TW} we again first select a base point, relative to which we define the indices. Namely, as for the problem~\eqref{e:CR}, for the linear case $\widetilde{\psi}(u)=-u$ there is a unique, hyperbolic equilibrium solution $u(x) \equiv 0$. We choose the associated linear operator (where we may pick any $c>0$) as our base point. We can now formulate the following sample results. 
\begin{thm}\label{t:TW}
	For $\lambda_1=\lambda_2=12$ the travelling wave problem~\eqref{e:TW} has at least 71 equilibrium solutions with relative indices $0$ (2$\times$), $2$ (8$\times$), $3$ (8$\times$), $4$ (8$\times$), $5$ (8$\times$), $6$ (12$\times$), $7$ (8$\times$), $8$ (6$\times$), $10$ (4$\times$), $11$ (4$\times$), $12$ (2$\times$),  and $13$ (1$\times$). Moreover, for any $c>0$ there are at least 35 connecting orbits, corresponding to travelling waves of~\eqref{e:cylinder}. Each of the 68 nonhomogeneous equilibria is the $\alpha$ or $\omega$ limit set of a connecting orbit.
\end{thm}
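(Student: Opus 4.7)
The plan is to mirror the proof of Theorem~\ref{t:CR}, adapted to the two-dimensional travelling wave setting. First, deform the nonlinearity from $\lambda = (12, 12)$ to the linear base point $\widetilde{\psi}(u) = -u$ (i.e., $\lambda = (-1, 0)$) via a piecewise linear path in parameter space through $\lambda = (-1, 12)$, staying within the class of coercive nonlinearities so that the compactness provided by Proposition~\ref{compact1} (suitably adapted to~\eqref{e:TW}) secures the Floer continuation. At the base point the unique hyperbolic equilibrium $u \equiv 0$ fixes the reference linear operator, and the resulting Betti numbers of $\HF_k(\mathcal{S}^\infty, \widetilde{\psi})$ serve as the invariant against which the relative-index data at $\lambda = (12, 12)$ must be reconciled.

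Next, the three homogeneous equilibria at $\lambda = (12, 12)$ are $u \equiv 0$ and $u \equiv \pm 1$, the real roots of $\lambda_1 u - \lambda_2 u^3 = 0$. The Neumann elliptic Hessian $-\Delta - \psi_\lambda'(u_*)$ on $[0,\pi]^2$ diagonalizes in the basis $\{\cos(k_1 x_1) \cos(k_2 x_2)\}$ with eigenvalues $k_1^2 + k_2^2 - \psi_\lambda'(u_*)$. At $u \equiv \pm 1$ one has $\psi_\lambda'(\pm 1) = -24$, so the Hessian is positive definite and the spectral flow from the base operator delivers relative index $0$ for each. At $u \equiv 0$ one has $\psi_\lambda'(0) = 12$, and counting pairs of nonnegative integers $(k_1, k_2)$ with $k_1^2 + k_2^2 < 12$ produces the value $13$, consistent with the single equilibrium of relative index $13$ recorded in the statement.

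The remaining $68$ nonhomogeneous equilibria, together with their relative indices, are produced by the computer-assisted machinery developed in Sections~\ref{sec:rig_comp_critical points} and~\ref{s:setuphomotopy}: each equilibrium is rigorously enclosed via a Fourier-Galerkin scheme combined with a Newton-Kantorovich type fixed-point argument, and its relative index is obtained by tracking the spectral flow of the Hessian along the piecewise linear homotopy from the base point. With $\zeta_k$ set equal to the multiplicities listed in the statement, the forcing Lemma~\ref{l:forcing} yields the lower bound of $35$ connecting orbits and, as in Theorem~\ref{t:CR}, guarantees that each nonhomogeneous equilibrium belongs to the $\alpha$- or $\omega$-limit set of at least one connecting orbit. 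That the count is independent of $c > 0$ follows from~\eqref{eq:functional_TW} being a Lyapunov function for every $c > 0$, so that the Floer homology and its forcing consequences do not depend on the wave speed.

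The main obstacle is the size of the two-dimensional enclosure. Rigorously validating $68$ distinct equilibria demands a Fourier truncation substantially larger than in the one-dimensional examples, which stresses both the Newton-Kantorovich radius estimates and the cost of interval-arithmetic linear algebra on the associated finite sections. Tracking the spectral flow is likewise more delicate, since the Hessian of~\eqref{eq:functional_TW} is strongly indefinite --- the $-\tfrac{1}{2}(u_t)^2$ term contributes infinitely many eigenvalues of each sign --- so the relative index must be extracted by following only the finitely many eigenvalues that cross zero along the homotopy, without missing crossings in narrow parameter windows. The detailed compactness argument along the piecewise linear homotopy, together with the spectral-flow bookkeeping, complete the proof.
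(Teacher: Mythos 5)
Your overall architecture (continuation of the nonlinearity to the linear base point, computer-assisted enclosure of equilibria, spectral flow for the indices, then Lemma~\ref{l:forcing} with $\beta_0=1$, $\beta_k=0$ otherwise, giving $\tfrac12\sum_k(\zeta_k-\beta_k)_+=35$) matches the paper, and your counts for the homogeneous states ($\psi'_\lambda(\pm1)=-24$ giving index $0$; thirteen lattice points with $k_1^2+k_2^2<12$ giving index $13$) are correct. The gap is in how you propose to obtain the relative indices for problem~\eqref{e:TW}. The paper does \emph{not} track the spectral flow of the strongly indefinite Hessian of $\A_{\text{TW}}$: Section~\ref{s:TWspectralflow} first proves that the eigenvalues $\rho$ of the linearized travelling-wave system~\eqref{e:TWsystem} and the eigenvalues $\sigma$ of the linearized parabolic problem~\eqref{e:parabolic} satisfy $\sigma=c\rho-\rho^2$ and cross zero simultaneously and in the same direction, so the relative index for~\eqref{e:TW} equals the relative (Morse-type) index for the scalar, self-adjoint parabolic problem. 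This reduction is precisely what justifies the elliptic-Hessian computation you carry out for $u\equiv 0,\pm1$ --- within your own framework that step is unjustified, since you simultaneously insist the index must be read off the indefinite Hessian of~\eqref{eq:functional_TW} --- and it is also what makes the statement manifestly independent of $c$, rather than the Lyapunov-function remark you offer.

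Moreover, your computational plan for the $68$ nonhomogeneous equilibria (``tracking the spectral flow of the Hessian along the piecewise linear homotopy from the base point, without missing crossings'') is not what is done and would be genuinely problematic: the linearization of~\eqref{e:TWsystem} is not self-adjoint for $c>0$ (its spectrum lies on $\{\mathrm{Im}\,z=0\}\cup\{\mathrm{Re}\,z=c/2\}$), and rigorously certifying every eigenvalue crossing along a whole operator path is far harder than what the paper actually needs. After the reduction to~\eqref{e:parabolic}, the paper computes each index by the mechanism of Sections~\ref{sec:rig_comp_spectrum}--\ref{sec:rig_comp_relative_indices}: Theorem~\ref{thm:homotopy_dagA_DF} shows the spectral flow from $DF(\ta)$ to the numerically diagonal comparison operator $\dagA_{\ba}$ vanishes (a consequence of the same bounds $Y_0,Z_0,Z_1,Z_2$ used in the existence proof), so the relative index of two equilibria reduces via~\eqref{e:specflowdagger} and~\eqref{e:counteigenvalues} to comparing the numbers of positive eigenvalues of two finite interval matrices --- no eigenvalue tracking along a homotopy in $\lambda$ is ever performed. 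Without the spectral correspondence of Section~\ref{s:TWspectralflow} and this finite-dimensional reduction, your proposal leaves the central index-computation step of Theorem~\ref{t:TW} unproved.
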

The nonhomogeneous equilibria are depicted in Figure~\ref{f:TW}. The problem allows a symmetry group of order $16$, generated by the operations 
\[ 
   x_1 \mapsto \pi-x_1 \qquad  x_2 \mapsto \pi-x_2
   \qquad (x_1,x_2) \mapsto (x_2,x_1) \qquad  u \mapsto -u.
\]
For each equilibrium represented in Figure~\ref{f:TW} there are additional ones generated by these operations (the orbit under the action of the symmetry group). The number of such symmetry-related equilibria is indicated in Figure~\ref{f:TW}. We note that the symmetry slightly reduces the computational cost. We regard this aspect as somewhat immaterial, as the computer-assisted proofs of the equilibria generalize to rectangles in a straighforward manner. The proof of Theorem~\ref{t:TW} is essentially the same as the one of Theorem~\ref{t:CR}. Indeed, once the Floer homology construction has been justified, the forcing lemma~\ref{l:forcing} again applies, with relatively minor  differences in the estimates and computational details for the equilibria, which now depend on two spatial variables. The construction of Floer homology theory in this case is less classical, but has been accomplished in~\cite{BakkervdBergvdVorst}. The result in Theorem~\ref{t:TW} complements the ones obtained in~\cite{FSV}, where a result similar to Lemma~\ref{l:forcing} is proven using the Conley index, but without the information on the existence of equilibria provided by our computer-assisted approach.

For the problem~\eqref{e:OK} choosing a base point is not an issue. Since the problem is not ill-posed, one may just use the classical Morse index, and Floer thoery is replaced by infinite dimensional Morse theory, which is standard in the context of parabolic PDEs of gradient type, cf.~\cite{Weber1,Weber2}.
\begin{thm}\label{t:OK}
	For $\lambda_1=\lambda_2=9$ and $\lambda_3=4.5$ the Ohta-Kawasaki problem~\eqref{e:OK} has at least $9$ equilibrium solutions with Morse indices $0$ (4$\times$), $1$ (4$\times$) and $2$ (1$\times$). Moreover, there are at least 4 connecting orbits, each having nontrivial spatial dependence.
\end{thm}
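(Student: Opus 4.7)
The approach mirrors the proofs of Theorems~\ref{t:CR} and~\ref{t:TW}: first determine the Morse-Floer homology by continuing to a linear base problem, then rigorously enclose the nine equilibria together with their Morse indices by computer-assisted proof, and finally apply the forcing Lemma~\ref{l:forcing}. Because~\eqref{e:OK} generates a well-posed semi-flow with a strongly damping leading term $-\partial_x^4$, the classical Morse index agrees with the relative index with respect to the linearization at $u\equiv 0$ for $\widetilde{\psi}(u)=-u$ and any fixed $\lambda_3^*\geq 0$; at that base point the cosine-Fourier eigenvalues $-k^4-k^2-\lambda_3^*$, $k\geq 1$, are all negative, so $u\equiv 0$ is the unique hyperbolic equilibrium and the homology satisfies $\beta_0=1$ and $\beta_k=0$ for $k\neq 0$. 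A convenient homotopy from $(\lambda_1,\lambda_2,\lambda_3)=(9,9,4.5)$ to this base point is piecewise linear through the intermediate point $(-1,9,4.5)$: on the first segment $\lambda_1$ is interpolated while $\lambda_2=9$ keeps the cubic term dominant and coercive, and on the second $\lambda_2$ is turned off while $\lambda_1=-1$ keeps the linear part contractive. The a priori $H^1$ bound needed for compactness of the invariant set along the path follows from using $\A_{\text{OK}}$ as a Lyapunov function and is simpler than the analogue of Proposition~\ref{compact1} used for the ill-posed problems, thanks to the genuine semi-flow structure.

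For the second step, eight of the nine equilibria are spatially nonconstant and are enclosed by the Newton--Kantorovich-type contraction of Section~\ref{sec:rig_comp_critical points} applied to the Fourier-cosine expansion with the mean component fixed to zero. Their Morse indices are then computed rigorously by the spectral flow framework of Section~\ref{s:setuphomotopy}, applied to a parameter interval whose endpoint spectra are well separated from zero. The ninth equilibrium is the trivial one $u\equiv 0$, which the zero-mass constraint forces to be the only constant in the phase space; its Morse index is obtained analytically from the linearization $-\partial_x^4-\lambda_1\partial_x^2-\lambda_3$, whose eigenvalues on $\cos(kx)$, $k\geq 1$, are $-k^4+\lambda_1 k^2-\lambda_3$, which at $\lambda_1=9$, $\lambda_3=4.5$ are positive exactly for $k=1,2$ and negative for $k\geq 3$, giving Morse index $2$.

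With $\zeta_0=4$, $\zeta_1=4$, $\zeta_2=1$ and the Betti numbers above, the Morse complex has a boundary operator of rank $3$ in degree $1$ and rank $1$ in degree $2$, so Lemma~\ref{l:forcing} yields at least $3+1=4$ connecting orbits. Nontrivial spatial dependence of each orbit is automatic: since $u\equiv 0$ is the only spatially homogeneous member of the phase space and it carries index $2$, every equilibrium of index $0$ or $1$ is nonconstant in $x$, so the three connections joining index~$1$ and index~$0$ equilibria run between nonhomogeneous states, while the connection from the spatially trivial $u\equiv 0$ to its nonhomogeneous index-$1$ $\omega$-limit must acquire nontrivial spatial dependence. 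The step I expect to be most delicate is the rigorous Morse-index computation for the nonhomogeneous equilibria: within the $H^{-1}$ gradient structure one must correctly identify the selfadjoint operator that governs the spectral flow and then control the high-Fourier tail of its spectrum tightly enough that no eigenvalue can be miscounted near zero.
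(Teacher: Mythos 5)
Your proposal follows essentially the same route as the paper: rigorous enclosure of the equilibria via the Newton--Kantorovich/radii-polynomial method in the cosine-Fourier setting of Section~\ref{s:OK}, index computation via the spectral-flow framework of Section~\ref{s:setuphomotopy} (with the trivial state's index $2$ read off from $-k^4+\lambda_1 k^2-\lambda_3$), continuation to the linear base problem giving $\beta_0=1$ and $\beta_k=0$ otherwise, and finally Lemma~\ref{l:forcing}. One small remark: the count of four orbits should be taken directly from the lemma's bound $\tfrac12\sum_k(\zeta_k-\beta_k)_+=\tfrac12(3+4+1)=4$ rather than from ranks of boundary operators (which would presuppose that the nine computed equilibria exhaust the critical set), while your observation that the zero-mass constraint forces every connecting orbit to be spatially nontrivial is correct and matches the theorem.
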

The nontrivial equilibria are depicted in Figure~\ref{f:OK}.
The proof is analogous to those discussed above, with some computational details for this particular problem provided in Section~\ref{s:OK}.
This complements results from \cite{MR2136516}, where constructive computer-assisted proofs of existence of connecting orbits in Ohta-Kawasaki are obtained. 

\begin{figure}[t]
\centerline{
\includegraphics[width=0.4\textwidth]{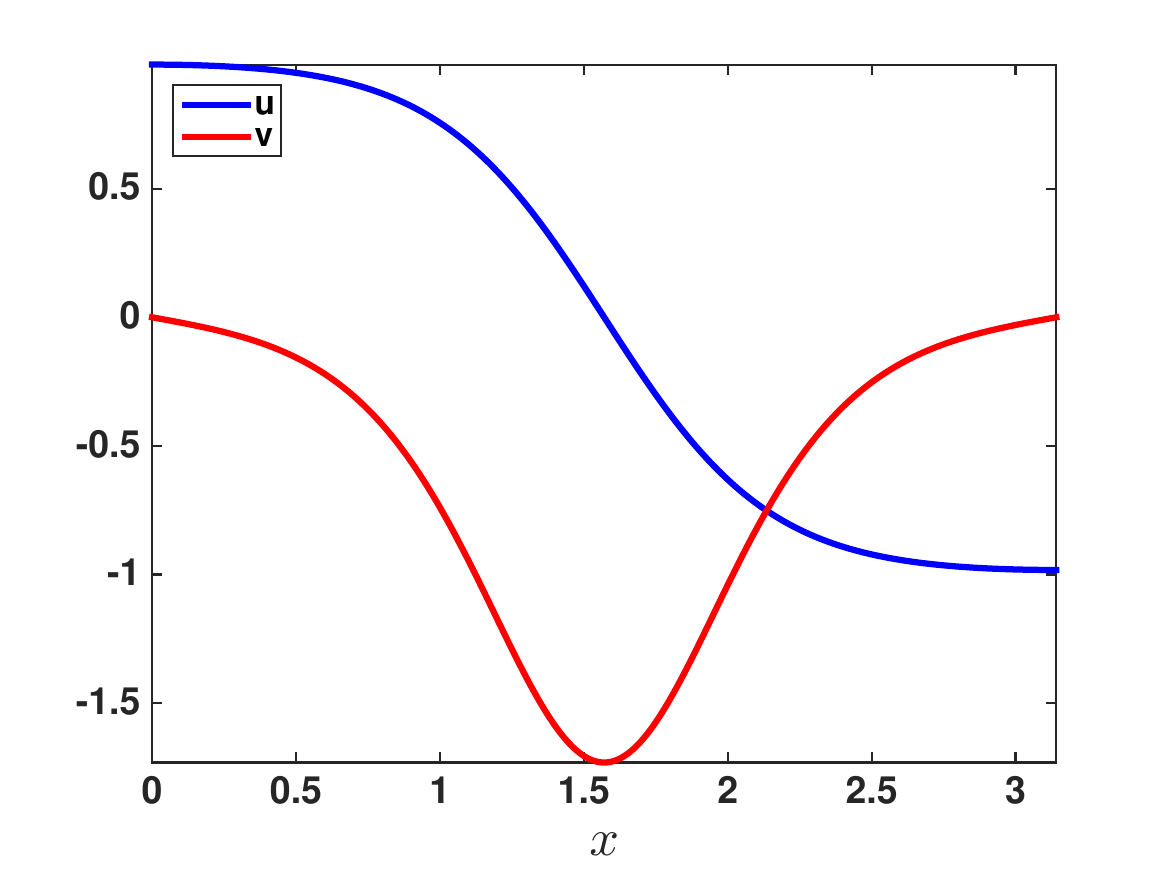} ~~
\includegraphics[width=0.4\textwidth]{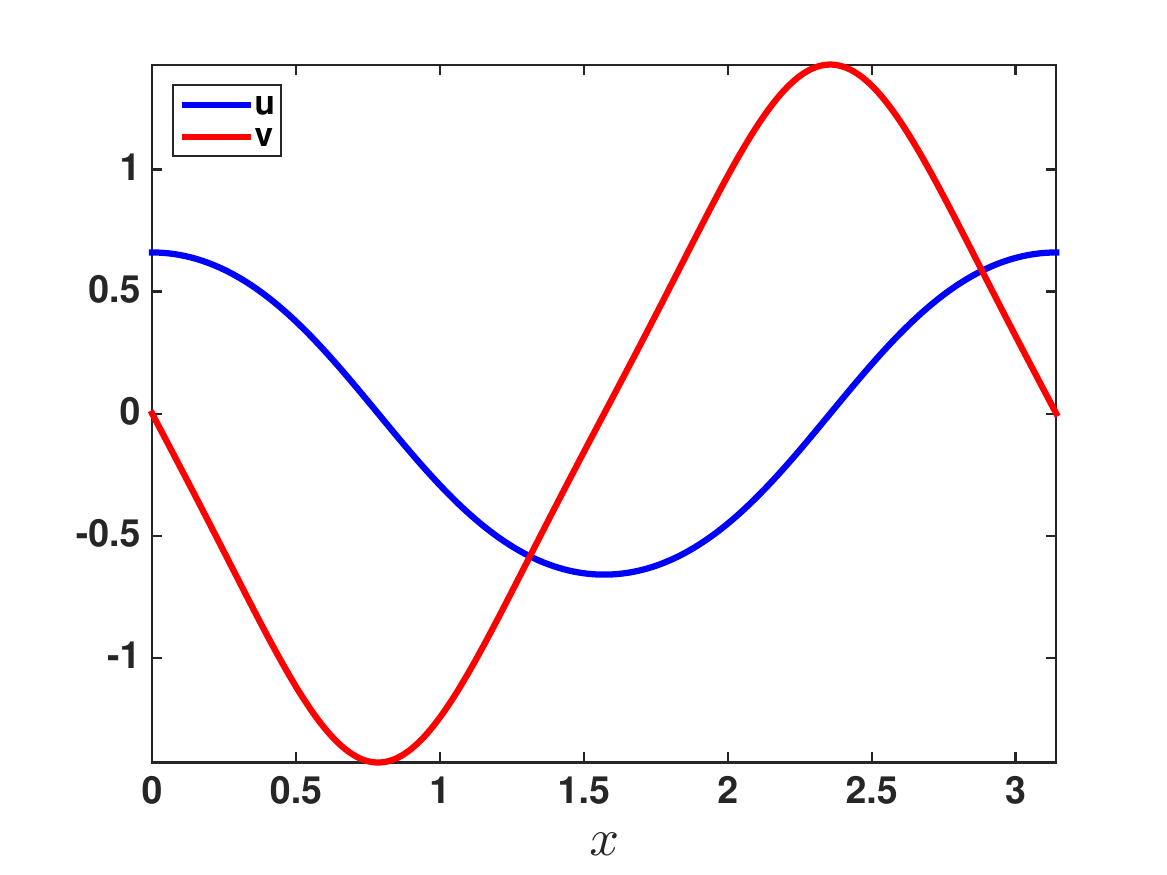}
}
\caption{Equilibrium solutions of~\eqref{e:CR} for $\lambda_1=\lambda_2=6$. The error in the plots in the $C^0$ norm is no more than $5 \cdot 10^{-11}$. To each equilibrium $(u(x),v(x))$ corresponds another equilibrium $(-u(x),-v(x))$ with the same relative index. Moreover, the homogeneous states $(-1,0)$, $(0,0)$ and $(1,0)$ are also equilibria. The states $(\pm 1,0)$ have relative index $0$, the equilibrium on the left has index $1$, the one on the right has index $2$, and the state $(0,0)$ has relative index $3$.}
\label{f:CR}
\end{figure}
\begin{figure}[p]
\vspace*{-17.1pt}
\centerline{\includegraphics[width=0.33\textwidth]{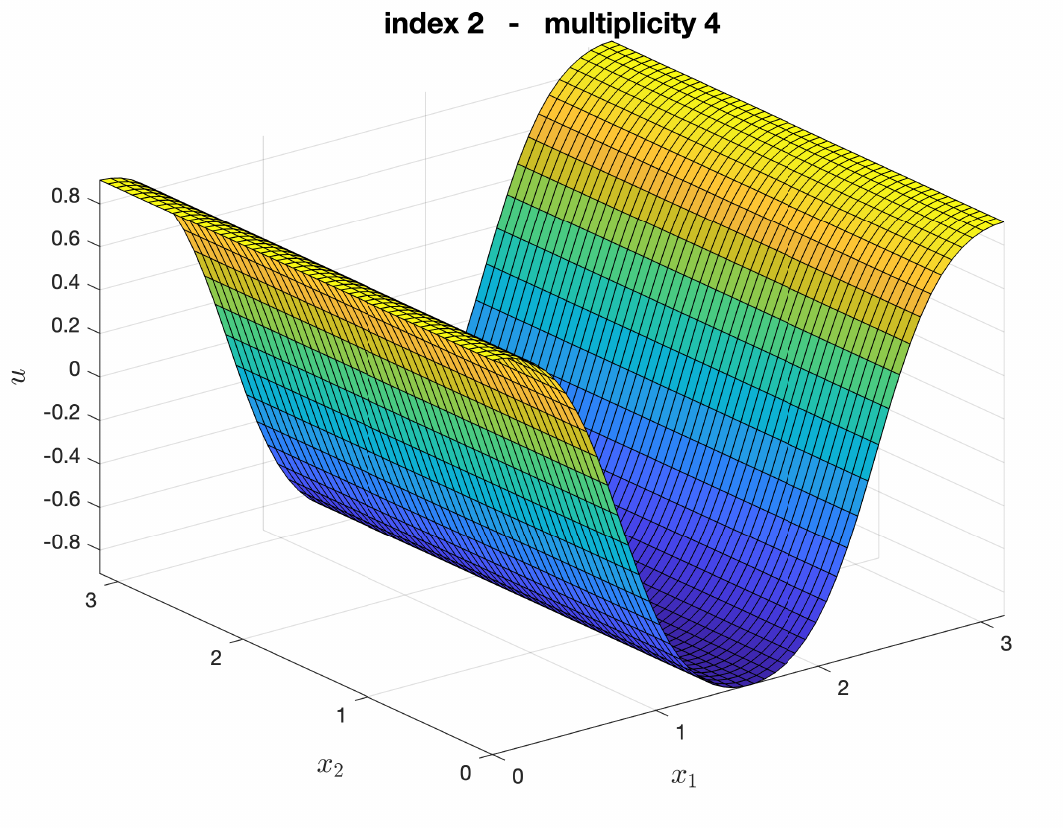}\includegraphics[width=0.33\textwidth]{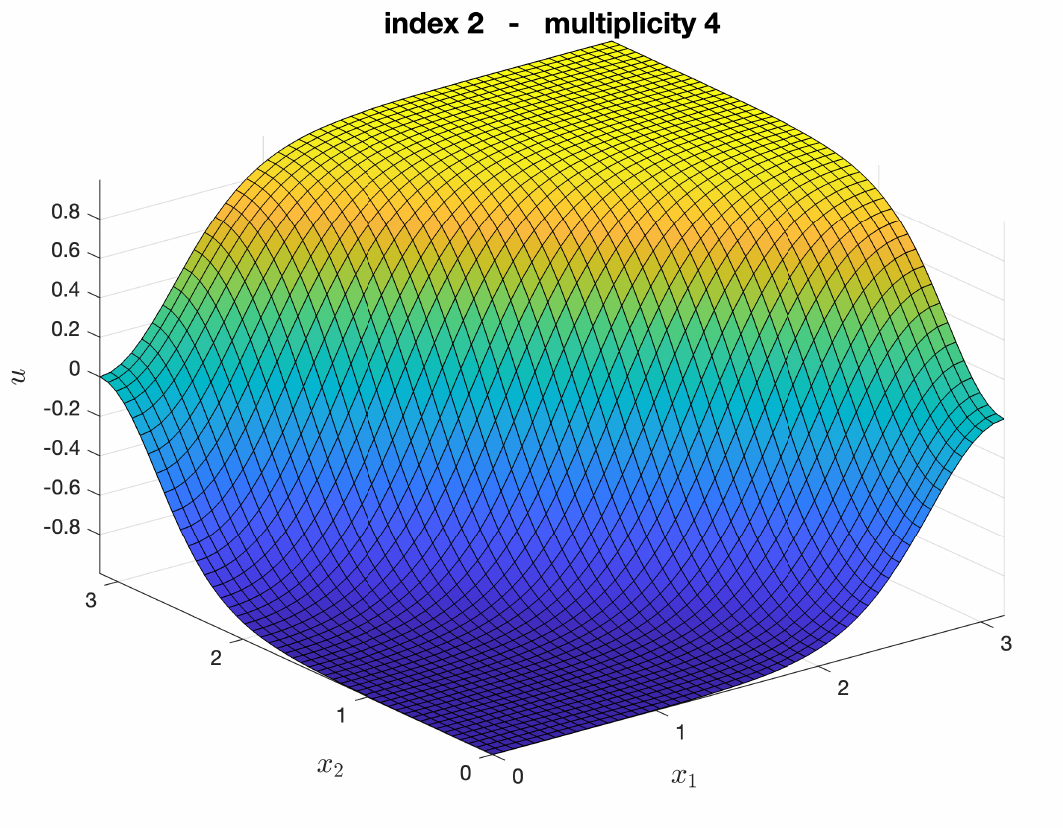}\includegraphics[width=0.33\textwidth]{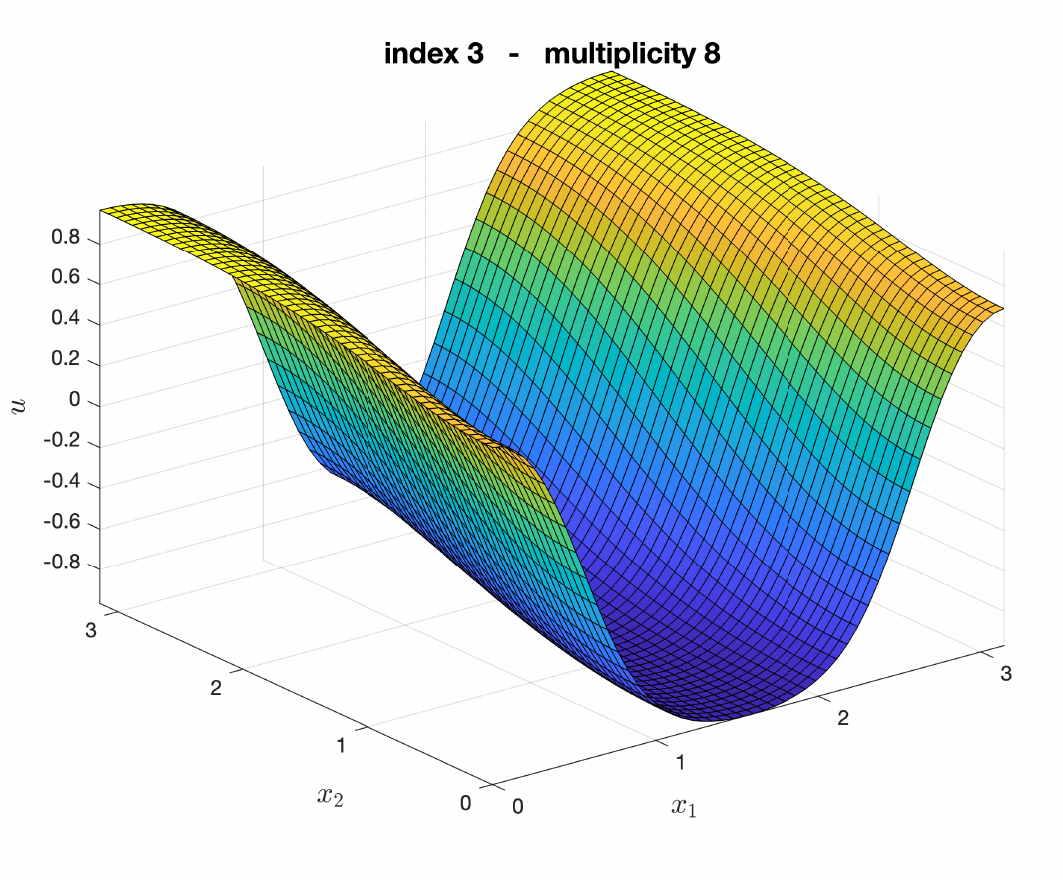}}
\centerline{\includegraphics[width=0.33\textwidth]{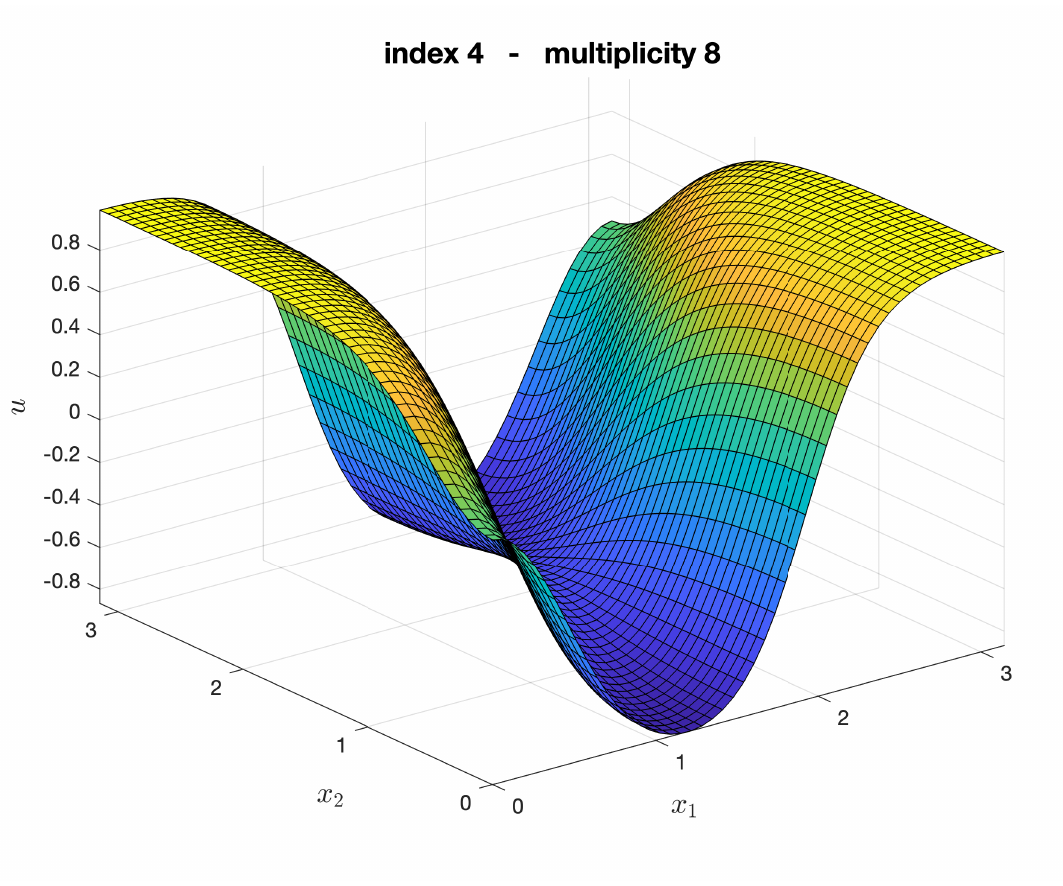}\includegraphics[width=0.33\textwidth]{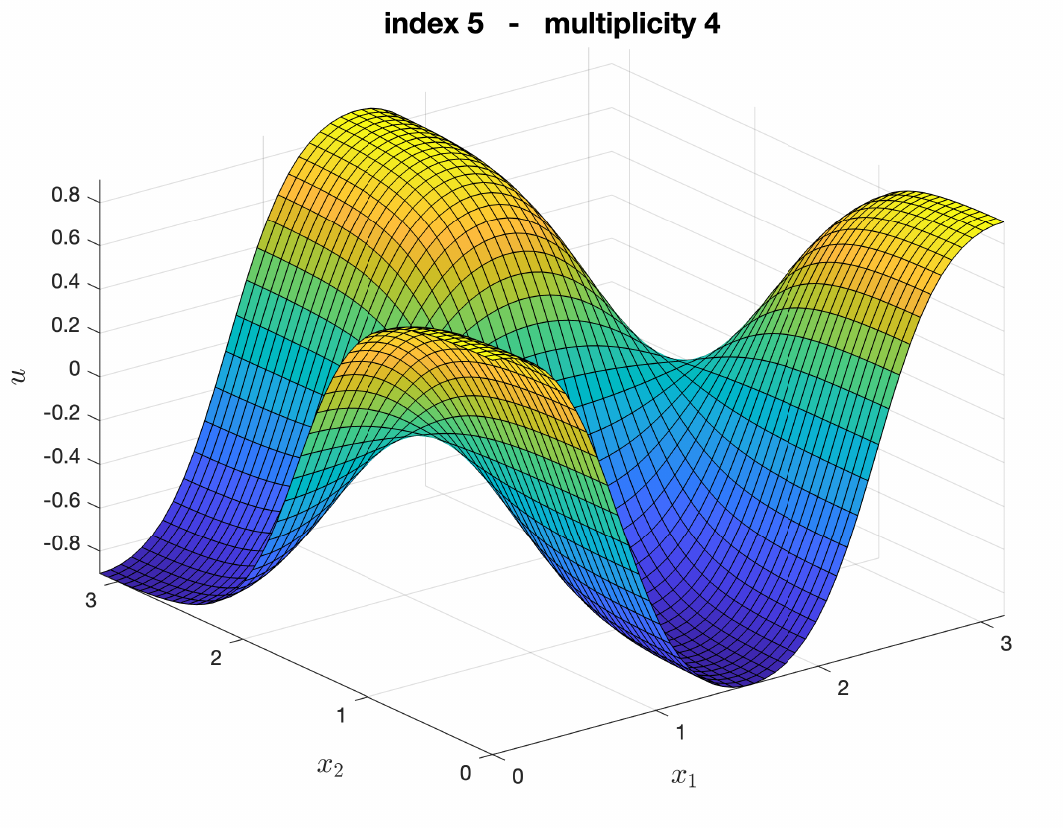}\includegraphics[width=0.33\textwidth]{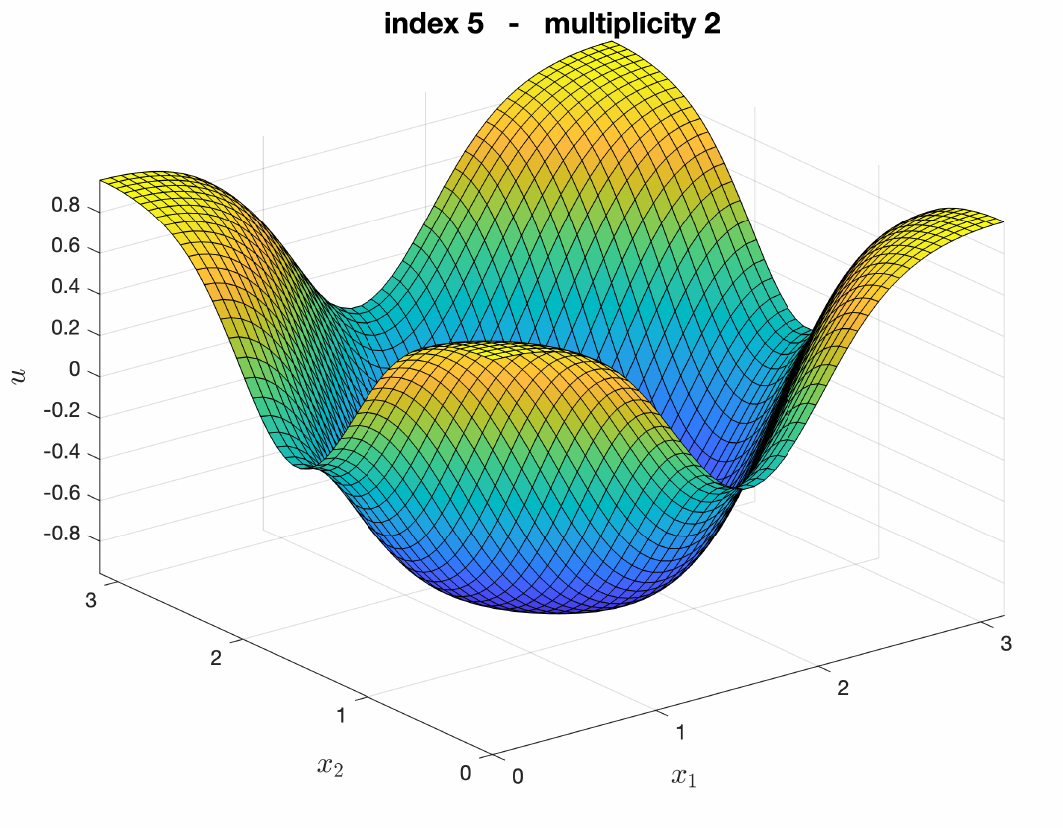}}
\centerline{\includegraphics[width=0.33\textwidth]{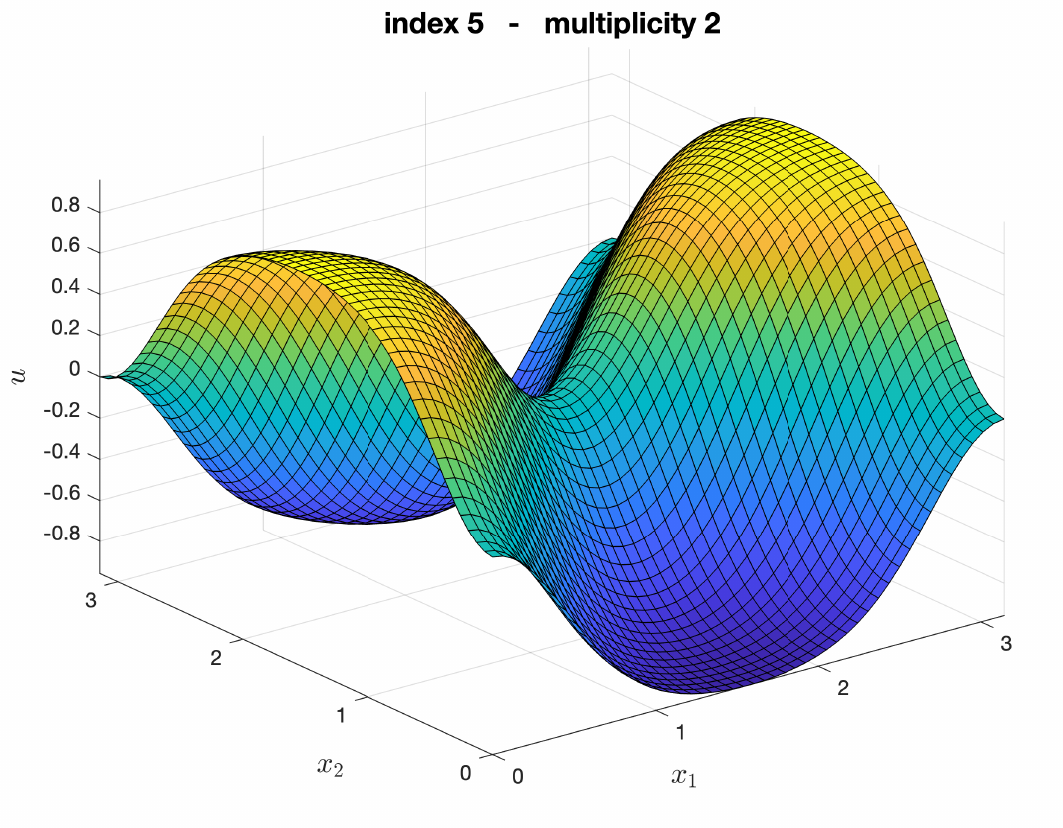}\includegraphics[width=0.33\textwidth]{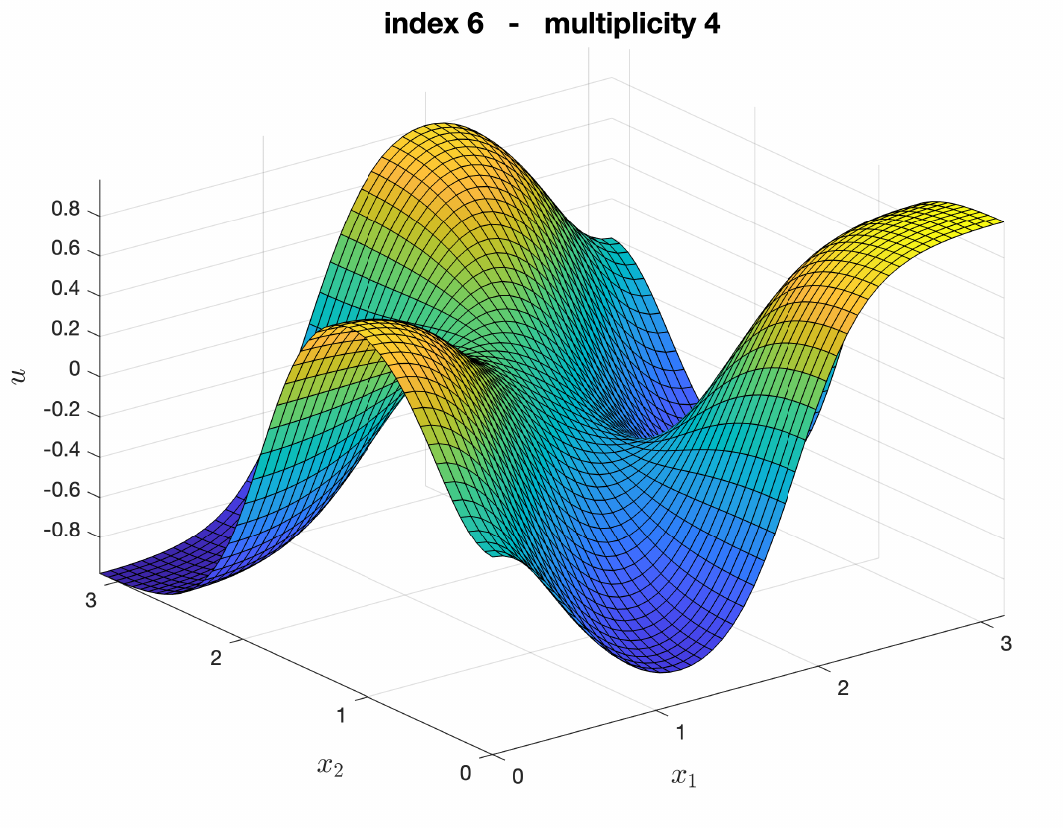}\includegraphics[width=0.33\textwidth]{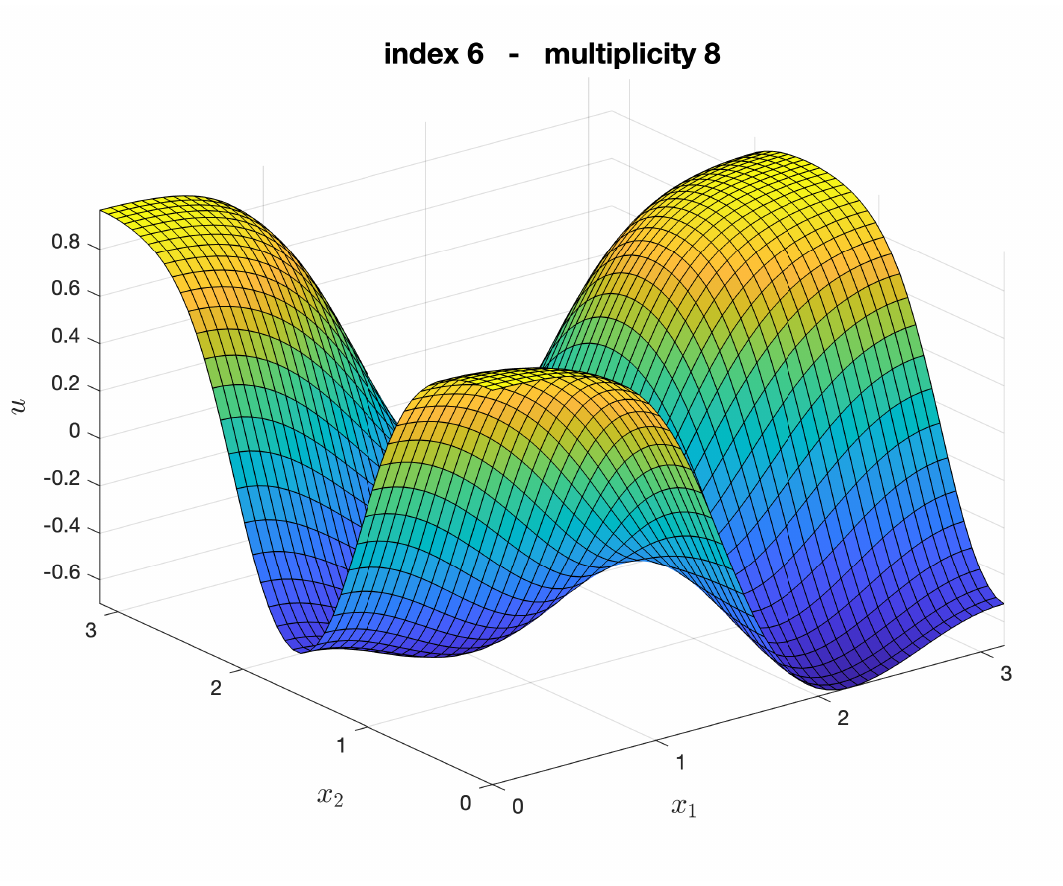}}
\centerline{\includegraphics[width=0.33\textwidth]{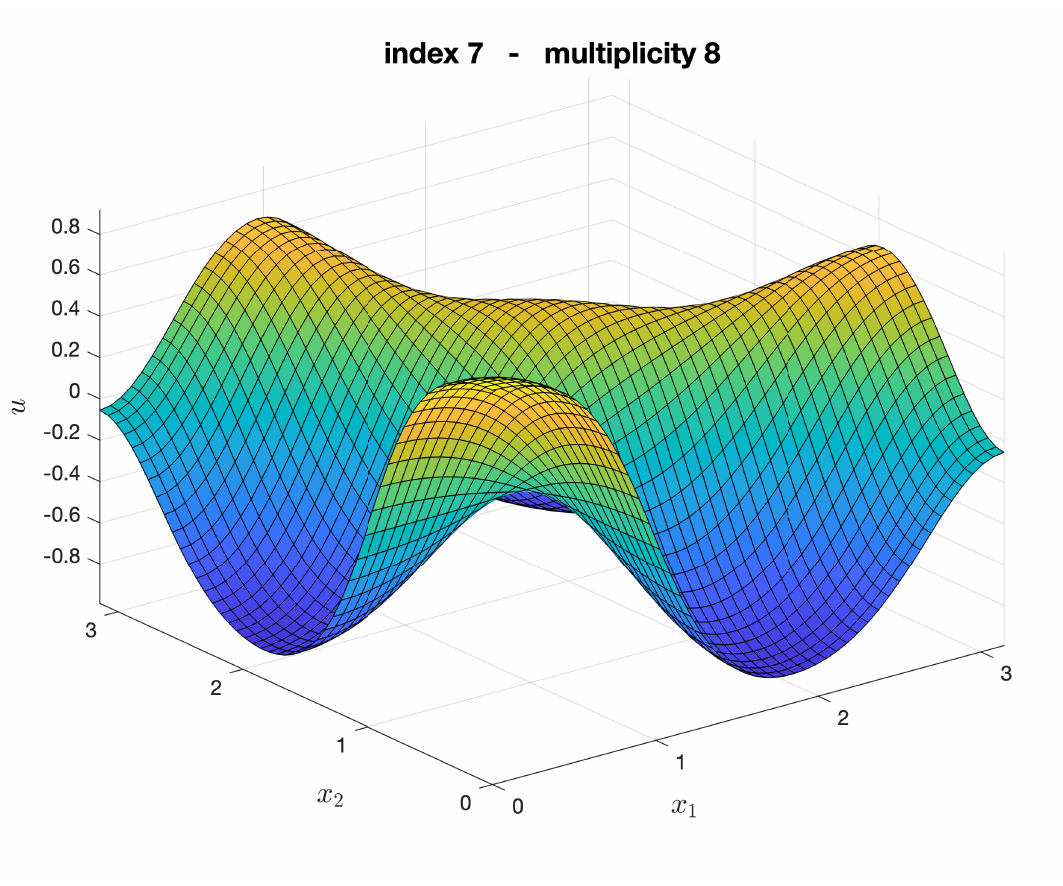}\includegraphics[width=0.33\textwidth]{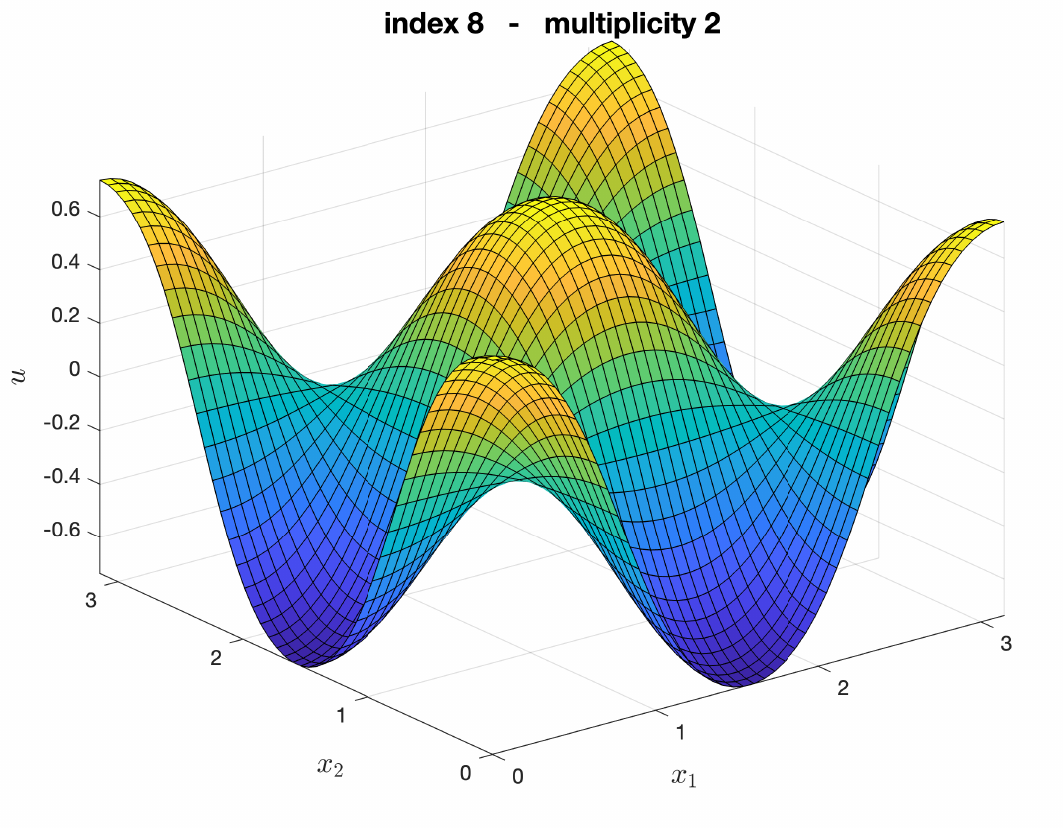}\includegraphics[width=0.33\textwidth]{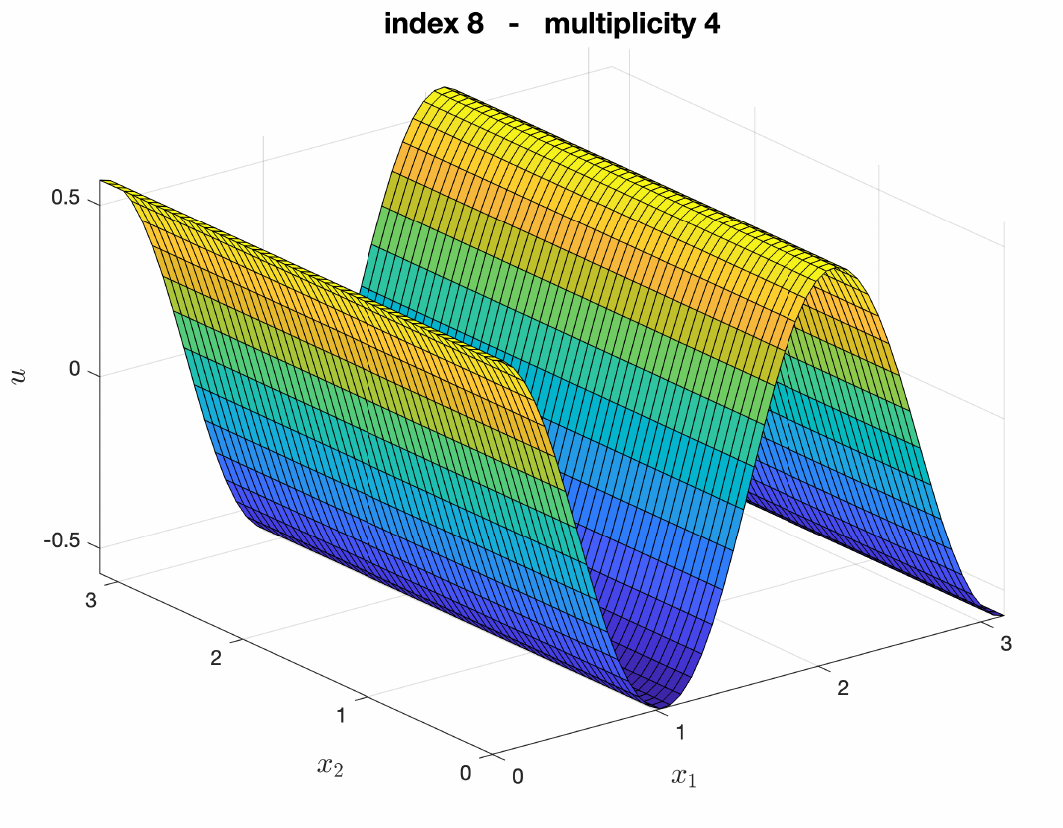}}
\centerline{\includegraphics[width=0.33\textwidth]{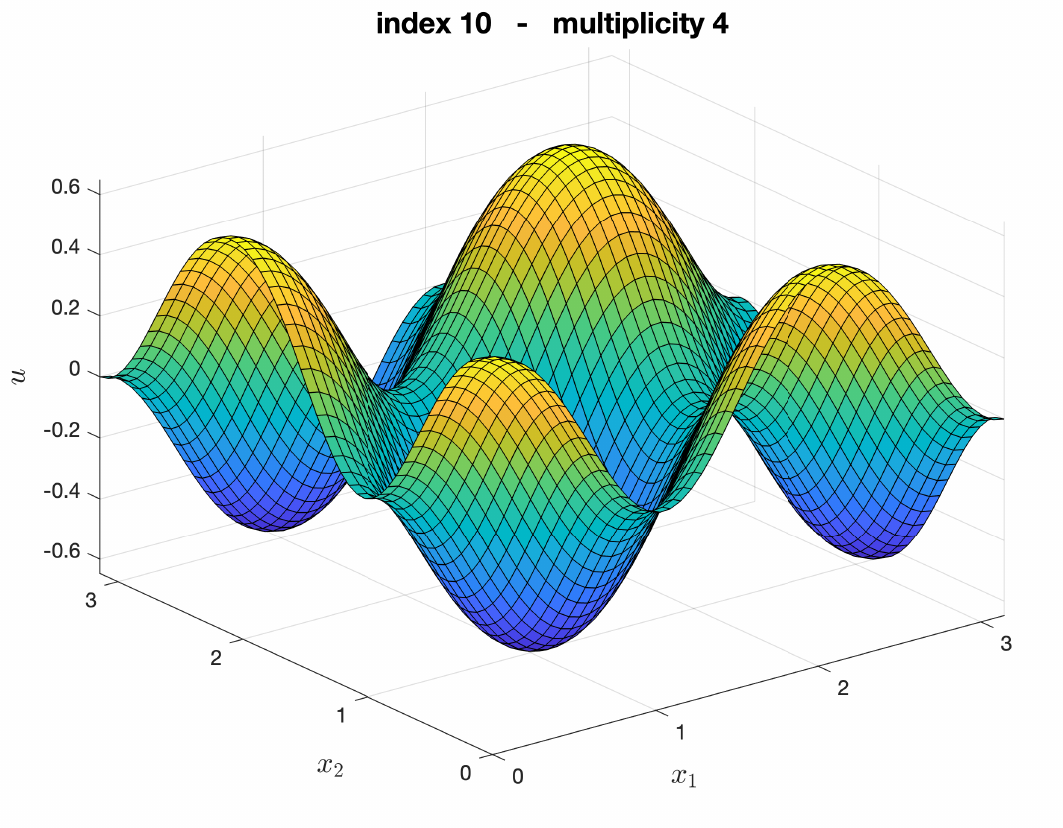}\includegraphics[width=0.33\textwidth]{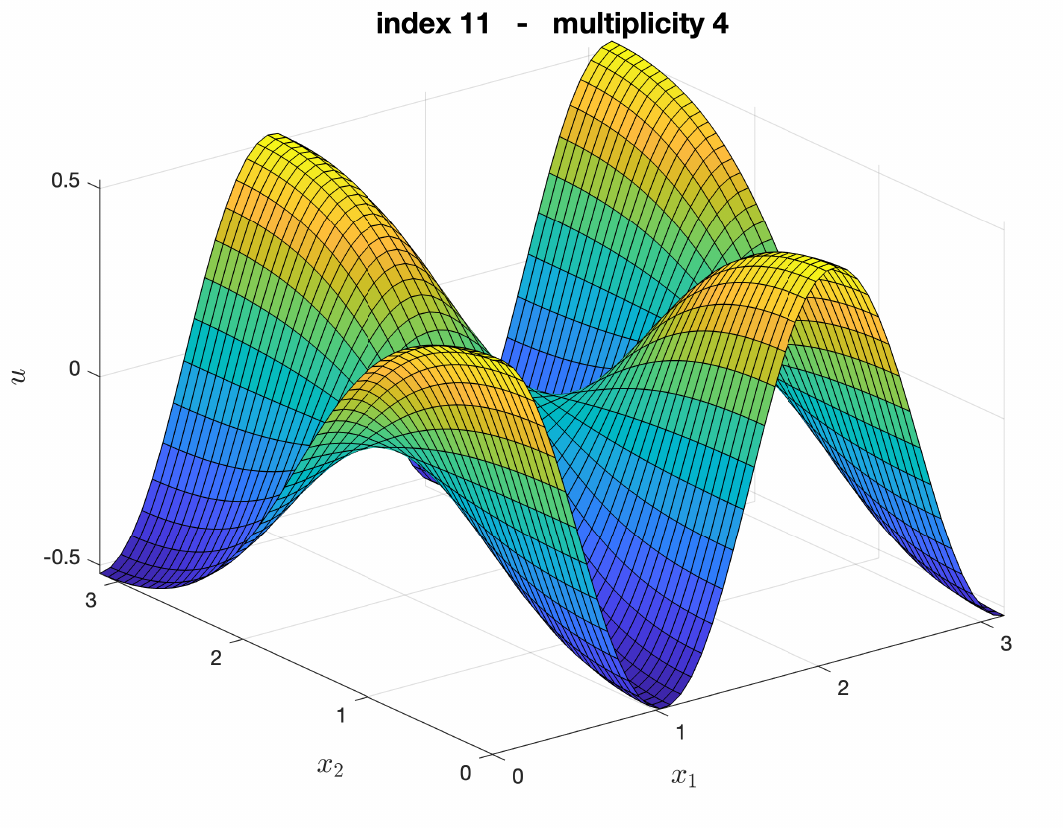}\includegraphics[width=0.33\textwidth]{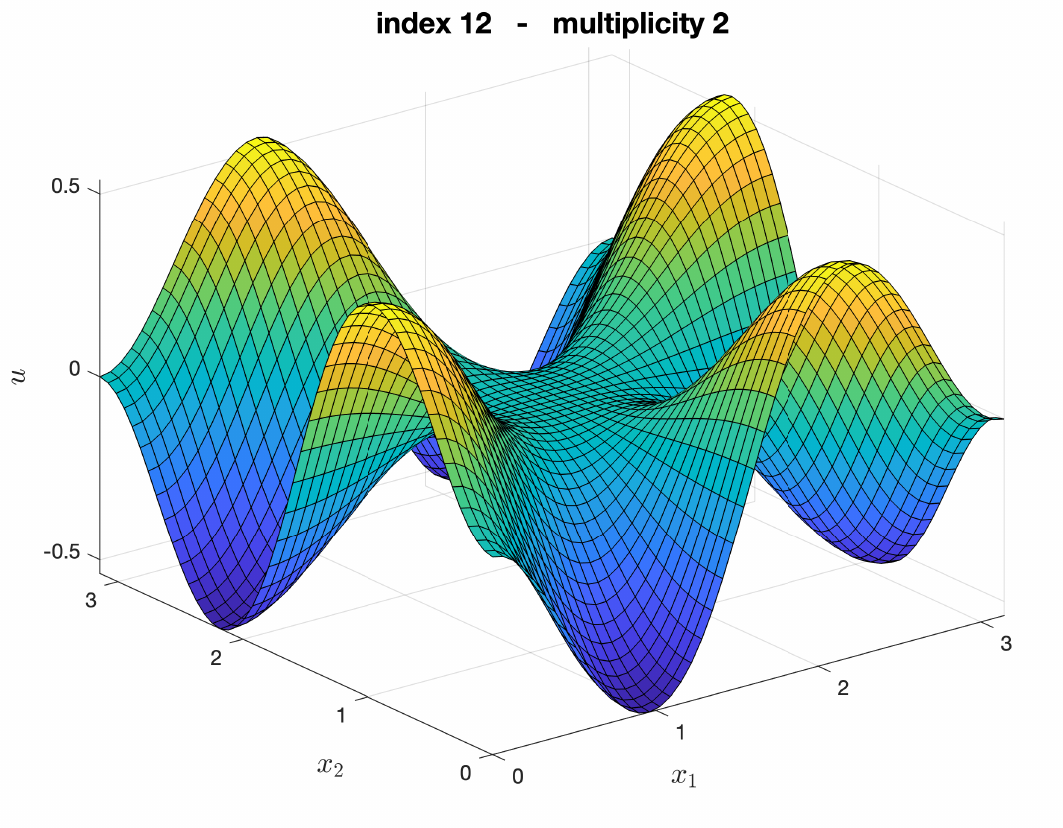}}
\caption{Nonhomogenous equilibrium solutions of~\eqref{e:TW} for $\lambda_1=\lambda_2=12$. The relative index is indicated above each graph. The error in the plots in the $C^0$ norm is less than $3\cdot10^{-5}$. Additionally, the homogenous solutions $u \equiv \pm 1$ and $u \equiv 0$ have indices 0 and 13, respectively. The multiplicity mentioned above each graph is the number of symmetry-related equilibria, as explained in the main text.}
\label{f:TW}
\end{figure}

\begin{figure}[t]
\centerline{
\includegraphics[width=0.4\textwidth]{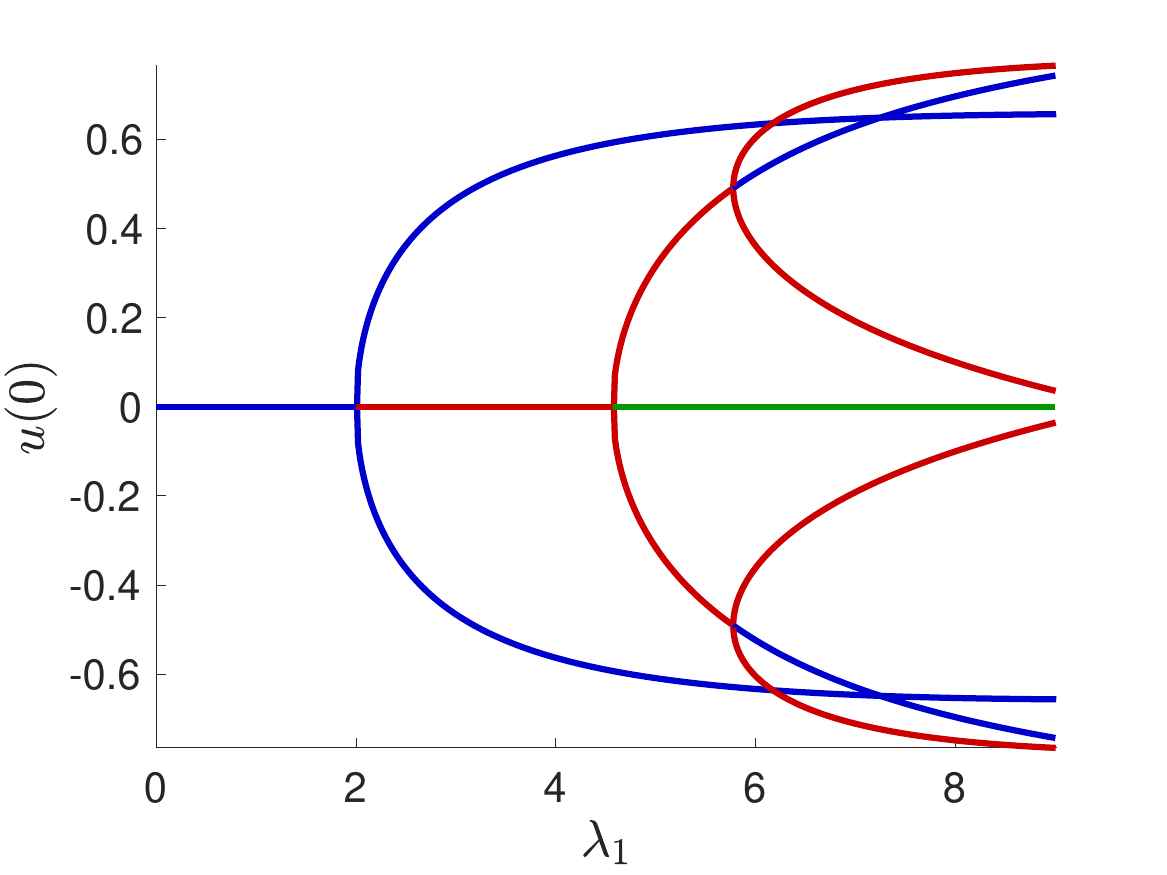}
\includegraphics[width=0.4\textwidth]{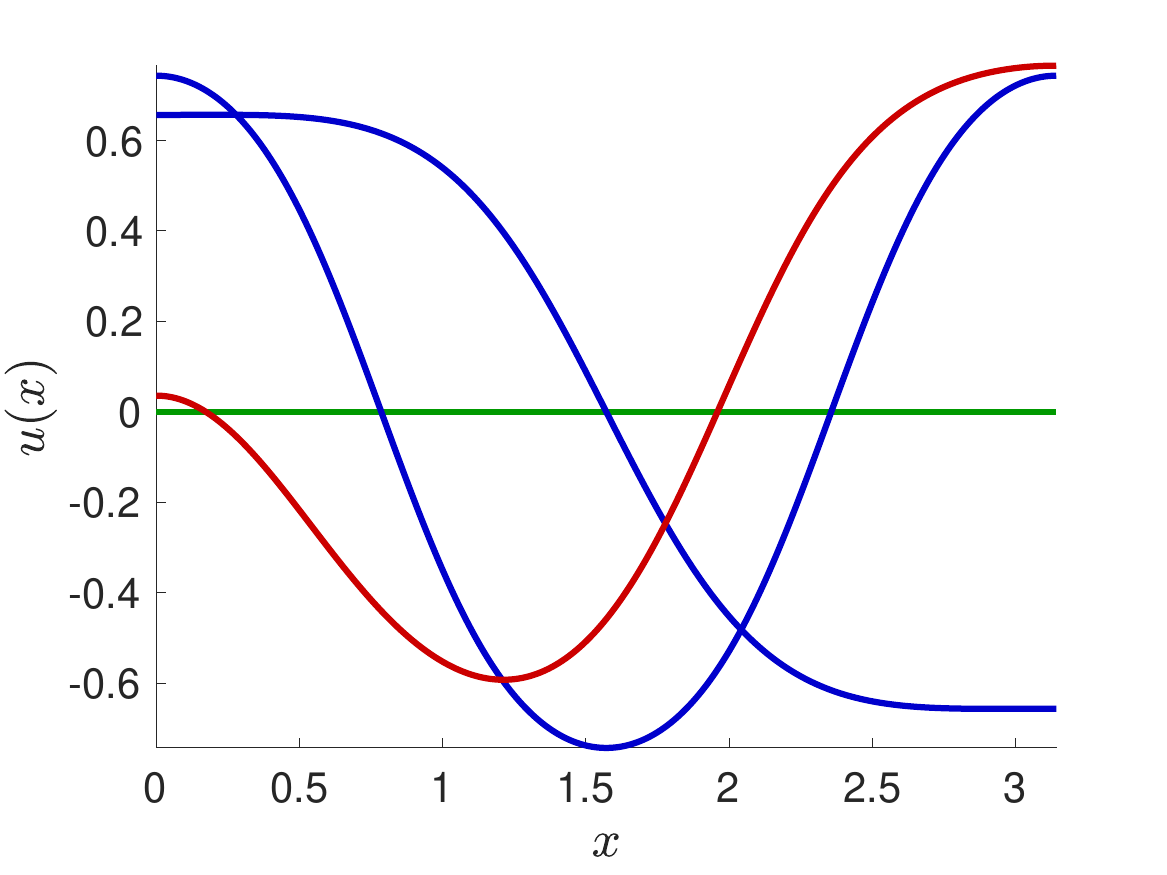}
}
\caption{(Left) Bifurcation diagram of equilibria of \eqref{e:OK} when $\lambda_1=\lambda_2$ varies over the interval $[0,9]$ with Morse indices $0$ (blue), $1$ (red) and $2$ (green). (Right) Equilibrium solutions of~\eqref{e:OK} for $\lambda_1=\lambda_2=9$ and $\lambda_3=4.5$. The trivial solution (in green) has index $2$. The equilibria in blue have index $0$ while the one in red has index $1$. To each blue equilibrium solution $u(x)$ corresponds a solution $-u(x)$ having the same index. Moreover, to the red solution $u(x)$ corresponds the three other equilibria $-u(x)$ and $\pm u(\pi-x)$. The error in the plots in the $C^0$ norm is no more than $6 \cdot 10^{-14}$.}
\label{f:OK}
\end{figure}

\subsection*{Outline of the paper}

The outline of this papers is as follows.
In Section~\ref{s:theory} we give a concise outline of the construction  of Morse-Conley-Floer homology, and we discuss the forcing relation in Morse-Conley-Floer theory between critical points (with their indices) and connecting orbits.
In Section~\ref{s:setuphomotopy} we introduce the computational setup for computing-proving the equilibria and their (relative, Morse) indices, as well as the spectral-flow and homotopy arguments that turn the computational results into rigorous ones. 
We use that in a Fourier series setting, which the three problems~\eqref{e:CR}, \eqref{e:TW} and~\eqref{e:OK} all fit in, the spectral flow properties that we need are particularly convenient from a computational point of view. This is due to the lack of explicit boundary conditions (which are absorbed in the Banach spaces we choose to work in) and the fact that the dominant differential operators are diagonal in the Fourier basis.
In Sections~\ref{s:CR},~\ref{s:TW} and~\ref{s:OK} we provide computational details for each of the example problems~\eqref{e:CR}, \eqref{e:TW} and~\eqref{e:OK}, respectively. Since the computational particulars are not the core of the present paper, and many of the estimates are available elsewhere, we keep those sections brief, using citations to the literature where appropriate. All computer-assisted parts of the proofs are performed with code available at~\cite{codes_webpage}.


\section{Morse-Conley-Floer homology} \label{s:theory}

In order to establish connecting orbits in various classes of partial differential equations, including strongly indefinite ones, we want to use a topological-algebraic invariant.
Since the systems under study are either gradient or gradient-like systems, a natural choice is to use an intrinsically  defined invariant such as a Morse homology or Floer homology.
For definiteness, to define an appropriate index theory we focus on the Cauchy-Riemann equations in~$\R^2$. We emphasize that the same methods apply more generally, in particular to the other problems introduced in Section~\ref{s:intro}.

Consider equations of the form
\begin{equation}\label{eq:CR_BVP1}
\left\{
\begin{aligned}
u_t  &= v_x + \psi_\lambda(u) \\
v_t  &= -u_x + v, \\
\end{aligned}
\right.
\end{equation}
where $z=(u,v)\colon \R\times [0,\pi] \to \R^2$, with the boundary conditions $u_x(0)=u_x(\pi)=0$ and
$v(0)=v(\pi)=0$.
The above equations are the negative $L^2$-gradient flow of the functional 
\begin{equation*}
\A^\epsilon_{\text{CR}}(z) \bydef \int_0^{\pi} \Bigl[ v u_x - \frac{1}{2} v^2 - \Psi_\lambda(u)-\epsilon h(x,u,v) \Bigr] dx,
\end{equation*}
when $\epsilon=0$.
The values $\epsilon>0$ are referred to as the perturbed problem.
For simplicity, suppose that $\Psi_\lambda$ is a superquadratic polynomial in $u$.
For the function $h$ we assume throughout that $|h_z(x,u,v)|\le o(|u|+|v|)$ uniformly in $x\in [0,\pi]$.
The perturbed equations are 
\begin{equation}\label{eq:CR_BVP1e}
\left\{
\begin{aligned}
u_t  &= v_x + \psi_\lambda(u) +\epsilon h_u(x,u,v), \\
v_t  &= -u_x + v + \epsilon h_v(x,u,v), \\
\end{aligned}
\right.
\end{equation}
with boundary conditions $v(0)=v(\pi)=0$. In the variational formulation of $\A^\epsilon_{\text{CR}}$ the natural Sobolev space is $(u,v)\in W^{1,2}([0,\pi]\times L^2([0,\pi])$. The boundary conditions $v(0)=v(\pi)=0$ then occur as natural boundary conditions. The boundary conditions for $u_x$ follow from the stationary equations. If $\epsilon=0$ this implies that $u_x(0)=u_x(\pi) =0$.
In the formulation of the $L^2$-heat flow of $\A^\epsilon_{\text{CR}}$, in the spirit of Floer's treatment of the Cauchy-Riemann equations, we impose $W^{1,2}$-regularity on both $u$ and $v$ and the zero Dirichlet boundary conditions on $v$. This choice is important for choosing the Sobolev space on which to study the linearized operator and its Fredholm theory.

\subsection{The relative Morse index}

As pointed out in the  introduction, strongly indefinite problems have infinite Morse (co)-index.
This complication defies a standard counting definition for an index. 
Instead we use the approach proposed by Floer in his treatment of the Hamilton action (e.g. see \cite{MR1703347,MR987770}). Morse index information is information about the spectrum of the linearized problem. The idea is that one can compare two linearized operators by connecting them by a continuous path of operators. This defines a Fredholm operator whose index measures a difference between `relative indices'. The concrete procedure is described below and is based on the principle of spectral flow of self-adjoint operators, cf.\ \cite{MR1331677}. 

Let $z=(u,v)$ be a solution of Problem \eqref{eq:CR_BVP1e} with $\lim_{t\to\pm \infty} z(t,\cdot) = w_\pm$, where $w_\pm$ are \emph{hyperbolic} critical points of $\A^\epsilon_{\text{CR}}$.
Linearizing the equations in \eqref{eq:CR_BVP1e} yields a linear operator 

\begin{equation}\label{eq:CR_BVP12}
\left( \begin{array}{c} \xi \\ \eta \end{array} \right) 
\mapsto
\left( \begin{array}{c} 
\xi_t  - \eta_x - \psi'_\lambda(u)\xi -\epsilon h_{uu}(x,u,v)\xi -\epsilon h_{uv}(x,u,v)\eta \\
\eta_t  +\xi_x - \eta -\epsilon h_{uv}(x,u,v)\xi -\epsilon h_{vv}(x,u,v)\eta \end{array} \right)  .
\end{equation}
Such linearized Cauchy-Riemann equations are written compactly as
\begin{equation}\label{e:LK}
L_K \bydef \partial_t-J\partial_x - K(t,x),
\end{equation}
where $J= \begin{psmallmatrix} 0 & 1 \\ -1 & 0\end{psmallmatrix}$ is the standard symplectic $2\times 2$-matrix and $K$ is a ($2\times 2$) matrix-valued function with asymptotic limits $\lim_{t\to\pm\infty} K(t,x) = K_{\pm}(x)$.
The operators $L_K$ of this type are Fredholm operators mapping $W^{1,2}(\R\times [0,\pi]) \times W^{1,2}_0(\R\times [0,\pi])$ to $L^{2}(\R\times [0,\pi])^2$, and the Fredholm index ${\rm ind}(L_K)$ only depends
on the limits $K_\pm$, which we denote by
\[
  {\rm ind} (L_K)= \iota(K_-,K_+) ,
\]
cf.\ \cite{MR1331677}.
When $J\partial_x + K_\pm = -d^2\A^\epsilon_{\text{CR}}(w_\pm)$ we define the \emph{relative Morse index} $i(w_-,w_+)$  of $w_-$ and~$w_+$ as
the Fredholm index 
\[ 
  i(w_-,w_+) \bydef \iota(K_-,K_+) ,
\]
where $K_\pm = - J\partial_x -d^2\A^\epsilon_{\text{CR}}(w_\pm)$.
The Fredholm index satisfies the co-cycle property, which expresses that concatenation of paths corresponds to addition of Fredholm indices. In particular, if $w$, $w'$ and $w''$ are critical points of $\A^\epsilon_{\text{CR}}$ then
\[
i(w,w') + i(w',w'') = i(w,w'').
\]
This property implies that the relative index function on the critical points  is well-defined. 
One may normalize
the index, for example by setting
\[
\mu(w) \bydef \iota(K, K_0),
\]
where $J \partial_x + K = -d^2\A^\epsilon_{\text{CR}}(w)$ and $K_0 = \begin{psmallmatrix} -1 & 0 \\ 0 & 1\end{psmallmatrix}$.
The operator $J \partial_x + K_0$ is hyperbolic with spectrum $\{-1\}\cup\{\pm\sqrt{n^2+1} \}_{n=1}^\infty$. With the normalized Morse index we obtain
\[
\iota(w,w') = \mu(w) - \mu(w').
\]
In the Fredholm theory for operators $L_K$ of the form~\eqref{e:LK}, the Fredholm index can be related to another characteristic of self-adjoint operators: spectral flow.
Let $\sigma \mapsto B(\sigma)$, $\sigma\in [-1,1]$, be a smooth path of self-adjoint operators such that
\[
B(\pm 1) = -d^2\A^\epsilon_{\text{CR}}(w_\pm).
\]
A path can be deformed slightly to be a \emph{generic} path, that is $B(\sigma)$ is singular only  for finitely many values of $\sigma$, \cite[Section 4]{MR1331677}.
We denote $I =\{ \sigma \in (-1,1) : B(\sigma) \text{ is singular} \}$,
where we assume that the end points $B(- 1)$ and $B(+1)$ are regular ($w_\pm$ are hyperbolic critical points).
Moreover, at any $\sigma_0 \in I$ the kernel of $B(\sigma_0)$ 
is 1-dimensional,
and the single eigenvalue such that $\lambda(\sigma_0)=0$
crosses zero transversally: $\lambda'(\sigma_0)\not =0$.
One defines the \emph{spectral flow} of a generic path as
\[
{\rm specflow}(B(\sigma))  \bydef \sum_{\sigma_0 \in I} {\rm sign}(\lambda'(\sigma_0)).
\]
The spectral flow does not depend on the chosen (generic) path, hence the definition of the spectral flow can be extended to nongeneric paths.
The spectral flow is related to the Fredholm operators $L_K$ as follows: 
\[
i(w_-,w_+) = {\rm ind}\bigl( L_K \bigr) = {\rm specflow}( J \partial_x  + K(\sigma,x)).
\]
The link between the relative index and the Fredholm operator 
is used again in the next section to determine the dimension of sets of bounded solutions.

\subsection{Isolating neighborhoods} \label{geninv12}

Problem \eqref{eq:CR_BVP1} is ill-posed when viewed as an initial value problem. 
This is typical for problems with an indefinite structure. At closer inspection the ill-posed initial value problem defines an elliptic problem on a `cylinder-like' domain. The philosophy of Morse-Floer homology is to study complete flow lines as opposed to evolving initial points.
In particular, it make sense  to consider the \emph{set of bounded solutions}
\[
\mathcal{W}_{\epsilon,h} \bydef \Bigl\{ z=(u,v) : \R\times [0,\pi] \to \R^2 \Bigm|  z\text{ solves~\eqref{eq:CR_BVP1e} and }\int_{\R\times [0,\pi]} |z_t|^2 <\infty\Bigr\}.
\]
When $\psi(u)=\Psi'(u)$ is a coercive nonlinearity, that is $\psi(u)u\le c_0-c_0|u|^{2+c_1}$, for some $c_0,c_1>0$, then we have the following compactness result:
\begin{prop} 
\label{compact1}
Suppose $\psi$ is coercive.
Then the set $\mathcal{W}_{\epsilon,h}$ is compact in $C^1_{\rm loc}(\R\times[0,\pi])$, for all $\epsilon$ and all $h$.
\end{prop}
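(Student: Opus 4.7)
The statement is a standard Floer-type compactness result for the perturbed Cauchy--Riemann system \eqref{eq:CR_BVP1e}. I would proceed in three steps: (i) establish a uniform $L^\infty$ bound on $\mathcal{W}_{\epsilon,h}$ using coercivity of $\psi$; (ii) upgrade to uniform $C^k$ bounds on every compact subset via elliptic regularity for the Cauchy--Riemann operator; (iii) apply a diagonal Arzel\`a--Ascoli extraction to obtain $C^1_{\text{loc}}$ compactness.

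\textbf{Step (i): $L^\infty$ bound.} I would first eliminate $v$. Differentiating the first equation of \eqref{eq:CR_BVP1e} in $t$ and the second in $x$, cancelling the mixed derivative $v_{xt}=v_{tx}$, and substituting $v_x=u_t-\psi_\lambda(u)-\epsilon h_u$ from the first equation, one obtains
\[
u_{tt}+u_{xx} \;=\; \bigl(1+\psi'_\lambda(u)\bigr)\,u_t \;-\; \psi_\lambda(u) \;+\; \epsilon\,R(x,u,v,u_x,v_x,v_t),
\]
with Neumann boundary conditions $u_x=0$ at $x=0,\pi$, where $R$ is linear in the gradient components with coefficients smooth in $u,v$. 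This is a semilinear elliptic equation on the strip $\R\times[0,\pi]$. The coercivity $\psi_\lambda(u)u<0$ for $|u|\gg 1$, together with $|h_z|=o(|u|+|v|)$, yields $\|u\|_\infty\le M_1$ by a maximum-principle argument (at an interior or Hopf-type boundary maximum $u_t=u_x=0$, leaving $-\psi_\lambda(u)\le O(\epsilon)$, which by coercivity bounds $u$). Should the sign-changing advective term $(1+\psi'_\lambda(u))u_t$ obstruct a direct maximum-principle argument, the fallback is a rescaling/blow-up argument: any sequence $z_n\in\mathcal{W}_{\epsilon,h}$ with $\|u_n\|_\infty\to\infty$ rescales to an entire solution of a limiting elliptic equation whose coercive leading nonlinearity forces the limit to be constant, contradicting the rescaling. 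The bound on $v$ then follows from the second equation of \eqref{eq:CR_BVP1e} using the $L^\infty$ bound on $u_x$ produced by step (ii) applied to the equation for $u$ alone and the $L^2$ control on $v_t$ coming from $\int|z_t|^2<\infty$.

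\textbf{Steps (ii) and (iii): bootstrap and extraction.} The operator $\partial_t-J\partial_x$ is a first-order elliptic (Cauchy--Riemann) system, and the mixed Neumann/Dirichlet boundary conditions are elliptic; with $z$ uniformly bounded in $L^\infty$ and $\psi_\lambda,h$ smooth, standard interior and boundary Schauder/$L^p$ estimates yield uniform bounds $\|z\|_{C^k(K)}\le C_{k,K}$ for every $k\ge 1$ and every compact $K\subset\R\times[0,\pi]$, independent of $z\in\mathcal{W}_{\epsilon,h}$ and of $\epsilon$, $h$ in bounded ranges. Given $\{z_n\}\subset\mathcal{W}_{\epsilon,h}$ and an exhaustion by compacta $K_1\subset K_2\subset\cdots$, a diagonal Arzel\`a--Ascoli extraction produces a subsequence converging in $C^1(K_j)$ for every $j$, hence in $C^1_{\text{loc}}(\R\times[0,\pi])$. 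The limit $z_\infty$ satisfies \eqref{eq:CR_BVP1e} and, by Fatou applied to $|(z_n)_t|^2$, has $\int|(z_\infty)_t|^2<\infty$, so $z_\infty\in\mathcal{W}_{\epsilon,h}$.

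\textbf{Main obstacle.} The $L^\infty$ bound in step (i) is the only non-routine piece. Because $\A^\epsilon_{\text{CR}}$ is strongly indefinite (the term $vu_x-\tfrac12 v^2$ has no definite sign), uniform boundedness of $\A^\epsilon_{\text{CR}}(z(t,\cdot))$ along the orbit --- which does hold, since $\A^\epsilon_{\text{CR}}$ is Lyapunov and $\int|z_t|^2<\infty$ --- does not by itself yield pointwise bounds. Passing to the elliptic equation for $u$ circumvents this, but the sign-indefinite advective term $(1+\psi'_\lambda(u))u_t$ is what complicates the direct maximum-principle argument and motivates the blow-up alternative.
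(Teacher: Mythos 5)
Your three-step strategy (coercivity gives a uniform $L^\infty$ bound, elliptic estimates bootstrap it, diagonal Arzel\`a--Ascoli concludes) is exactly the route the paper indicates: its own ``proof'' is only the remark that the result rests on elliptic estimates and the a priori $L^\infty$ bound supplied by coercivity, with the details delegated to the cited references. So the approach is the right one; the problems are in two of the details you add. The clearest gap is the closedness step: Fatou only gives $\int|(z_\infty)_t|^2\le\liminf_n\int|(z_n)_t|^2$, which is vacuous unless the energies $\int|(z_n)_t|^2$ are uniformly bounded, and the definition of $\mathcal{W}_{\epsilon,h}$ imposes no such uniform bound (each element has finite energy, but nothing prevents the energies from blowing up along the sequence). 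The missing ingredient is the gradient-flow energy identity: since \eqref{eq:CR_BVP1e} is the negative $L^2$-gradient flow of $\A^\epsilon_{\text{CR}}$, one has $\int_{-T}^{T}\|z_t(t,\cdot)\|_{L^2}^2\,dt=\A^\epsilon_{\text{CR}}(z(-T,\cdot))-\A^\epsilon_{\text{CR}}(z(T,\cdot))$; once the $L^\infty$ bound and the ($t$-translation invariant, hence global) interior estimates give uniform $C^1$ bounds on the whole strip, $\A^\epsilon_{\text{CR}}(z(t,\cdot))$ is bounded uniformly over $\mathcal{W}_{\epsilon,h}$, so the energy is uniformly bounded and the $C^1_{\text{loc}}$ limit automatically satisfies the finite-energy condition. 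Without this your extraction only proves precompactness, not compactness.

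The second weak point is Step (i) itself. The strip is unbounded in $t$, so the supremum of $u$ need not be attained and the pointwise maximum-principle computation does not apply directly; your blow-up fallback in turn needs local compactness, i.e.\ local bounds on $v$ as well as $u$, which you do not yet have at that stage. Moreover, for $\epsilon\neq 0$ (and the proposition is claimed for all $\epsilon$, $h$) the remainder $R$ in your scalar equation contains $v$, $v_t$ and second derivatives of $h$ (terms like $h_{uv}v_t$ and $h_{vv}v_x$), none of which are controlled by the standing hypothesis on $h_z$; so the proposed ordering --- bound $u$ first, recover $v$ afterwards from the second equation using bounds produced in Step (ii) --- is circular for the perturbed system. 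A correct argument has to treat the pair $(u,v)$ in a coupled way (this, together with the energy identity above, is precisely what the references cited in the paper supply). In short: same approach as the paper, but as written the closedness step fails and the $L^\infty$ step is not yet a proof.
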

For the stationary problem the coercivity condition readily provides a priori estimates since $\int_{[0,\pi]} v^2\le
\int_{[0,\pi]} \psi(u) u \le C$, which shows that the stationary solutions $(u,v)$ are a priori bounded in $L^\infty$.
 For the time dependent system, the estimates are slightly more involved but also lead to a priori $L^\infty$-estimates for flow lines. 
 In \cite{MR1703347} an analogue of the coercivity condition is used to obtain a priori $L^\infty$-bounds
 on flow lines in the setting of an indefinite elliptic system. The arguments for the Cauchy-Riemann equations in this paper are similar and are left to the interested reader.
 
 The compactness replaces the Palais-Smale condition for traditional variational problems and allows a finite count of critical points and certain flow-lines.
The compactness result is based on elliptic estimates and the ``geometric type'' of the nonlinearity $\psi$ (coercivity).
The latter provides an a priori $L^\infty$-bound on complete trajectories $z(t,x)$.
The global compactness result is a crucial pillar for defining invariants, cf.\ \cite{MR1703347,GVVW,MR987770}. 
Nonetheless, when such a property is not available, for example when  $\psi (u)u>0$ for $|u|\to\infty$, one way to circumvent this problem is to consider bounded solutions restricted to a subset $\mathcal{N}\subset C^1([0,\pi])$.
In fact, we will use isolating neighborhoods $\mathcal{N}$ (see Definition~\ref{def:isolating} below) even for coercive nonlinearities. 
We define
\[ 
\mathcal{W}_{\epsilon,h}(\mathcal{N}) \bydef \bigl\{ z\in \mathcal{W}_{\epsilon,h} \bigm|  z(t,\cdot)\in \mathcal{N} \text{ for all } t\in \R\bigr\}.
\]
Bounded solutions define the equivalent of an invariant set since $t$-translation of bounded solutions of the nonlinear Cauchy-Riemann equations in $\mathcal{W}_{\epsilon,h}(\mathcal{N})$
induces a (continuous) $\R$-flow on the (metric) space $\mathcal{W}_{\epsilon,h}$ (compact-open topology). We define 
\begin{equation}
\label{invset}
\mathcal{S}_{\epsilon,h}(\mathcal{N}) \bydef \bigl\{z(0,\cdot) \bigm| z\in \mathcal{W}_{\epsilon,h}(\mathcal{N}) \bigr\}\subset \mathcal{N},
\end{equation}
which is called the \emph{maximal invariant set} in $\mathcal{N}$.
Points in $\mathcal{S}_{\epsilon,h}(\mathcal{N})$ will be denoted by $w$.
The induced $\R$-flow on $\mathcal{S}_{\epsilon,h}(\mathcal{N})$ is denoted by $\phi_{\epsilon,h}$.
In the case $\epsilon=0$ we write $\mathcal{W}(\mathcal{N})$,  $\mathcal{S}(\mathcal{N})$ and $\phi\colon \R\times\mathcal{S}(\mathcal{N}) \to \mathcal{S}(\mathcal{N})$ 
for the set of bounded solutions and the induced $\R$-flow, respectively.

The Cauchy-Riemann 
equations are special in the sense that the unique continuation property yields an $\R$-flow on $\mathcal{S}(\mathcal{N})$, cf.\ \cite{MR92067,SalZeh1}.
In general, while the $t$-translation flow on $\mathcal{W}(\mathcal{N})$ always defines a $\R$-flow, for parabolic equations such as~\eqref{e:OK} the induced flow on $\mathcal{S}(\mathcal{N})$ may yield 
only a semi-flow.
The most important reason for using the induced flow $\phi$ is to have a straightforward definition of isolation and isolating neighborhood,
leading 
to a compact metric space $\mathcal{S}(\mathcal{N})$, on which the theory of attractors and Morse representations from \cite{KMV1,KMV3} can be used.

\begin{prop}[\cite{GVVW,MR987770}]
\label{compact2}
Let $\mathcal{N}\subset C^1([0,\pi])$ be a closed set. If $\mathcal{S}_{\epsilon,h}(\mathcal{N})\subset \mathcal{N}$ is bounded, then
the set $\mathcal{W}_{\epsilon,h}(\mathcal{N})$ is compact in $C^1_{\rm loc}(\R\times[0,\pi])$.
\end{prop}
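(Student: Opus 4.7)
The strategy is to adapt the argument used for Proposition~\ref{compact1}. In that proof, coercivity of $\psi$ supplies an a priori $L^\infty$-bound on bounded solutions, and the rest is elliptic bootstrapping. Here the assumed $C^1$-boundedness of $\mathcal{S}_{\epsilon,h}(\mathcal{N})$ will play the role of that missing a priori bound, and the same bootstrap can be run.

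First I would fix a sequence $\{z_n\}_{n\in\N}\subset \mathcal{W}_{\epsilon,h}(\mathcal{N})$ and aim to extract a subsequence converging in $C^1_{\rm loc}(\R\times[0,\pi])$ to some $z_*\in \mathcal{W}_{\epsilon,h}(\mathcal{N})$. By $t$-translation invariance, for every $\tau\in\R$ the shifted orbit $z_n(\cdot+\tau,\cdot)$ is again in $\mathcal{W}_{\epsilon,h}(\mathcal{N})$, so each time-slice $z_n(\tau,\cdot)$ belongs to $\mathcal{S}_{\epsilon,h}(\mathcal{N})$. By hypothesis there exists $M$ with $\|z_n(\tau,\cdot)\|_{C^1([0,\pi])}\le M$ uniformly in $n\in\N$ and $\tau\in\R$. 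Substituting this bound back into~\eqref{eq:CR_BVP1e} also yields a uniform bound on $z_{n,t}$, since $\psi_\lambda$ and the derivatives of $h$ are continuous.

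Next I would bootstrap. The equation~\eqref{eq:CR_BVP1e} has the form $\partial_t z - J\partial_x z = F_\epsilon(x,z)$ with $J$ the symplectic matrix, i.e.\ a first-order elliptic Cauchy-Riemann-type system in $(t,x)$ coupled to a bounded, locally Lipschitz right-hand side. Interior Schauder (or $L^p$) estimates, together with the usual reflection trick to treat the mixed Neumann ($u$) and Dirichlet ($v$) conditions at $x=0,\pi$, upgrade the uniform $C^0$-bound to a uniform $C^{1,\alpha}$-bound on every compact $K_j=[-j,j]\times[0,\pi]$. Exhausting $\R\times[0,\pi]$ by $\{K_j\}_{j\in\N}$, applying Arzel\`a--Ascoli on each, and passing to a diagonal subsequence produces $z_{n_k}\to z_*$ in $C^1_{\rm loc}(\R\times[0,\pi])$.

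Finally I would verify that the limit lies in $\mathcal{W}_{\epsilon,h}(\mathcal{N})$. $C^1_{\rm loc}$-convergence implies that $z_*$ solves~\eqref{eq:CR_BVP1e} classically, and closedness of $\mathcal{N}$ in $C^1([0,\pi])$ implies $z_*(t,\cdot)\in\mathcal{N}$ for every $t$. The remaining integrability condition $\int_{\R\times[0,\pi]}|z_{*,t}|^2<\infty$ in the definition of $\mathcal{W}_{\epsilon,h}$ I would deduce from the gradient-flow identity $\tfrac{d}{dt}\A^\epsilon_{\text{CR}}(z(t,\cdot))=-\|z_t(t,\cdot)\|_{L^2}^2$: continuity of $\A^\epsilon_{\text{CR}}$ on $C^1([0,\pi])$ together with $C^1$-boundedness of $\mathcal{S}_{\epsilon,h}(\mathcal{N})$ gives $\int_{\R}\|z_{n_k,t}\|_{L^2}^2\,dt \le 2\sup_{\mathcal{S}_{\epsilon,h}(\mathcal{N})}|\A^\epsilon_{\text{CR}}|$ uniformly in $k$, and Fatou's lemma transfers this bound to $z_*$. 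The step I expect to require most care is the elliptic bootstrap, specifically uniformity of the Schauder constants in $n$ and control up to the spatial boundary under mixed Neumann/Dirichlet conditions; both are standard via reflection, but have to be set up for the ill-posed Cauchy-Riemann system rather than a classical boundary value problem.
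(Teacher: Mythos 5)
Your proposal is correct and matches the approach the paper relies on: Proposition~\ref{compact2} is not proved in the text but cited to \cite{MR3349416,MR987770}, where compactness is obtained exactly along your lines --- the boundedness of $\mathcal{S}_{\epsilon,h}(\mathcal{N})$ (via $t$-translation invariance of $\mathcal{W}_{\epsilon,h}(\mathcal{N})$) replaces the coercivity-based a priori bound of Proposition~\ref{compact1}, followed by the elliptic bootstrap for the Cauchy--Riemann operator with the mixed Neumann/Dirichlet (totally real) boundary conditions, Arzel\`a--Ascoli with a diagonal argument, and the energy identity to transfer the bound $\int_{\R\times[0,\pi]}|z_t|^2<\infty$ to the limit. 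The only cosmetic point is that boundedness of $\A^\epsilon_{\text{CR}}$ on the $C^1$-bounded set $\mathcal{S}_{\epsilon,h}(\mathcal{N})$ should be justified from the explicit form of the integrand rather than from continuity alone, which is immediate here.
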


As a consequence $\mathcal{S}_{\epsilon,h}(\mathcal{N})$ is a compact subset in $C^1([0,\pi])$. 
\begin{defn}[\cite{MR1703347,GVVW,MR987770}]\label{def:isolating}
A subset $\mathcal{N} \subset C^1([0,\pi])$ is called an \emph{isolating neighborhood} for Problem  \eqref{eq:CR_BVP1e},
if 
\begin{enumerate}
\item[(i)] $\mathcal{S}_{\epsilon,h}(\overline{\mathcal{N}})$ is compact;
\item[(ii)] $\mathcal{S}_{\epsilon,h}(\overline{\mathcal{N}})\subset {\rm int} (\mathcal{N})$.
\end{enumerate}
\end{defn}
A sufficient condition to guarantee
the boundedness of $\mathcal{S}_{\epsilon,h}(\mathcal{N})$ is an action bound:
\[
a\le \A^\epsilon_{\text{CR}}(z(t,x)) \le b,\quad \forall z\in \mathcal{N}, \quad a<b\in \R,
\]
cf.\ \cite{MR987770,MR1045282}.
A second important pillar for defining intrinsic invariants is a generic structure theorem for gradient systems.
We say that Problem   \eqref{eq:CR_BVP1e}  is \emph{generic} 
if (i): the critical points $w$ of $\A^\epsilon_{\text{CR}}$ are non-degenerate, i.e. $d^2\A^\epsilon_{\text{CR}}(w)$ is an invertible operator, and (ii): the \emph{adjoint} of the linearized problem \eqref{eq:CR_BVP12}
is onto 
for every bounded trajectory $z\in \mathcal{W}_{\epsilon,h}(\mathcal{N})$. 
The pair $(\epsilon,h)$ is called \emph{generic} in this case.

\begin{prop} 
\label{generic12}
For every $\epsilon\not =0$ and for almost every (in a well-defined sense) perturbation $h$, Problem  \eqref{eq:CR_BVP1e} is generic.
\end{prop}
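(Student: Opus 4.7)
The plan is to apply the classical Sard-Smale transversality scheme, following Floer's original strategy as refined by Schwarz and others in the Cauchy-Riemann setting. ``Almost every'' I would interpret as \emph{residual} (a countable intersection of open dense sets) inside a separable Banach space $\mathscr{H}$ of admissible perturbations $h$, which I would take to be a Floer-type weighted $C^k$ space of functions on $[0,\pi]\times\R^2$ satisfying the growth bound $|h_z|\le o(|u|+|v|)$ and dense in $C^k_{\rm loc}$.

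For clause (i), I would consider the universal critical point equation
\[
  F_1(w,h) \bydef -\nabla\A^\epsilon_{\text{CR}}(w) = 0
\]
as a map on the appropriate Sobolev space of $w$'s (with the prescribed boundary conditions) crossed with $\mathscr{H}$. At any zero $(w,h)$, the partial derivative $\partial_h F_1$ hits $-\epsilon\,\nabla_z h(\cdot,w)$, and because $\epsilon\ne 0$ and $\mathscr{H}$ is rich enough to realize arbitrary $L^2$-directions along the curve $x\mapsto w(x)$, the full linearization is surjective. Hence $F_1^{-1}(0)$ is a Banach submanifold whose projection onto $\mathscr{H}$ is Fredholm of index~$0$; Sard-Smale then furnishes a residual set $\mathscr{H}_1\subset\mathscr{H}$ over which every critical point of $\A^\epsilon_{\text{CR}}$ is nondegenerate.

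For clause (ii), for each ordered pair $(w_-,w_+)$ of such nondegenerate critical points (a countable collection once $h\in\mathscr{H}_1$), I would introduce a Banach manifold of $W^{1,2}$-trajectories $z$ with these asymptotics and set
\[
  F_2(z,h) \bydef z_t - J z_x - \nabla_z\bigl[\Psi(u) + \epsilon\, h(x,u,v)\bigr].
\]
Its $z$-linearization at a zero is precisely the Fredholm operator $L_K$ of~\eqref{e:LK}. Surjectivity of the full linearization $DF_2$ reduces, by $L^2$-pairing with an element $\eta$ of the cokernel, to showing that if $\int\langle\eta,\nabla_z h\rangle\,dt\,dx = 0$ for every admissible $h$-variation then $\eta\equiv 0$. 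A second application of Sard-Smale then yields a residual set $\mathscr{H}_2(w_-,w_+)$, and the countable intersection
\[
  \mathscr{H}_{\rm gen} = \mathscr{H}_1 \cap \bigcap_{(w_-,w_+)} \mathscr{H}_2(w_-,w_+)
\]
is the required generic set, residuality being preserved under countable intersection.

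The main obstacle is this cokernel vanishing step. Such an $\eta$ satisfies the formal adjoint Cauchy-Riemann equation, hence is itself a solution of an elliptic system on $\R\times[0,\pi]$. By localizing $h$-variations near a point $(x_0,z(t_0,x_0))$ at which the map $(t,x)\mapsto (x,z(t,x))$ is locally injective --- such points exist for any nonconstant bounded trajectory between distinct limits, since otherwise $z$ would collapse onto a single curve and the two asymptotic states would coincide --- one first forces $\eta$ to vanish on an open neighborhood of $(t_0,x_0)$. Unique continuation for the Cauchy-Riemann adjoint system, of the type already invoked in the paper via \cite{MR92067,MR1181727}, then upgrades this to $\eta\equiv 0$ on all of $\R\times[0,\pi]$, closing the argument.
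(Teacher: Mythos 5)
The paper itself does not prove Proposition~\ref{generic12}; it imports it from the Floer-theoretic literature it cites (\cite{MR1703347,MR3349416,MR987770}), and your Sard--Smale/universal-moduli-space scheme is indeed the standard route taken there: perturb to make critical points nondegenerate, then show surjectivity of the universal linearization along trajectories by a cokernel-vanishing argument. So the overall architecture is the right one.

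However, the step you rely on to kill the cokernel element has a genuine gap. Local injectivity of $(t,x)\mapsto(x,z(t,x))$ is both too cheap and too weak: it holds wherever $z_t\neq 0$, but it does not let you localize the effect of a perturbation $\delta h(x,u,v)=\chi(x)g(u,v)$ supported near $(x_0,z(t_0,x_0))$. The pullback of such a perturbation to the trajectory domain is supported on the \emph{full} preimage of that neighborhood, which may contain far-away times $t$ at which $z(t,x_0)$ returns close to $z(t_0,x_0)$ (and, since $z(t,x_0)\to w_\pm(x_0)$, it always contains neighborhoods of $\pm\infty$ if $z(t_0,x_0)$ is close to an asymptotic value). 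The vanishing of the $L^2$-pairing then only says that the contributions of all these components cancel, and you cannot conclude $\eta=0$ near $(t_0,x_0)$. What is actually needed is the existence (in fact density) of \emph{regular points}: points $(t_0,x_0)$ with $z_t(t_0,x_0)\neq 0$, $z(t_0,x_0)\neq w_\pm(x_0)$, and $z(t_0,x_0)\neq z(t,x_0)$ for all $t\neq t_0$. Your parenthetical justification (``otherwise $z$ would collapse onto a single curve and the limits would coincide'') only rules out $z_t\equiv 0$; it does not exclude self-returns at a fixed $x_0$, which is precisely the hard part. In the Hamiltonian Floer setting this is the Floer--Hofer--Salamon regular-points theorem, proved via the asymptotic behaviour of $z$ near the ends together with unique continuation, and an analogous lemma must be established here before your localization-plus-unique-continuation step closes. (A smaller omission: trajectories with $w_-=w_+$, i.e.\ constant-in-$t$ solutions, must be handled separately via nondegeneracy of the critical point rather than via regular points.)
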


In the case of the Cauchy-Riemann equations with periodic boundary conditions a proof of compactness is given in \cite[Section 4]{GVVW}. 
A precise statement of genericity is given in \cite[Section 9]{GVVW}. With minor adjustments one can carry out the same arguments for the Cauchy-Riemann equations with the boundary conditions used in this paper, which proves Proposition \ref{generic12}. We leave the details to the interested reader, see also \cite{SalZeh1,SalZeh2}.
In \cite[Section 9]{GVVW} the structure theorem is stated  for periodic boundary conditions and follows from the compactness and genericity. By the same token Theorem~\ref{struc12} is proved, cf.\ \cite{SalZeh1,SalZeh2}. 

When $\epsilon=0$ and $\mathcal{N}$ is an isolating neighborhood, then $\mathcal{N}$ is also isolating for $\epsilon\not=0$ sufficiently small. 
For isolating neighborhoods and generic pairs $(\epsilon,h)$ we have the following structure theorem:

\begin{thm}
\label{struc12}
Let $\mathcal{N}$ be an isolating neighborhood and let $(\epsilon,h)$ be a generic pair.
Then,
\[
\mathcal{W}_{\epsilon,h}(\mathcal{N}) = \bigcup_{w_-,w_+}\mathcal{W}_{\epsilon,h}(w_-,w_+;\mathcal{N}),
\]
where 
\[ \mathcal{W}_{\epsilon,h}(w_-,w_+;\mathcal{N}) \bydef \bigl\{z\in \mathcal{W}_{\epsilon,h}(\mathcal{N}) \bigm| \lim_{t\to\pm\infty}z(t,\cdot)=w_\pm\bigr\} ,
\]
and $w_\pm$ are (the finitely many) critical points of $\A^\epsilon_{\text{CR}}$ in $\mathcal{N}$.
The sets $\mathcal{W}_{\epsilon,h}(w_-,w_+;\mathcal{N})$ are smooth embedding manifolds (without boundary) and
\[
\dim \mathcal{W}_{\epsilon,h}(w_-,w_+;\mathcal{N}) = 
\iota(w_-,w_+).
\]
\end{thm}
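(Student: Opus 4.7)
The plan is to follow the standard Floer-theoretic blueprint in three ingredients: compactness, the gradient-like structure giving well-defined asymptotic limits, and Fredholm theory for the linearization between two hyperbolic endpoints. Each piece has been developed in the works cited in the paper (in particular~\cite{MR987770,MR1703347,MR3349416}); the point is to check that each applies in our isolating-neighborhood setting.

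First I would argue that only finitely many critical points of $\A^\epsilon_{\text{CR}}$ lie in $\mathcal{N}$. Each critical point $w$ is a $t$-independent bounded solution, hence sits inside the compact set $\mathcal{S}_{\epsilon,h}(\overline{\mathcal{N}})$ provided by Proposition~\ref{compact2}. Genericity of $(\epsilon,h)$ forces $d^2\A^\epsilon_{\text{CR}}(w)$ to be invertible, so each critical point is isolated in $C^1([0,\pi])$; a compact set contains only finitely many isolated points. For the decomposition, for any $z\in\mathcal{W}_{\epsilon,h}(\mathcal{N})$ the Lyapunov identity $\frac{d}{dt}\A^\epsilon_{\text{CR}}(z(t,\cdot)) = -\|z_t(t,\cdot)\|_{L^2}^2$ together with $\int_\R\|z_t\|_{L^2}^2\,dt<\infty$ (built into the definition of $\mathcal{W}_{\epsilon,h}$) forces $\|z_t(t_n,\cdot)\|_{L^2}\to 0$ along sequences $t_n\to\pm\infty$. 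Compactness of $\mathcal{S}_{\epsilon,h}(\mathcal{N})$ then forces any subsequential limit of $z(t_n,\cdot)$ to be a critical point, and connectedness of $\alpha$- and $\omega$-limit sets combined with isolation of critical points collapses the limits to single points $w_\pm$, producing the claimed disjoint-union decomposition.

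The heart of the proof is the manifold statement. For each pair $(w_-,w_+)$, I would set up a Banach manifold $\mathcal{B}_{w_-,w_+}$ of maps $z\colon\R\times[0,\pi]\to\R^2$ satisfying the boundary conditions and converging to $w_\pm$ in an appropriate weighted Sobolev space $W^{1,2}_\delta$ with small exponential weight $\delta>0$. Hyperbolicity of $w_\pm$ is crucial here: it guarantees that the asymptotic operators $-J\partial_x - K_\pm(x)$ are invertible on the weighted spaces, which in turn gives the exponential convergence of the actual trajectories near their endpoints. The perturbed Cauchy--Riemann equation~\eqref{eq:CR_BVP1e} then defines a smooth section $F$ of a Banach bundle over $\mathcal{B}_{w_-,w_+}$ whose zero locus is exactly $\mathcal{W}_{\epsilon,h}(w_-,w_+;\mathcal{N})$. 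At a zero $z$ the linearization $DF(z)$ coincides with the operator $L_K$ of~\eqref{e:LK}, which is Fredholm of index $\iota(w_-,w_+)$ as reviewed in the section above. Genericity of $(\epsilon,h)$ is precisely the surjectivity of $DF(z)$, so the implicit function theorem on Banach manifolds yields a smooth submanifold of dimension $\iota(w_-,w_+)$.

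The main obstacle is the Banach manifold setup itself: one has to choose the weight $\delta$ so that the asymptotic operators are invertible on the weighted space while the global operator on $\R\times[0,\pi]$ retains its Fredholm index coming from spectral flow, and one must verify smoothness of $F$ in that framework. A second subtlety specific to the Cauchy--Riemann setting is that~\eqref{eq:CR_BVP1e} is ill-posed as an initial value problem, so one cannot invoke an ODE flow on a function space; bounded trajectories must be treated directly as weak solutions of an elliptic system on the cylinder $\R\times[0,\pi]$, and all transversality is packaged into surjectivity of $L_K$. Once this framework is in place, the identification $\mathrm{ind}(L_K)=\mathrm{specflow}(J\partial_x + K(\sigma,x))$ reduces the dimension count to the relative index $\iota(w_-,w_+)$ already defined.
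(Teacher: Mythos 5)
The paper does not actually prove Theorem~\ref{struc12}: it is quoted as a standard structure theorem from the Floer-theoretic literature cited nearby (\cite{MR987770,MR1703347,MR3349416}), and your outline --- finiteness of critical points from compactness plus nondegeneracy, asymptotic limits from the Lyapunov identity and finite $\int\|z_t\|_{L^2}^2$, and the manifold statement via weighted Sobolev Banach-manifold setup, Fredholm theory for $L_K$, genericity as transversality, and the implicit function theorem --- is exactly the standard argument those references give. So your proposal is correct in approach and matches the intended (cited) proof; no genuine gap beyond the technical details you yourself flag as needing verification.
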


The structure theorem is used in Section \ref{construction1} for the purpose of building a chain complex that defines Floer homology. Essentially the structure theorem states that the `space' of flow lines with relative index difference equal to 1 is a finite set, and the `space' of flow lines with relative index difference equal to 2 is either a closed interval or a circle. Each flow line is regarded as a `point' in the aforementioned `space'. A detailed description can be found in \cite[Section 10]{GVVW}, see also \cite{MR1239174}.

The fact that $\mathcal{N}$ is an isolating neighborhood is crucial for the above structure theorem. Without isolation the decomposition may not hold. The latter is the key ingredient for building a homology theory, cf.\ Sect. \ref{construction1}. The definition of Floer homology associated to isolating neighborhoods, not just the entire function space, dates back to Floer, cf.\ \cite{MR987770}. In \cite{GVVW} isolating neighborhoods for Floer homology are used to study Morse representations. The definition of Floer homology of a single critical point uses a neighborhoods as isolating neighborhood. Finally, sub/super level sets of the action provide isolating neighborhoods which can be used to formulate Morse relations.

\subsection{The homology construction}
\label{construction1}
Theorem \ref{struc12} states that, generically, bounded solutions are connecting orbits or critical points.
This allows us to carry out a standard construction of chain complexes.
To reduce notational clutter we fix a base point for the index and consider the normalized index $\mu(w)$.
Given an isolating neighborhood~$\mathcal{N}$ and a generic pair $(\epsilon,h)$  we define
\[
C_k(\epsilon,h;\mathcal{N}) \bydef 
 \bigoplus_{\substack{d\A^\epsilon_{\text{CR}}(w)=0 \\ \mu(w)  =k}}
 \Z_2 \langle w\rangle   ,
\]
called the $k$-dimensional \emph{chain groups} over $\Z_2$. Here we have denoted by $\langle w\rangle$ the group generator corresponding to the critical point $w$.
The latter are finite dimensional since $\mathcal{W}_{\epsilon,h}(\mathcal{N})$ is compact.
Also by compactness $\mathcal{W}_{\epsilon,h}(w_-,w_+;\mathcal{N}) $ is a finite set of trajectories whenever $i(w_-,w_+)=\mu(w_-)-\mu(w_+) =1$.
This allows us to define the boundary operator
\[
\partial_k(\epsilon,h;\mathcal{N})\colon C_k(\epsilon,h;\mathcal{N})\to C_{k-1}(\epsilon,h;\mathcal{N}),
\]
given by
\[
\partial_k\langle w\rangle \bydef \sum_{\mu(w') =k-1}n(w,w') \langle w'\rangle,
\]
where $n(w,w')\in \Z_2$ is the number of trajectories  in $\mathcal{W}_{\epsilon,h}(w_-,w_+;\mathcal{N})$
modulo $2$.
In order to justify the terminology \emph{boundary operator} we observe that 
\begin{equation}
\label{sum12}
\bigl(\partial_{k-1}\circ \partial_k\bigr) \langle w\rangle = \sum_{\mu(w'')=k-2}\sum_{\mu(w')=k-1} n(w,w')n(w',w'')\langle w''\rangle.
\end{equation}
\begin{figure}[t]
\centerline{
\includegraphics[width=1.0\textwidth]{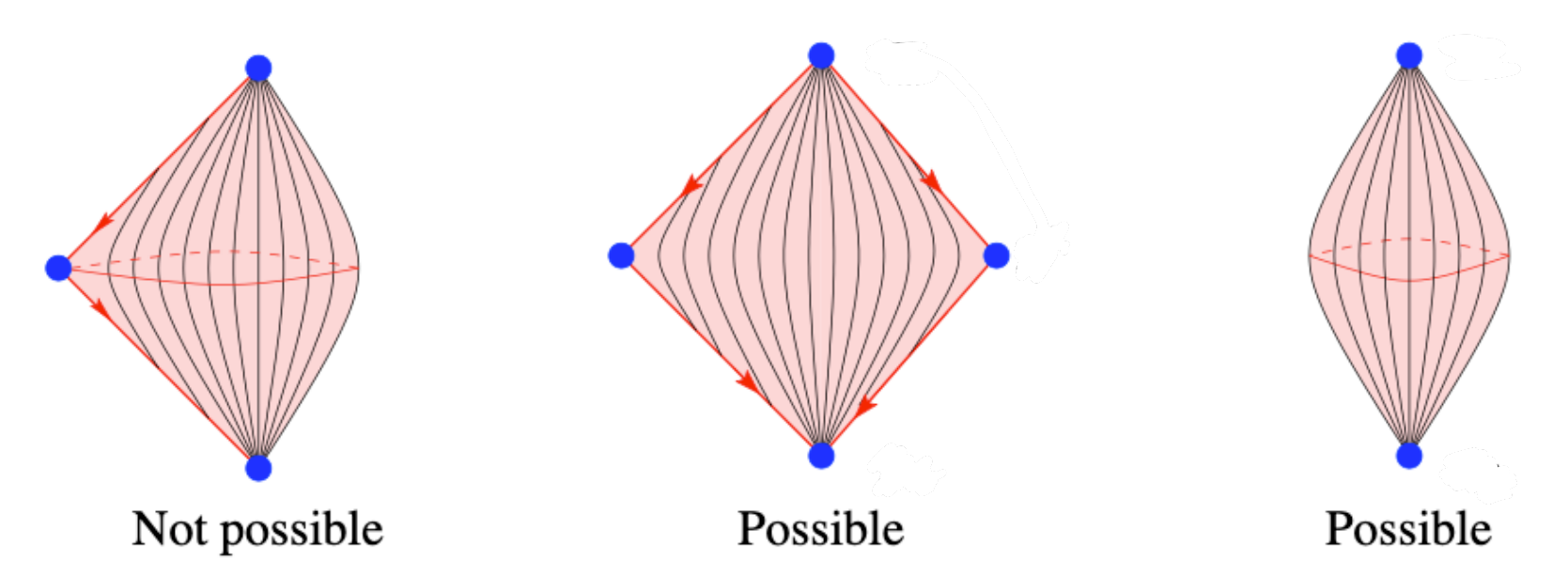}
}
\caption{The structure theorem indicates that the spaces of flow lines  with relative index difference equal to 2 can be visualized as a `diamond', or `pinched cylinder' (middle and right). Modulo time translations this yields either an closed interval of flow lines or a circle of flow lines. The scenario depicted on the left is prohibited by the structure theorem.}
\label{f:str-pos}
\end{figure}
The inner sum counts the number of 2-chain connections between $w$ and $w''$.
The structure theorem can be appended with the statement that every
(of the finitely many) components of $\mathcal{W}_{\epsilon,h}(w_-,w_+;\mathcal{N}) $ with
$i(w_-,w_+)=2$, is either a circle of trajectories or  an open interval of trajectories with distinct ends, cf.\ Fig. \ref{f:str-pos}. The latter implies that the sum in~\eqref{sum12} is even and therefore $\partial_{k-1}\circ \partial_k=0$ for all $k$, proving that
$\partial_k$ is indeed a boundary operator. Hence 
\[
\Bigl( C_k(\epsilon,h;\mathcal{N}),\partial_k(\epsilon,h;\mathcal{N})\Bigr),\quad k\in \Z,
\]
is a finite dimensional chain complex over the critical points of $\A^\epsilon_{\text{CR}}$ in $\mathcal{N}$.
The homology of the chain complex is defined as 
\begin{equation}
\label{FH12}
\HF_k(\epsilon,h;\mathcal{N}) \bydef\frac{ \ker \partial_k(\epsilon,h;\mathcal{N})}{{\rm im~} \partial_{k+1}(\epsilon,h;\mathcal{N})},
\end{equation}
which is the \emph{Floer homology} of the triple $(\epsilon,h;\mathcal{N})$.
The above construction needs the genericity since the structure Theorem~\ref{struc12} guarantees the topology of the flow line spaces discussed above. The isolation property guarantees that every flow line space associated with a closed interval contains both ends.
A priori the Floer homology depends on the three parameters $\epsilon$, $h$ and $\mathcal{N}$.
Basic properties of the Cauchy-Riemann equations can be used to show various invariance
properties of the Floer homology.

\begin{prop}[\cite{MR987770}] 
\label{homotopy1}
Let $\mathcal{N} \subset C^1([0,\pi])$ be a closed set and 
let $(\epsilon_s,h_s)_{s\in [0,1]}$ be a homotopy. Suppose that 
\begin{enumerate}
\item[(i)] $\mathcal{N}$ is an isolating neighborhood
for every pair $(\epsilon_s,h_s)$, $s\in [0,1]$;
\item [(ii)] the pairs $(\epsilon_0,h_0)$ and $(\epsilon_1,h_1)$ are generic.
\end{enumerate}
Then, $\HF_k(\epsilon_0,h_0;\mathcal{N}) \cong \HF_k(\epsilon_1,h_1;\mathcal{N})$ for all $k$, and concatenations of homotopies yield compositions of isomorphisms.
\end{prop}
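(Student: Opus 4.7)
The plan is to follow the standard Floer continuation argument, adapted to the isolating-neighborhood setting already developed in this section. I will construct a chain map $\Phi \colon C_*(\epsilon_0,h_0;\mathcal{N}) \to C_*(\epsilon_1,h_1;\mathcal{N})$ by counting solutions of a non-autonomous interpolating Cauchy--Riemann equation, verify that it is a chain map, and then invert it on homology by using the reversed homotopy together with a chain homotopy argument. To this end, pick a smooth monotone cut-off $\sigma\colon \mathbb{R}\to[0,1]$ with $\sigma(t)=0$ for $t\le -1$ and $\sigma(t)=1$ for $t\ge 1$, and insert $(\epsilon_{\sigma(t)},h_{\sigma(t)})$ into \eqref{eq:CR_BVP1e} to obtain a $t$-dependent problem. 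Let $\mathcal{W}^{\mathrm{hom}}(w_-,w_+;\mathcal{N})$ be the set of solutions $z$ of this equation with $z(t,\cdot)\in\mathcal{N}$ for all $t\in\R$ and $\lim_{t\to -\infty}z(t,\cdot)=w_-$, $\lim_{t\to +\infty}z(t,\cdot)=w_+$, where $w_-$, resp.\ $w_+$, is a critical point of $\A^{\epsilon_0}_{\text{CR}}$, resp.\ $\A^{\epsilon_1}_{\text{CR}}$. Because $\mathcal{N}$ is isolating for every $(\epsilon_s,h_s)$, the obvious parametrized extension of Proposition~\ref{compact2} gives compactness of $\mathcal{W}^{\mathrm{hom}}(\mathcal{N})$ in $C^1_{\mathrm{loc}}(\R\times[0,\pi])$.

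Next, by an argument parallel to Proposition~\ref{generic12} applied to the non-autonomous linearization, a generic choice of the interpolating perturbation makes $\mathcal{W}^{\mathrm{hom}}(w_-,w_+;\mathcal{N})$ a smooth manifold of dimension equal to the Fredholm index of its linearized operator; a spectral-flow/cocycle computation identifies this dimension with $\mu(w_-)-\mu(w_+)$. When $\mu(w_-)=\mu(w_+)$ the moduli space is a finite set of points, so I can define
\[
\Phi \langle w_-\rangle \bydef \sum_{\mu(w_+)=\mu(w_-)} n^{\mathrm{hom}}(w_-,w_+)\,\langle w_+\rangle,
\]
where $n^{\mathrm{hom}}(w_-,w_+)\in\Z_2$ is the mod-$2$ count of $\mathcal{W}^{\mathrm{hom}}(w_-,w_+;\mathcal{N})$. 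That $\Phi\circ\partial_*(\epsilon_0,h_0;\mathcal{N}) = \partial_*(\epsilon_1,h_1;\mathcal{N})\circ \Phi$ follows from inspecting the boundary of the $1$-dimensional components of $\mathcal{W}^{\mathrm{hom}}(w_-,w_+;\mathcal{N})$ when $\mu(w_-)-\mu(w_+)=1$: the compactness above plus a standard gluing/breaking analysis shows that each end of such a component is a broken trajectory consisting of either a $(\epsilon_0,h_0)$-connecting orbit glued to a $\Phi$-trajectory, or a $\Phi$-trajectory glued to a $(\epsilon_1,h_1)$-connecting orbit, and the mod-$2$ count of these ends vanishes.

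To upgrade $\Phi$ to an isomorphism on homology, run the same construction with the reversed homotopy $(\epsilon_{1-s},h_{1-s})$ to get $\Psi\colon C_*(\epsilon_1,h_1;\mathcal{N}) \to C_*(\epsilon_0,h_0;\mathcal{N})$, and then consider the two-parameter family of homotopies that deforms the concatenated homotopy ``forward then backward'' into the constant homotopy at $(\epsilon_0,h_0)$. The associated parametrized moduli space, together with the compactness provided by isolation along the whole two-parameter family, yields a chain homotopy $K$ with $\Psi\circ\Phi - \mathrm{id} = \partial_0 K + K \partial_0$; the symmetric construction gives the other side. Finally, the statement about compositions of homotopies yielding compositions of isomorphisms falls out of concatenating the interpolating cut-offs and a further chain-homotopy argument between the concatenated continuation map and the composition $\Phi_{12}\circ\Phi_{01}$.

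The hard part, and the place where the hypothesis that $\mathcal{N}$ is isolating for \emph{every} intermediate pair is essential, is proving the uniform compactness of $\mathcal{W}^{\mathrm{hom}}(\mathcal{N})$ and of its two-parameter analogue. If isolation fails at some $s^*\in(0,1)$ then a sequence of continuation trajectories could concentrate on the boundary of $\mathcal{N}$ and produce a broken configuration that escapes the chain complex, invalidating the gluing/breaking count and with it the chain-map and chain-homotopy identities. Everything else in the proof is the standard Floer continuation machinery from~\cite{MR987770,MR1703347}, and in the present Cauchy--Riemann setting it runs exactly as for~\eqref{eq:CR_BVP1e}, with Proposition~\ref{compact2} supplying the required compactness inputs and Proposition~\ref{generic12} supplying the transversality.
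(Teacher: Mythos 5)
Your sketch is the standard Floer continuation machinery---interpolating non-autonomous equation with a cut-off, compactness from the isolation hypothesis, generic transversality, a chain map from index-zero counts, chain-map and quasi-inverse identities from breaking/gluing and the stretched two-parameter family---which is exactly the argument of the cited source \cite{MR987770}; the paper states Proposition~\ref{homotopy1} without proof, deferring to that reference. So your proposal is correct at the level of detail given and follows essentially the same route as the paper's (cited) proof.
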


We may thus interpret the Floer homology as a Conley-Floer index $\HF_*(\mathcal{N})$ of the isolating neighborhood $\mathcal{N}$.
We note that for isolating neighborhoods $\mathcal{N}$ and $\mathcal{N}'$ with the same maximal invariant set $\mathcal{S}(\mathcal{N}) = \mathcal{S}(\mathcal{N}')$ the index is the same, which motivates the definition as an index for $\mathcal{S}$:
\[
 \HF_k(\mathcal{S}) =   \HF_k(\mathcal{N})  \cong   \HF_k(\mathcal{N}') , \qquad\text{for all } k \in \Z,
\]
which can be formalized via the usual inverse limit construction.

The next step is to see how the Conley-Floer index depends on the nonlinearity $\psi$. Consider a homotopy $\psi^s$, $s\in [0,1]$, which represents a continuous family of functions $\psi^s(u)$ of superlinear polynomial growth.
\begin{thm}[Continuation, cf.\ \cite{MR1703347,MR987770,SalZeh1}] 
\label{homotopy2}
Let $\mathcal{N} \subset C^1([0,\pi])$ be a closed set and 
let $(\psi^s)_{s\in [0,1]}$ be a homotopy. Suppose $\mathcal{N}$ is isolating for all $s\in [0,1]$.
Then 
\[ 
\HF_k(\mathcal{S}_0,\psi^0) \cong \HF_k(\mathcal{S}_1,\psi^1), \qquad\text{for all }  k\in \Z,
\]
where $\mathcal{S}_0$ and $\mathcal{S}_1$ are the isolated invariant sets in $\mathcal{N}$ with respect to $\psi^0$ and $\psi^1$ respectively.
\end{thm}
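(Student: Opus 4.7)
The plan is to reduce this continuation theorem to the homotopy invariance statement of Proposition \ref{homotopy1} by combining interpolation in the nonlinearity with a choice of generic perturbation pairs $(\epsilon,h)$. Since isolation is an open condition (as encoded by Definition \ref{def:isolating} and Proposition \ref{compact2}), and the family $(\psi^s)_{s\in[0,1]}$ is continuous with $\mathcal{N}$ isolating for every $s$, I would first use compactness of $[0,1]$ to extract a finite partition $0=s_0<s_1<\cdots<s_N=1$ and neighborhoods around each $s_i$ in which $\mathcal{N}$ remains isolating for all nearby $\psi^s$ and all sufficiently small perturbations $(\epsilon,h)$.

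Next, at each sample $s_i$ I would invoke Proposition \ref{generic12} to choose a generic pair $(\epsilon_i,h_i)$ with $\epsilon_i>0$ small, so that the Floer chain complex $(C_k(\epsilon_i,h_i;\mathcal{N}),\partial_k)$ for the nonlinearity $\psi^{s_i}$ is well-defined. Between consecutive samples I would concatenate two homotopies: first slide the nonlinearity from $\psi^{s_{i-1}}$ to $\psi^{s_i}$ while keeping $(\epsilon_{i-1},h_{i-1})$ fixed, and then slide the perturbation from $(\epsilon_{i-1},h_{i-1})$ to $(\epsilon_i,h_i)$ at $\psi^{s_i}$. Applying Proposition \ref{homotopy1} on each leg yields an isomorphism $\HF_k(\epsilon_{i-1},h_{i-1};\mathcal{N})\cong \HF_k(\epsilon_i,h_i;\mathcal{N})$, and composing across $i=1,\dots,N$ produces the desired $\HF_k(\mathcal{S}_0,\psi^0)\cong \HF_k(\mathcal{S}_1,\psi^1)$. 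At the chain level, each isomorphism is realised by counting modulo $2$ the bounded solutions of the non-autonomous Cauchy-Riemann equation obtained by interpolating $\psi$ (and the perturbation) via a monotone cutoff in the $t$-variable, with the usual arguments showing that these counts form a chain map and that reversed and concatenated homotopies induce chain-homotopic maps, hence isomorphisms on homology that are independent of the auxiliary data.

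The main obstacle will be ensuring uniform compactness of $\mathcal{W}_{\epsilon_s,h_s}(\mathcal{N})$ along each subinterval, because $(\psi^s)$ is only assumed to vary continuously among superlinear polynomials; leading coefficients and lower-order terms may shift, so the $L^\infty$ and $C^1$ bounds underlying Propositions \ref{compact1}--\ref{compact2} must be verified to be uniform in $s$ on the partition, and, in the strongly indefinite setting, one must also control the spectral flow of the $s$-dependent operators $L_{K(s,\cdot)}$ of the form \eqref{e:LK} so that the relative index $\iota$ used in the grading does not jump across $s_i$. Once this uniform compactness and uniform hyperbolicity of the endpoint critical points are in place, generic transversality of the non-autonomous linearized problem on the interpolating cylinder follows from a standard parametric Sard-Smale argument in the spirit of Proposition \ref{generic12}, and the one-dimensional moduli-space analysis that validates $\partial_{k-1}\circ\partial_k=0$ in \eqref{sum12} carries over to the continuation chain maps without essential change.
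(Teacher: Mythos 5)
The paper offers no proof of Theorem~\ref{homotopy2} at all: it is stated as a quoted result with references, so there is no internal argument to compare yours against. Your sketch is the standard continuation argument from that literature (fine partition of $[0,1]$, generic perturbations via Proposition~\ref{generic12}, continuation chain maps from the non-autonomous equation, chain homotopies for reversed and concatenated homotopies), and as an outline it is essentially correct.

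One step should be tightened, though. Proposition~\ref{homotopy1}, as stated, only covers homotopies of the perturbation pair $(\epsilon_s,h_s)$ at a \emph{fixed} nonlinearity, and the leg of your scheme in which the nonlinearity slides from $\psi^{s_{i-1}}$ to $\psi^{s_i}$ cannot literally be delegated to it: the difference of two superlinear polynomials is not an admissible perturbation, since the standing hypothesis on $h$ is $|h_z(x,u,v)|\le o(|u|+|v|)$ uniformly in $x$, so varying $\psi$ is not a special case of varying $(\epsilon,h)$. That leg must be carried entirely by the non-autonomous continuation map you describe in your second paragraph --- counting modulo $2$ the bounded solutions of the $t$-dependent Cauchy--Riemann equation interpolating $\psi^{s_{i-1}}$ and $\psi^{s_i}$ via a cutoff in $t$, with the partition taken fine enough that these interpolating solutions remain in $\mathcal{N}$ so that Proposition~\ref{compact2} applies --- which is exactly how the cited works proceed; Proposition~\ref{homotopy1} is then only needed for the legs where $(\epsilon,h)$ moves. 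Your concern about uniform compactness is the right one to flag: in the paper's actual application this is precisely what is secured by choosing a homotopy through coercive nonlinearities (the piecewise-linear path in $\lambda$ via $(-1,6)$) together with Propositions~\ref{compact1} and~\ref{compact2}, and the grading issue you raise is handled because the relative index is defined by spectral flow from a fixed hyperbolic base operator, so it is preserved along any homotopy during which the endpoint critical points stay hyperbolic.
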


In advantageous circumstances, the continuation theorem can be used to compute the Conley-Floer index, e.g.\ by continuation to a situation where there is just a single critical point (or none).
We denote the Betti numbers by 
\[
 \beta_k \bydef {\rm rank~} \HF_k\bigl(\mathcal{S}(\mathcal{N}),\psi\bigr).   
\]
Indeed, in all problems considered in the introduction, by varying parameters in the PDE (i.e.~performing a homotopy, or continuation) we arrive at a problem where we can analyze by hand the maximal invariant set $\mathcal{S}(\mathcal{N})$ completely. In our particular examples, the invariant set after continuation is then just a single homogeneous equilibrium,
hence the Betti numbers $\beta_k$ can be determined explicitly.

Furthermore, to formulate a forcing result for connecting orbits,
we assume that the number of hyperbolic critical points of relative index $k$ is bounded below by $\zeta_k$.
If $\zeta_k > \beta_k$ for some~$k$, then there must be at least one connecting orbit. We can be a bit more precise in the context where we have computationally found a finite set of hyperbolic critical points $U = \bigcup_k U_k = \bigcup_k \{u_{k,i}\}_{i=1}^{\zeta_k}$, where $u_{k,i}$ has relative index $k$. We use the notation $n_+ = \max\{n,0\}$. 
\begin{lem}[Forcing lemma]\label{l:forcing}
The number of points in $U_k$ that is not the $\alpha$ or $\omega$ limit set of any connecting orbit is at most $\beta_k$.
The number of connecting orbits with $\alpha$ or $\omega$ limit set in $U$
is bounded below by
\begin{equation}\label{e:forced}
  \frac{1}{2} \sum_{k} (\zeta_k - \beta_k)_+.
\end{equation}
\end{lem}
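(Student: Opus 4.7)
The plan is to reduce both assertions to additivity of the Conley--Floer index over disjoint isolated invariant sets, combined with the observation that a hyperbolic critical point that is not connected to anything else in $\mathcal{S}(\mathcal{N})$ is itself an isolated invariant set carrying index $\Z_2$ concentrated in the appropriate degree. Crucially, no Morse--Smale transversality on the original nonlinearity is needed: hyperbolicity of the points in $U$ together with the invariance properties of the Floer homology will be enough.

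First I would partition $U_k = A_k \sqcup B_k$, where $A_k$ consists of those $u_{k,i}$ that are neither the $\alpha$- nor the $\omega$-limit of any connecting orbit, and $B_k$ collects the rest. For each $u \in A_k$, every orbit starting in a sufficiently small neighborhood of $u$ must leave that neighborhood in both forward and backward time, unless it is $u$ itself, so $\{u\}$ is an isolated invariant set. Because $u$ is hyperbolic of relative index $k$, the local Conley--Floer index of $\{u\}$ is $\Z_2$ concentrated in degree $k$, as one sees by homotoping via Proposition~\ref{homotopy1} to a local model in which the chain complex is generated by $u$ alone with trivial boundary.

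Next I would apply the additivity of the Conley--Floer index to the decomposition
\[
  \mathcal{S}(\mathcal{N}) \;=\; \bigsqcup_{u \in A} \{u\} \;\sqcup\; \mathcal{S}',
\]
where $A = \bigcup_k A_k$ and $\mathcal{S}'$ collects the remaining invariant dynamics, to obtain
\[
  \HF_k(\mathcal{S}(\mathcal{N})) \;\cong\; \bigoplus_{u \in A_k} \Z_2 \;\oplus\; \HF_k(\mathcal{S}').
\]
Taking ranks gives $\beta_k \geq |A_k|$, which is exactly claim~1. For claim~2 I would count pairs $(u,\gamma)$ where $u \in U$ is a critical point and $\gamma$ is a connecting orbit having $u$ as its $\alpha$- or $\omega$-limit. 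By the definition of $A_k$ the number of such pairs is at least $\sum_k (\zeta_k - |A_k|)$, and since every connecting orbit has exactly one $\alpha$- and one $\omega$-limit, each orbit contributes at most two such pairs. Hence the number of connecting orbits with an endpoint in $U$ is at least $\tfrac{1}{2}\sum_k (\zeta_k - |A_k|)$, and combining with the elementary inequality $\zeta_k - |A_k| \geq (\zeta_k - \beta_k)_+$ (using $|A_k|\leq \beta_k$ from claim~1 together with nonnegativity) yields the lower bound $\tfrac{1}{2}\sum_k (\zeta_k - \beta_k)_+$.

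The main obstacle is not combinatorial but lies in the local Conley--Floer index computation underpinning the additive decomposition: one must verify that a hyperbolic critical point isolated in $\mathcal{S}(\mathcal{N})$ admits a small isolating neighborhood disjoint from the remaining invariant dynamics on which the index really is $\Z_2$ in degree~$k$. In the strongly indefinite setting this requires a small perturbation localized near the critical point, combined with continuation invariance, to reduce to a standard local model; this is the only nonelementary ingredient in the argument.
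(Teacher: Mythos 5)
Your route---splitting $\mathcal{S}(\mathcal{N})$ into unconnected hyperbolic points plus the rest and invoking additivity of the Conley--Floer index---is genuinely different from the paper's proof, which perturbs to a generic pair, shows at the chain level that the number $\xi^\epsilon_k$ of perturbed index-$k$ points with no orbit attached satisfies $\xi^\epsilon_k\le\beta_k$ (unconnected generators inject into homology), and then passes to the limit $\epsilon\to 0$, extracting unperturbed connecting orbits by compactness. Your combinatorial step is fine and matches the paper's: each connecting orbit has connected $\alpha$- and $\omega$-limit sets consisting of equilibria, so by hyperbolicity each limit set contains at most one point of $U$, whence at most two incidences per orbit, and $\zeta_k-|A_k|\ge(\zeta_k-\beta_k)_+$ gives the bound.

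The gap is in the decomposition $\mathcal{S}(\mathcal{N})=\bigsqcup_{u\in A}\{u\}\sqcup\mathcal{S}'$. What you actually verify---that $\{u\}$ is an isolated invariant set---holds for \emph{every} hyperbolic critical point, connected or not (orbits attached to $u$ pass arbitrarily close to it yet leave every small neighborhood in both time directions), so it never uses $u\in A_k$ and cannot by itself justify the splitting. What additivity requires is the stronger statement that each $u\in A$ is an isolated \emph{point} of $\mathcal{S}(\mathcal{N})$, i.e.\ $u\notin\overline{\mathcal{S}(\mathcal{N})\setminus\{u\}}$, so that $\mathcal{S}'$ is compact and the pieces admit disjoint isolating neighborhoods. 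This is true, but it is where the real analytic work sits: if bounded orbits $z_n$ had points converging to $u$, one translates time to the last exit from a small ball $B$ around $u$ containing no other equilibrium, uses the compactness of Proposition~\ref{compact2} to pass to a limit orbit lying in $\overline{B}$ for all negative (or positive) times, and uses the gradient(-like) structure to conclude that its $\alpha$- (or $\omega$-) limit set is exactly $\{u\}$, contradicting $u\in A$. This is essentially the same compactness/time-translation argument the paper hides in its ``taking the limit $\epsilon\to 0$'' step; without it your direct-sum formula for $\HF_k$ is unsupported. Once it is supplied, the remaining ingredients you invoke---additivity of $\HF$ over disjoint isolating neighborhoods (the perturbed chain complex splits because connecting orbits are connected) and the local index $\Z_2$ in degree $k$ for a hyperbolic point (small generic perturbation in $B_u$, a single generator, trivial boundary)---are not stated in the paper but are comparatively routine, and your argument then closes, buying a proof that does the counting entirely at the unperturbed level.
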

\begin{proof}
We outline the proof, cf.~\cite[Theorem 10.2]{BakkervdBergvdVorst}. 
Recall that we denote $U = \bigcup_k U_k = \bigcup_k \{u_{k,i}\}_{i=1}^{\zeta_k}$, where $u_{k,i}$ is hyperbolic with relative index $k$.
After reordering, let $V_k = \bigcup_{i=1}^{\xi_k} u_{k,i} \subset U_k$ consist of those points in $U_k$ that are \emph{not the $\alpha$ or $\omega$ limit set of any connecting orbit}, where thus $\xi_k \leq \zeta_k$.
The maximal invariant set $\mathcal{S}$ can be written as the disjoint union of $V=\bigcup_k V_k$ and $\hat{S}=\mathcal{S}\setminus V$. By definition, all points in $V$ are hyperbolic and $\hat{S}$ does not contain connecting orbits limiting to points in $V$. Hence $\hat{S}$ is an isolated invariant set. 
Let $\hat{\mathcal{N}}$ be an isolating neighborhood of $\hat{S}$. For sufficiently small $\epsilon>0$ the union of small balls $N_\epsilon = \bigcup_k \bigcup_i^{\xi_k} B_\epsilon(u_{k,i})$ is an isolating neighborhood of $V$, and $N_\epsilon$ is disjoint from $\hat{\mathcal{N}}$.
The disjoint union $\mathcal{N} = N_\epsilon \cup \hat{\mathcal{N}}$ is an isolating neighborhood of $\mathcal{S}$. We conclude that the Floer homology of $\mathcal{S}$ is given by 
\[
  H_k (\mathcal{S}) = H_k (V).\oplus H_k (\mathcal{\hat{S}}) 
   = (\Z_2)^{\xi_k} \oplus H_k (\mathcal{\hat{S}}).
\]
For the Betti numbers we infer that 
$\beta_k \geq \xi_k$, proving the first assertion.
Furthermore, by definition all points in $U_k \setminus V_k$ have a connecting orbit ``attached'' to it, and there are $\zeta_k - \xi_k \geq (\zeta_k -\beta_k)_+$ 
points in $U_k \setminus V_k$. 
Noticing that no more than two of the (hyperbolic) points in $U$ can be in the union of the
$\alpha$ and $\omega$ limit set of a single connecting orbit, we
arrive at the lower bound~\eqref{e:forced}.
\end{proof}

While for the cases encountered in our applications we are satisfied with the forcing result provided by this lemma, there is definitely room for improvement. For example, the ordering in terms of energy of the critical points contains additional information that can lead to stronger forcing results. In such situations one will need a refinement of the setup in terms of Morse representations, which we may indeed introduce in the current (Morse-Conley-Floer) context along the lines presented in~\cite{KMV1,KMV3}. 
We leave this for future work.


\section{Computing the equilibria and their relative indices} \label{s:setuphomotopy}

In this section, we introduce, in the context of the Cauchy-Riemann action
functional $\A_{\text{CR}}$, the rigorous computational method to obtain the
critical points and their relative indices. The approach is analogous for the
action functionals $\A_{\text{TW}}$ and $\A_{\text{OK}}$, see
Sections~\ref{s:TW} and~\ref{s:OK} for a discussion of the minor differences.
In Section~\ref{sec:rig_comp_critical points}, we introduce the rigorous method
to compute the critical points. This is done by solving a problem of the form
$F(a)=0$ posed on a Banach space of Fourier coefficients $a=(a_k)_k$ decaying
to zero geometrically. Then, in Section~\ref{sec:rig_comp_spectrum} we
introduce a method to control the spectrum of the derivative $DF(\ta)$ for each
rigorously computed critical points $\ta$. Finally, in
Section~\ref{sec:rig_comp_relative_indices}, we show how to compute rigorously
the relative index of critical points.

\subsection{Computation of the critical points}  \label{sec:rig_comp_critical points}

Studying a critical point $(u(x),v(x))$ of the action functional $\A_{\text{CR}}$ reduces to study the steady states (time independent solutions) of Problem \eqref{e:CR}, that is
\begin{equation}\label{eq:CR_BVP}
\begin{cases}
0 = v_x + \lambda_1 u- \lambda_2 u^3 ,  \\
0 = -u_x + v  , \\
u_x(0)=u_x(\pi)=0,\\
v(0)=v(\pi)=0.
\end{cases}
\end{equation}
Here we have added the redundant Neumann boundary conditions for $u$ (they follow immediately from the second equation) to make the symmetries more obvious. To be explicit, any solution pair $(u,v)(x)$ of~\eqref{eq:CR_BVP} can be extended to a periodic solution of the ODE system for $x\in \R$ with the symmetries
\[
u(-x)=u(x) \qquad\text{and}\qquad v(-x)=-v(x).
\]
Due to these symmetries, any solution $(u,v)$ of \eqref{eq:CR_BVP} can be expressed using the Fourier expansions
\begin{subequations}
\label{e:pairFourier}
\begin{align}
u(x) &= \sum_{k \in \Z} (a_1)_k e^{i k x}, && \quad (a_1)_k \in \R ~~ \text{and} ~~ (a_1)_{-k} =  (a_1)_k  ~\text{ for } k>0,
\\
v(x) &= \sum_{k \in \Z} i (a_2)_k e^{i k x}, && \quad (a_2)_k \in \R ~~ \text{and} ~~ (a_2)_{-k} =  -(a_2)_k  ~\text{ for } k>0,
\end{align}
\end{subequations}
{which are essentially a cosine series for $u(x)$ and a sine series for $v(x)$.}

There are several ways to transform the problem~\eqref{eq:CR_BVP} to the Fourier setting. Since the variational formulation is a crucial viewpoint, we choose to start by writing the action in terms of the Fourier (or rather cosine and sine) coefficients explicitly:
\[ 
  \A_{\text{CR}} (a_1,a_2) =  2 \sum_{k=1}^\infty k (a_1)_k (a_2)_k - \sum_{k=1}^\infty (a_2)_k^2 -  \frac{\lambda_1}{2} (a_1^2)_0 + \frac{\lambda_2}{4} (a_1^4)_0 ,
\]
where $a_1=\{(a_1)_k\}_{k \geq 0}$ and $a_2=\{(a_2)_k\}_{k > 0}$ are real variables. This creates a notationally inconvenient asymmetry between $a_1$ and $a_2$, and we use $(a_2)_0=0$ throughout without further ado.
The convolution powers are the natural ones stemming from the convolution product
\begin{equation}\label{e:convolution}
  (a_1 \ast \tilde{a}_1)_k = \sum_{k' \in \Z} (a_1)_{k'} (\tilde{a}_1)_{k-k'},
\end{equation}
when taking into account the  symmetries in~\eqref{e:pairFourier}, for example
\begin{alignat*}{1}
(a_1^2)_0 & = (a_1)_0^2 + 2 \sum_{k=1}^\infty (a_1)_k^2, \\ 
(a_1^4)_0 & = \sum_{k_1+k_2+k_3+k_4=0 \atop k_i \in \mathbb Z} (a_1)_{|k_1|} (a_1)_{|k_2|} (a_1)_{|k_3|} (a_1)_{|k_4|}.
\end{alignat*}
We have scaled out an irrelevant factor $\pi$ in the action compared to~\eqref{eq:functional_CR}. 
We choose as the inner product
\begin{equation}\label{e:innerproduct}
  \bigl\langle (a_1,a_2), (\tilde{a}_1,\tilde{a}_2) \bigr\rangle 
  \bydef \sum_{k \geq 0} (a_1)_k (\tilde{a}_1)_k + \sum_{k > 0} (a_2)_k (\tilde{a}_2)_k,
\end{equation}
so that the Hessian will have the straightforward appearance of a symmetric matrix (when restricted to natural finite dimensional projections).

\begin{rem}
We note that the alternative inner product
\[
  \bigl\langle\!\bigl\langle (a_1,a_2), (\tilde{a}_1,\tilde{a}_2) \bigr\rangle\!\bigr\rangle 
  \bydef 
  (a_1)_0 (\tilde{a}_1)_0 + 2 \sum_{k >0 } (a_1)_k (\tilde{a}_1)_k + 2 \sum_{k > 0} (a_2)_k (\tilde{a}_2)_k,
\]
is the one corresponding to the $L^2$ inner product in function space, which was used to interpret the Cauchy-Rieman equations~\eqref{e:CR} as the negative gradient flow of the action functional~\eqref{eq:functional_CR}. 
In terms of reading of symmetry properties from matrix representations this alternative inner product is less convenient, although this could be remedied by rescaling $(a_i)_k$ for $k>0$ by a factor $\sqrt{2}$.
On the other hand, a disadvantage of using such rescaled Fourier coefficients is that it would complicate the description of the convolution product.
Since the relative index is independent of the particular choice of inner product, we choose to work with~\eqref{e:innerproduct} in the setup for the relative index computations.
\end{rem}

We write $a=(a_1,a_2)$. Taking the negative gradient of $\A_{\text{CR}}$ with respect to the inner product~\eqref{e:innerproduct}, we arrive at the system  
\begin{equation}\label{eq:F=0_CR}
\left\{ 
\begin{array}{rlll}
(F_1(a))_0 &\!\!\!\!\bydef \lambda_1 (a_1)_0 - \lambda_2 (a_1^3)_0 &=0 \\[1mm]
(F_1(a))_k &\!\!\!\!\bydef
2[-k (a_2)_k + \lambda_1 (a_1)_k - \lambda_2 (a_1^3)_k ] 
&=0& \qquad \text{for } k>0,\\[1mm]
(F_2(a))_k &\!\!\!\!\bydef
2[-k (a_1)_k + (a_2)_k]
&=0& \qquad \text{for } k > 0.
\end{array}\right. 
\end{equation}
We use~\eqref{eq:F=0_CR} as the Fourier equivalent of~\eqref{eq:CR_BVP}.
The factors 2 in~\eqref{eq:F=0_CR} for $k>0$ are the result of the symmetries in~\eqref{e:pairFourier} in combination with the inner product choice~\eqref{e:innerproduct}.   

We set $F(a) = (\{F_1(a)\}_{k \ge 0},\{F_2(a)\}_{k > 0})$. 
Consider the Banach spaces (i.e., unrelated to the inner product)
\begin{equation} \label{eq:ell_nu_one}
\ell^1 \bydef \left\{ \tilde{a} = (\tilde{a}_k)_{k \ge 0} : \|\tilde{a}\|_{1} \bydef |\tilde{a}_0| + 2 \sum_{k=1}^\infty |\tilde{a}_k|   < \infty \right\},
\end{equation}
and $\ell^{1,0} = \{ \tilde{a}\in \ell^1 : \tilde{a}_0=0 \}$,
and define $X \bydef \ell^1 \times \ell^{1,0}$,
with the induced norm, given $a=(a_1,a_2) \in X$,
\begin{equation} \label{eq:normX_CR}
\|a\|_X \bydef \max\{ \|a_1\|_{1},\|a_2\|_{1} \}.
\end{equation}
The problem of looking for solutions of \eqref{eq:CR_BVP} therefore reduces to finding $a \in X$ such that $F(a)=0$, where the map $F$ is defined component-wise in \eqref{eq:F=0_CR}. Solving the problem $F=0$ in $X$ is done using computer-assisted proofs. The following Newton-Kantorovich type theorem provides an efficient means of performing that task. 

Denote by $B_r(a) \bydef \{ x \in X : \| x - a \|_X \le r\}$ the closed ball of radius $r>0$ centered at a given $a \in X$. 

\begin{thm}[{\bf A Newton-Kantorovich type theorem}] \label{thm:radii_polynomials}
Let $X$ and $X'$ be Banach spaces, $A^{\dagger} \in B(X,X')$ and $A \in B(X',X)$ be bounded linear operators.
Assume $F \colon X \to X'$ is Fr\'echet differentiable at $\ba \in X$, $A$ is injective and $A F \colon X \to X.$
Let $Y_0$, $Z_0$ and $Z_1$ be nonnegative constants, and let $Z_2:(0,\infty) \to (0,\infty)$ be a function satisfying
\begin{align}
\label{eq:general_Y_0}
\| A F(\ba) \|_X &\le Y_0
\\
\label{eq:general_Z_0}
\| I - A A^{\dagger}\|_{B(X)} &\le Z_0
\\
\label{eq:general_Z_1}
\| A[A^{\dagger} - DF(\ba)] \|_{B(X)} &\le Z_1,
\\
\label{eq:general_Z_2}
\| A[DF(c) - DF(\ba)]\|_{B(X)} &\le Z_2(r) r, \quad \text{for all } c \in B_r(\ba),
\end{align}
where $\| \cdot \|_{B(X)}$ denotes the operator norm.  Define the radii polynomial by 
\begin{equation} \label{eq:general_radii_polynomial}
p(r) \bydef Z_2(r) r^2 - ( 1 - Z_1 - Z_0) r + Y_0.
\end{equation}
If there exists $r_0>0$ such that $p(r_0)<0$, then there exists a unique $\ta \in B_{r_0}(\ba)$ such that $F(\ta) = 0$.
\end{thm}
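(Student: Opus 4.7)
The plan is to reformulate the equation $F(a)=0$ as a fixed-point problem for the Newton-like operator $T(a) \bydef a - AF(a)$, which is a self-map of $X$ by the hypothesis that $AF\colon X \to X$, and then apply the Banach contraction mapping theorem on the closed ball $B_{r_0}(\ba)$. Since $A$ is injective, fixed points of $T$ in $B_{r_0}(\ba)$ correspond bijectively to zeros of $F$ in $B_{r_0}(\ba)$, so both the existence and uniqueness assertions of the theorem will reduce to the existence and uniqueness of a fixed point of $T$.

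To establish the contraction property I would bound $T(a) - T(b)$ for $a,b \in B_{r_0}(\ba)$ using the integrated mean-value representation
\[
  T(a) - T(b) = \int_0^1 \bigl[I - A\, DF(b + s(a-b))\bigr](a-b)\, ds.
\]
For each $c \in B_{r_0}(\ba)$, the three-term decomposition
\[
  I - A\, DF(c) = (I - A A^{\dagger}) + A[A^{\dagger} - DF(\ba)] + A[DF(\ba) - DF(c)],
\]
together with the bounds \eqref{eq:general_Z_0}, \eqref{eq:general_Z_1}, and \eqref{eq:general_Z_2}, yields $\|I - A\, DF(c)\|_{B(X)} \le Z_0 + Z_1 + Z_2(r_0)\, r_0$. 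Integrating produces the Lipschitz estimate $\|T(a) - T(b)\|_X \le (Z_0 + Z_1 + Z_2(r_0) r_0)\,\|a - b\|_X$.

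For the self-mapping property I would combine the triangle inequality with the Lipschitz bound just obtained and \eqref{eq:general_Y_0}:
\[
  \|T(a) - \ba\|_X \le \|T(a) - T(\ba)\|_X + \|AF(\ba)\|_X \le (Z_0 + Z_1 + Z_2(r_0) r_0)\, r_0 + Y_0.
\]
The hypothesis $p(r_0) < 0$ is exactly $Z_2(r_0) r_0^2 + (Z_0 + Z_1) r_0 + Y_0 < r_0$, which on the one hand shows $\|T(a) - \ba\|_X < r_0$, so that $T$ maps $B_{r_0}(\ba)$ into itself, and on the other hand, after dividing by $r_0 > 0$ and using $Y_0 \ge 0$, shows that the Lipschitz constant $Z_0 + Z_1 + Z_2(r_0) r_0$ is strictly less than $1$. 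Banach's fixed-point theorem then delivers a unique $\ta \in B_{r_0}(\ba)$ with $T(\ta) = \ta$, and injectivity of $A$ converts this into $F(\ta) = 0$.

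No single step constitutes a genuine obstacle; the argument is essentially careful bookkeeping. The pleasing point to highlight is that the scalar inequality $p(r_0) < 0$ simultaneously encodes invariance of the ball and strict contractivity. One subtlety worth flagging is that although the hypothesis formally assumes Fréchet differentiability only at $\ba$, the estimate \eqref{eq:general_Z_2} tacitly requires $DF(c)$ to exist on $B_{r_0}(\ba)$ in order to make the integrated mean-value representation rigorous.
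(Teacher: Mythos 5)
Your proposal is correct and follows essentially the same route as the paper, which likewise reduces the statement to showing that the Newton-like operator $T(a) = a - AF(a)$ maps $B_{r_0}(\bar{a})$ into itself and is a contraction there (via the decomposition $I - A\,DF(c) = (I - AA^{\dagger}) + A[A^{\dagger}-DF(\bar{a})] + A[DF(\bar{a})-DF(c)]$ and the bounds $Y_0,Z_0,Z_1,Z_2$), and then invokes the Banach fixed point theorem together with injectivity of $A$. Your closing remark that the $Z_2$ bound tacitly presupposes differentiability of $F$ on the whole ball is a fair observation, consistent with how the bound is used here and in the reference the paper cites for the detailed argument.
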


\begin{proof}
The idea of the proof (for the details, see Appendix A in \cite{MR3612178}) is to show that the Newton-like operator $T(a) \bydef a-AF(a)$ satisfies $T:B_{r_0}(\ba) \to B_{r_0}(\ba)$ and it is a contraction mapping, that is, there exists $\kappa \in [0,1)$ such that $\|T(x)-T(y)\|_X \le \kappa \| x - y \|_X$, for all $x,y \in B_{r_0}(\ba)$. The proof follows from Banach fixed point theorem. 
\end{proof}

Proving the existence of a solution of $F=0$ using Theorem~\ref{thm:radii_polynomials} is often called the {\em radii polynomial approach} (see e.g.~\cite{MR2338393,MR2443030}). In practice, this approach consists of considering a finite dimensional projection of \eqref{eq:F=0_CR}, computing an approximate solution $\ba$ (i.e. such that $F(\ba) \approx 0$), considering an approximation $A^\dag$ of the derivative $DF(\ba)$ and an approximate inverse $A$ of $DF(\ba)$. Once the numerical approximation $\ba$ and the linear operators $A$ and $A^\dag$ are obtained, formulas for the bounds $Y_0$, $Z_0$, $Z_1$ and $Z_2(r)$ are derived analytically and finally implemented in a computer-program using interval arithmetic (see \cite{MR0231516}). The final step is to find (if possible) a radius $r_0>0$ for which $p(r_0)<0$. In case such an $r_0$ exists, it naturally provides a $C^0$ bound for the error between the approximate solution $(\bar u(x),\bar v(x))$ and the exact solution $(\tilde u(x),\tilde v(x))$, which have Fourier coefficients $\bar{a}$ and $\tilde{a}$, respectively, see~\eqref{e:pairFourier}.
The following remark makes this statement explicit. 

\begin{rem} [{\bf Explicit error control}] \label{rmk:error_control}
Assume that $r_0>0$ satisfies $p(r_0)<0$, where $p$ is the radii polynomial defined in \eqref{eq:general_radii_polynomial}.
Then the unique $\ta \in B_{r_0}(\ba)$ such that $F(\ta) = 0$ satisfies
\[
\| \ta - \ba \|_X = \max \left\{  \| \ta_1 - \ba_1 \|_{1}, \| \ta_2 - \ba_2 \|_{1} \right\} \le r_0,
\] 
which implies that
\begin{align*}
\| \tilde u - \bar u \|_{C^0} &= \sup_{x \in [0,\pi]} | \tilde u(x) - \bar u(x) | 
= \sup_{x \in [0,\pi]} \left| \sum_{k \in \Z} [ (\tilde a_1)_k - (\bar a_1)_k ] e^{i k x} \right| 
\\ & 
\le \sum_{k \in \Z} | (\tilde a_1)_k - (\bar a_1)_k | 
= \| \ta_1 - \ba_1 \|_{1} 
\le r_0.
\end{align*}
Analogously, we obtain the bound $\| \tilde v - \bar v \|_{C^0} \le r_0$. Higher regularity of $\tilde{u}$ and $\tilde{v}$ and corresponding error bounds on derivatives can be established through classical elliptic regularity arguments. Alternatively, smoothness can be built into the space of Fourier coefficients by choosing an exponentially weighted $\ell^1$ space, see e.g.~\cite{HLM}, but we aimed to keep computational details to an minimum in the current paper.
\end{rem}

As mentioned previously, the radii polynomial approach begins by computing an approximate solution $\ba$ of $F=0$. This first requires considering a finite dimensional projection. Fixing a projection size $m \in \N$, denote a finite dimensional projection of $a \in X$ by $a^{(m)} = \big( ((a_1)_k)_{k=0}^{m-1} ,((a_2)_k)_{k=1}^{m-1} \big) \in \R^{2m-1}$. The finite dimensional projection of $F$ is then given by $F^{(m)}=(F_1^{(m)},F_2^{(m)}):\R^{2m-1} \to \R^{2m-1}$ defined by 
\begin{equation} \label{eq:projection_F_CR}
F^{(m)}(a^{(m)}) \bydef 
\begin{bmatrix}
\left( F_1(a^{(m)})_k \right)_{0\leq k<m}
\\
\left( F_2(a^{(m)})_k \right)_{1\leq k<m}
\end{bmatrix}.
\end{equation}
Assume that a solution $\ba^{(m)}$ such that $F^{(m)}(\ba^{(m)}) \approx 0$ has been computed (e.g.~using Newton's method).
Given $i=1,2$, denote by $\ba_i = \left( (\ba_i)_0 ,\dots,(\ba_i)_{m-1},0,0,0,\dots \right)$ the vector which consists of embedding $\ba_i^{(m)} \in \R^m$ in the infinite dimensional space $\ell^1$ by {\em padding} the tail by infinitely many zeroes. We recall we set $(\ba_2)_0=0$ by symmetry convention. Denote $\ba = (\ba_1,\ba_2)$, and for the sake of simplicity of the presentation, we use the same notation $\ba$ to denote $\ba \in X$ and $\ba^{(m)} \in \R^{2m-1}$. Denote by $DF^{(m)}(\ba)$ the Jacobian of $F^{(m)}$ at $\ba$, and let us write it as
\[
DF^{(m)}(\ba)=
\begin{pmatrix}
D_{a_1} F_1^{(m)}(\ba) & D_{a_2} F_1^{(m)}(\ba)\\
D_{a_1} F_2^{(m)}(\ba) & D_{a_2} F_2^{(m)}(\ba)
\end{pmatrix} \in M_{2m-1}(\R).
\]
The next step is to construct the linear operator $A^\dag$ (an approximation of the derivative $DF(\ba)$), and the linear operator $A$ (an approximate inverse of $DF(\ba)$). Let
\begin{equation} \label{eq:dagA_CR}
A^\dagger=
\begin{pmatrix}
A_{1,1}^\dagger & A_{1,2}^\dagger\\
A_{2,1}^\dagger & A_{2,2}^\dagger
\end{pmatrix} ,
\end{equation}
whose action on an element $h=(h_1,h_2) \in X$ is defined by $(A^\dagger h)_i = A_{i,1}^\dagger h_1 + A_{i,2}^\dagger h_2$, for $i=1,2$. Here the action of $A_{i,j}^\dagger$ is defined as
\begin{align*}
(A_{i,1}^\dagger h_1)_k &= 
\begin{cases}
\bigl(D_{a_1} F_i^{(m)}(\ba) h_1^{(m)} \bigr)_k &\quad\text{for }   0 \leq k < m ,  \\
-\delta_{i,2} k (h_1)_k  &\quad\text{for }  k \ge m,
\end{cases}
\\
(A_{i,2}^\dagger h_2)_k &= 
\begin{cases}
\bigl(D_{a_2} F_i^{(m)}(\ba) h_2^{(m)} \bigr)_k &\quad\text{for }   1 \leq k < m,   \\
-\delta_{i,1} k (h_2)_k  &\quad\text{for }  k \ge m,
\end{cases}
\end{align*}
where $\delta_{i,j}$ is the Kronecker $\delta$. 
Consider now a matrix $A^{(m)} \in M_{2m-1}(\R)$ computed so that $A^{(m)} \approx {DF^{(m)}(\ba)}^{-1}$. We decompose it into four blocks:
\[
A^{(m)}=
\begin{pmatrix}
A_{1,1}^{(m)} & A_{1,2}^{(m)}\\
A_{2,1}^{(m)} & A_{2,2}^{(m)}
\end{pmatrix}.
\]
This allows defining the linear operator $A$ as
\begin{equation} \label{eq:A_CR}
A=
\begin{pmatrix}
A_{1,1} & A_{1,2}\\
A_{2,1} & A_{2,2}
\end{pmatrix} ,
\end{equation}
whose action on an element $h=(h_1,h_2) \in X$ is defined by $(Ah)_i = A_{i,1} h_1 + A_{i,2} h_2$, for $i=1,2$. 
The action of $A_{i,j}$  is defined as
 \begin{align*}
 (A_{i,1} h_1)_k &=
 \begin{cases}
 \left(A_{i,1}^{(m)} h_1^{(m)} \right)_k & \text{for }   0 \leq k < m    \\
- \delta_{i,2} \frac{1}{k} (h_1)_k  & \text{for }  k \ge m 
 \end{cases}
 \\
 (A_{i,2} h_2)_k &=
 \begin{cases}
 \left( A_{i,2}^{(m)}  h_2^{(m)} \right)_k & \text{for }   1 \leq k < m   \\
 -\delta_{i,1} \frac{1}{k} (h_2)_k  & \text{for }  k  \ge m.
 \end{cases}
 \end{align*}
Having obtained an approximate solution $\ba$ and the linear operators $\dagA$ and $A$, the next step is to construct the bounds $Y_0$, $Z_0$, $Z_1$ and $Z_2(r)$ satisfying \eqref{eq:general_Y_0}, \eqref{eq:general_Z_0}, \eqref{eq:general_Z_1} and \eqref{eq:general_Z_2}, respectively. Their analytic derivation will be done explicitly in Section~\ref{s:CR} for the Cauchy-Riemann equations. Assume that using these explicit bounds we applied the radii polynomial approach and obtained $r_0>0$ such that $p(r_0)<0$. As the following remark shows, this implies that $A$ is an injective operator. 

\begin{rem}[{\bf Injectivity of the linear operator \boldmath$A$\unboldmath}] \label{remark:injectivity_of_A}
If $r_0>0$ satisfies $p(r_0)<0$, then $Z_2(r_0) r_0^2 + (Z_0 + Z_1) r_0 + Y_0 < r_0$. Since $Y_0$, $Z_0$, $Z_1$ and $Z_2(r_0)$ are nonnegative, this implies that $\| I - A A^{\dagger}\|_{B(X)} \le Z_0 <1$. By construction of the linear operators $A$ and $\dagA$, this implies that
\[
\| I_{\R^{2m-1}} - A^{(m)} DF^{(m)}(\ba) \| < 1,
\]
which in turns implies that both $A^{(m)}$ and $DF^{(m)}(\ba)$ are invertible matrices in $M_{2m-1}(\R)$. Since $A^{(m)}$ is invertible and the tail part of $A$ is invertible by construction, $A$ is injective. 
\end{rem}

As consequence of Remark~\ref{remark:injectivity_of_A}, if $r_0>0$ satisfies $p(r_0)<0$, then $A$ is injective, and therefore there exists a unique $\ta \in B_{r_0}(\ba)$ such that $F(\ta)=0$.

\begin{rem}
Using a finite dimensional projection of size $m=100$ we computed two numerical approximations $\ba^{(1)}$ and $\ba^{(2)}$. In Figure~\ref{f:CR}, the approximate solutions $\ba^{(1)}$ (left) and $\ba^{(2)}$ (right) are plotted. 
For each approximation, the code {\tt script\_proofs\_CR.m} (available at \cite{codes_webpage}) computes with interval arithmetic (using INTLAB, see \cite{Ru99a}) the bounds $Y_0$, $Z_0$, $Z_1$ and $Z_2$ using the explicit formulas presented in Section~\ref{s:CR}. For each $\ba^{(i)} $ ($i=1,2$), the code verifies that $p(r_0^i)<0$. From this, we conclude that there exists $\ta^{(i)} \in X$ such that $F(\ta^{(i)})=0$ and such that $\| \ta^{(i)} - \ba^{(i)} \|_X \le r_0^i$, where
$r_0^1 = 4.7 \cdot 10^{-11}$
and
$r_0^2 = 1.1 \cdot 10^{-13}$.
\end{rem}

Having introduced the ingredients to compute a critical point $\ta$, we now turn to the question of controlling the spectrum of $DF(\ta)$.

\subsection{Controlling the spectrum of \boldmath $DF(\ta)$ \unboldmath} \label{sec:rig_comp_spectrum}

Due to the boundary conditions $v(0)=v(\pi)=0$, all eigenfunction pairs of the linearization around an equilibrium $(\tilde{u},\tilde{v})$ of~\eqref{eq:CR_BVP1} can smoothly be extended $2\pi$-periodically. Like the equilibrium itself, the eigenfunction pairs have the even/odd symmetries corresponding to cosine and sine series. This implies that the eigenvalues of $L_K(\tilde{u},\tilde{v})$ in function space $W^{1,2} \times W^{1,2}_0$
are in one-to-one correspondence with the eigenvalues of $DF(\tilde{a})$ in Fourier space $X=\ell^1 \times \ell^1_0$. In this section we analyse the spectral flow between the linearizations at two different critical points choosing a path that is represented in Fourier space.

We assume that using the radii polynomial approach of Theorem~\ref{thm:radii_polynomials}, we have proven existence of a unique $\ta \in B_{r_0}(\ba) \subset X$ such that $F(\ta)=0$ for some $r_0>0$ satisfying $p(r_0)<0$. Denote by $DF(\ta)$ the derivative at $\ta$. Recall that when we rigorously compute this solution we use an operator $\dagA$, defined by \eqref{eq:dagA_CR}, which approximates the Jacobian~$DF(\ba)$.

Both the Hessian $DF(a)$ and the approximation $\dagA$  of $DF(\ba)$ are symmetric with respect to the inner product~\eqref{e:innerproduct}, hence their eigenvalues are real-valued. 
Given any $c \in X$, we define the homotopy between $DF(c)$ and $\dagA$ by
\begin{equation} \label{eq:homotopy_dagA_DF}
\D_{c}(\sigma) \bydef (1-\sigma) DF(c)+ \sigma A^\dagger , \qquad\text{for } \sigma \in [0,1].
\end{equation}

\begin{thm} \label{thm:homotopy_dagA_DF}
Assume that $r_0>0$ satisfies $p(r_0)<0$ with $p$ given in \eqref{eq:general_radii_polynomial}.
For any $c \in B_{r_0}(\ba)$, we have
${\rm specflow}(\D_c(\sigma))  = 0$. Moreover, the unique zero $\tilde{a}$ of $F$ in $B_{r_0}(\ba)$ is a hyperbolic critical point.
\end{thm}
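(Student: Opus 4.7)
The plan is to prove that $\D_c(\sigma)$ is invertible for every $\sigma \in [0,1]$ and every $c \in B_{r_0}(\ba)$. Since $\D_c$ is (by the paper's earlier observation) a continuous path of self-adjoint operators with respect to the inner product~\eqref{e:innerproduct}, and since the spectral flow only counts eigenvalue crossings through $0$, invertibility along the path forces ${\rm specflow}(\D_c(\sigma))=0$.

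To establish invertibility along the homotopy, I would use the approximate inverse $A$ from~\eqref{eq:A_CR} and show that $\|I - A\D_c(\sigma)\|_{B(X)} < 1$, so that $A\D_c(\sigma)$ is invertible by Neumann series and $\D_c(\sigma)$ is invertible by the injectivity of $A$ established in Remark~\ref{remark:injectivity_of_A}. First I would rewrite
\[
I - A\D_c(\sigma) = (1-\sigma)\bigl(I - A\, DF(c)\bigr) + \sigma\bigl(I - A A^\dagger\bigr),
\]
so that by the triangle inequality it suffices to control the two endpoint terms.

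The right endpoint is bounded directly by $Z_0$. For the left endpoint, I would insert $\pm A A^\dagger$ and $\pm A\,DF(\ba)$:
\begin{align*}
\|I - A\, DF(c)\|_{B(X)}
&\le \|I - AA^\dagger\|_{B(X)} + \|A[A^\dagger - DF(\ba)]\|_{B(X)} \\
&\quad + \|A[DF(\ba) - DF(c)]\|_{B(X)}
\le Z_0 + Z_1 + Z_2(r_0)\, r_0,
\end{align*}
using~\eqref{eq:general_Z_0}--\eqref{eq:general_Z_2} and the assumption $c \in B_{r_0}(\ba)$. Taking the convex combination,
\[
\|I - A\D_c(\sigma)\|_{B(X)} \le Z_0 + (1-\sigma)\bigl(Z_1 + Z_2(r_0)\,r_0\bigr) \le Z_0 + Z_1 + Z_2(r_0)\, r_0.
\]
The hypothesis $p(r_0) < 0$ from~\eqref{eq:general_radii_polynomial} rearranges to $Z_0 + Z_1 + Z_2(r_0)\, r_0 < 1 - Y_0/r_0 \le 1$, which closes the estimate uniformly in $\sigma$ and $c$.

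The main subtlety is the passage from invertibility of $A\D_c(\sigma)$ on $X$ to invertibility of $\D_c(\sigma)\colon X \to X'$: injectivity follows because $\D_c(\sigma)x=0$ implies $A\D_c(\sigma)x = 0$, hence $x=0$; surjectivity follows because, given $y \in X'$, invoking invertibility of $A\D_c(\sigma)$ one solves $A\D_c(\sigma)x = Ay$, and then injectivity of $A$ yields $\D_c(\sigma)x = y$. The only remaining care point, which I expect to be the genuine obstacle one must be precise about, is interpreting ``spectral flow'' for this unbounded self-adjoint family on the appropriate Hilbert completion of $X$ (recall $DF$ contains the first-order operator $k\,\partial_k$); here one relies on the Fredholm framework of Section~\ref{s:theory} together with the fact that $A^\dagger$ and $DF(c)$ share the same leading symbol, so that $\D_c(\sigma)$ is a continuous path of self-adjoint Fredholm operators on the appropriate domain. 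Since $0 \notin \mathrm{spec}(\D_c(\sigma))$ for any $\sigma \in [0,1]$, no eigenvalue can cross zero along the homotopy, and ${\rm specflow}(\D_c(\sigma)) = 0$ follows.
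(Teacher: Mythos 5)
Your proposal is correct and follows essentially the same route as the paper: bound $\|I-A\D_c(\sigma)\|_{B(X)}$ by combining the $Z_0$ estimate with the triangle-inequality bound $\|I-A\,DF(c)\|_{B(X)}\le Z_0+Z_1+Z_2(r_0)r_0<1$ (a consequence of $p(r_0)<0$), take the convex combination over $\sigma$, and invoke a Neumann series together with injectivity of $A$ to conclude that $\D_c(\sigma)$ has trivial kernel for every $\sigma$, hence zero spectral flow. Your additional remarks on surjectivity and on the self-adjoint Fredholm framework are sound but go slightly beyond what the paper's argument records, which stops at $\ker(\D_c(\sigma))=\{0\}$.
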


\begin{proof}
From the hypothesis that $p(r_0)<0$, we obtain
\begin{equation}\label{e:sumZ}
Z_2(r_0) r_0 + Z_0 + Z_1 + \frac{Y_0}{r_0} = \frac{1}{r_0} ( Y_0 + (Z_0 + Z_1)r_0 + Z_2(r_0) r_0^2) < 1. 
\end{equation}
Hence
\begin{equation} \label{eq:bounds_less_than_one}
\| I - A A^\dagger \|_X \le Z_0 <1  \quad \text{and} \quad \sup_{c \in B_{r_0}(\ba)} \| I - A DF(c) \|_X <1,
\end{equation}
where the first inequality follows from the fact that $Z_0<1$, and the second inequality holds since, for any $c \in B_{r_0}(\ba)$,
\begin{align*}
\| I - A DF(c) \|_X &= 
\| I - A A^{\dagger} +  A[A^{\dagger} - DF(\ba)] + A[DF(\ba) - DF(c)]\|_X  \\
& \le \| I - A A^{\dagger}\|_X + \| A[A^{\dagger} - DF(\ba)] \|_X + \| A[DF(\ba) - DF(c)]\|_X \\
& \le Z_0 + Z_1 + Z_2(r_0) r_0 < 1,
\end{align*}
where the final inequality follows from~\eqref{e:sumZ}.
Hence, given any $c \in B_{r_0}(\ba)$ and any $\sigma \in [0,1]$,
\begin{align*}
\|I - A \D_c(\sigma) \|_X &= \| I - A (\sigma A^\dagger + (1-\sigma) DF(c)) \|_X
\\ &= \| (\sigma I+(1-\sigma)I - A (\sigma A^\dagger + (1-\sigma) DF(c)) \|_X  \\
&\le \sigma \| I - AA^\dagger \|_X + (1-\sigma) \|I - A DF(c)) \|_X \\
&< \sigma + (1-\sigma) = 1.
\end{align*}
By a standard Neumann series argument, the composition $A \D_c(\sigma)$ is invertible. We infer that $\D_c(\sigma)$ is injective  for all $\sigma \in [0,1]$. Hence ${\rm specflow}(\D_c(\sigma))  = 0$. Moveover, for the Hessian $DF(\tilde{a})$ at the critical point $\tilde{a} \in B_{r_0}(\ba)$ we have ${\rm ker}(DF(\tilde{a})) = {\rm ker}(\D_{\tilde{a}}(0)) = \{0\}$, implying that $\tilde{a}$ is hyperbolic.
\end{proof}

Assume that we have proven the existence of two critical points $\ta$ and $\tb$ of the Cauchy-Riemann problem \eqref{eq:F=0_CR} using the radii polynomial approach (Theorem~\ref{thm:radii_polynomials}). Denote by $\ba$ and $\bb$ the numerical approximation of $\ta$ and $\tb$,
and by $\dagA_{\ba}$ and $\dagA_{\bb}$ the approximate derivatives
used to obtain the computer-assisted proofs.
In addition to the paths $\D_{\ta}(\sigma)$ and $\D_{\tb}(\sigma)$ discussed above, we introduce the following  paths of linear operators:
\begin{alignat*}{2}
  \D_{\ta \to \tb} (\sigma) &=   
  (1-\sigma) DF(\ta)+ \sigma DF(\tb)   ,
  &\quad&\text{for } \sigma \in [0,1],\\
  \D^\dagger_{\ba \to \bb} (\sigma) &= 
   (1-\sigma) A^\dagger_{\ba}  + \sigma A^\dagger_{\bb},
   &\quad&\text{for } \sigma \in [0,1].
\end{alignat*}
To compute the relative index of $\ta$ and $\tb$ we use the identity
\begin{alignat}{1}
   i\bigl(\ta,\tb\bigr) &= {\rm specflow}\bigl(\D_{\ta \to \tb} (\sigma)\bigr) \nonumber\\
   &={\rm specflow}\bigl(\D_{\ta}(\sigma)\bigr) + {\rm specflow}\bigl(\D^\dagger_{\ba \to \bb} (\sigma)\bigr)- {\rm specflow}\bigl(\D_{\tb}(\sigma)\bigr) \nonumber \\
   &= {\rm specflow}\bigl(\D^\dagger_{\ba \to \bb} (\sigma)\bigr), \label{e:specflowdagger}
\end{alignat}
where we have used independence with respect to the chosen path, as well as Theorem~\ref{thm:homotopy_dagA_DF}.
In the next section we discuss how to compute
the spectral flow in the righthand side of~\eqref{e:specflowdagger}. 

\subsection{Computing the relative indices} \label{sec:rig_comp_relative_indices}

To continue the discussion from Section~\ref{sec:rig_comp_spectrum},
we assume that we have proven the existence of two critical points $\ta$ and $\tb$ of the Cauchy-Riemann problem \eqref{eq:F=0_CR} using the radii polynomial approach (Theorem~\ref{thm:radii_polynomials})
in balls around the numerical approximations $\ba$ and $\bb$.
We denote by $m_{\ba}$ and $m_{\bb}$ the dimensions of the finite dimensional  projections used, and we set $m=\max \{ m_{\ba} , m_{\bb} \}$.

Ordering the components of  $a$ as 
\[
a  = \left( (a_1)_0,
(a_1)_1, (a_2)_1,\dots,(a_1)_k, (a_2)_k,\dots \right)
\]
 leads to the following representation 
 of the linear operator $\dagA_{\ba}$:
\begin{equation*}
\dagA_{\ba} =
\begin{pmatrix}
DF^{(m_{\ba})}(\ba) &                        &                    & \\
                    & \Lambda_{m_{\ba}} &                     & \\
                    &                       & \Lambda_{m_{\ba}+1} & \\
                    &                       &                     & \ddots 
\end{pmatrix}, \qquad \Lambda_k \bydef \begin{pmatrix} 0 & -k \\ -k & 0 \end{pmatrix},
\end{equation*}
and similarly for $\dagA_{\bb}$.
Alternatively, we may write
\begin{equation*} 
\dagA_{\ba} =
\begin{pmatrix}
(\dagA_{\ba})^{(m)} &                        &                    & \\
                    & \Lambda_{m} &                     & \\
                    &                       & \Lambda_{m+1} & \\
                    &                       &                     & \ddots 
\end{pmatrix},
\end{equation*}
and similarly for $\dagA_{\bb}$.
The latter representation allows us to write the homotopy
\[
 \D^\dagger_{\ba \to \bb} (\sigma) 
 =
 \begin{pmatrix}
 (1-\sigma) (\dagA_{\ba})^{(m)} + \sigma (\dagA_{\bb})^{(m)}  &                        &                    & \\
                     & \Lambda_{m} &                     & \\
                     &                       & \Lambda_{m+1} & \\
                     &                       &                     & \ddots 
 \end{pmatrix}.
\]
 
The tail of $\D^\dagger_{\ba \to \bb} (\sigma)$ is independent of $\sigma$ 
and has eigenvalues $\{ \pm k : k \ge m\}$. 
Hence, any crossing of eigenvalues of 
$\D^\dagger_{\ba \to \bb} (\sigma)$ must come from the finite dimensional part
\[
  (\D^\dagger_{\ba \to \bb})^{(m)} (\sigma)  = (1-\sigma) (\dagA_{\ba})^{(m)} + \sigma (\dagA_{\bb})^{(m)} .
\]
We may perturb this finite dimensional path to a generic one
to conclude that 
\begin{alignat}{1}
	{\rm specflow}\bigl(\D^\dagger_{\ba \to \bb} (\sigma)\bigr)
	& =
	{\rm specflow}\bigl( (\D^\dagger_{\ba \to \bb})^{(m)} (\sigma) \bigr) \nonumber \\
	& =  n_{2m-1}\bigl((\dagA_{\ba})^{(m)} \bigr) -  n_{2m-1}\bigl((\dagA_{\bb})^{(m)} \bigr), 
	\label{e:counteigenvalues}
\end{alignat}
where $n_{2m-1}(Q)$ denotes the number of positive eigenvalues of a $(2m-1) \times (2m-1)$ matrix~$Q$.

Using the tools of validated numerics, 
one can use interval arithmetic and the contraction mapping theorem (e.g. via the method \cite{MR3204427}) 
to enclose rigorously all eigenvalues of $(\dagA_{\ba})^{(m)}$ and therefore compute $n_{2m-1}\bigl((\dagA_{\ba})^{(m)} \bigr)$.
A fast alternative, especially when the projection dimension is large and there are repeated eigenvalues (for example due to symmetry, such as in Problem~\eqref{e:cylinder} posed on the square $[0,\pi]^2$, see also Section~\ref{s:TW})
or when $m$ is large, is to determine $n_{2m-1}(Q)$  via a similarity argument (cf.~\cite{OKspacegroups}). Namely, one can determine a basis transformation $V$ using approximate eigenvectors of $Q$, enclose the inverse of $V$ by interval arithmetic methods, and compute the (interval-valued) matrix $Q_0 = V^{-1} Q V$. Then $Q_0$ has the same eigenvalues as~$Q$, and it is approximately diagonal. When none of the Gershgorin circles associated to~$Q_0$ intersect the imaginary axis, one may read of $n_{2m-1}(Q_0) = n_{2m-1}(Q)$ from the diagonal of~$Q_0$. We note that as another alternative (repeated) eigenvalues can also be verified conveniently with the built-in eigenvalue verification routine of INTLAB~\cite{Ru99a}.

In conclusion, by combining~\eqref{e:specflowdagger} and~\eqref{e:counteigenvalues} we find that
the (computable) formula
\begin{equation*}
i\bigl(\ta,\tb\bigr) =n_{2m-1}\bigl((\dagA_{\ba})^{(m)} \bigr) -  n_{2m-1}\bigl((\dagA_{\bb})^{(m)} \bigr)
\end{equation*}
for the relative index of~$\ta$ and $\tb$.


\section{The bounds for the Cauchy-Riemann equations} \label{s:CR}

In this section, we present the explicit construction of the bounds $Y_0$, $Z_0$, $Z_1$ and $Z_2(r)$ satisfying \eqref{eq:general_Y_0}, \eqref{eq:general_Z_0}, \eqref{eq:general_Z_1} and \eqref{eq:general_Z_2}, in the context of 
the Cauchy-Riemann zero finding problem $F=(F_1,F_2)=0$ given in \eqref{eq:F=0_CR}.
Denote by $\ba$ an approximate solution of $F=0$, and recall from \eqref{eq:dagA_CR} and \eqref{eq:A_CR} the definition of the linear operators $\dagA$ and $A$, respectively.

Before proceeding with the presentation of the bounds, we begin by introducing some elementary functional analytic results useful for the computation of the bounds. We omit the elementary proofs, which can mostly be found  in \cite{HLM}. We use the convention $(F_2)_0=0$ whenever convenient. To keep the notation light, throughout we implicitly use the projection $\pi_0 : \ell^1 \to \ell^{1,0}$ and natural embedding $\iota_0 : \ell^{1,0} \to \ell^{1}$ liberally, e.g., without further ado we identify  an operator $\Gamma_0 \in B(\ell^{1,0})$ with its counterpart $\Gamma = \iota_0 \Gamma_0 \pi_0 \in B(\ell^{1})$, etc.

\subsection{Elementary functional analytic results}

Recall the definition of the Banach space $\ell^1$ given in \eqref{eq:ell_nu_one}. 

\begin{lem}\label{l:dualbound}
The dual space $(\ell^1)^*$ is isometrically isomorphic to 
\[
\ell^\infty = \left\{ c = (c_k)_{k \ge 0} : 
\left\| c \right\|_{\infty} \bydef \max \left( |c_0|, \tfrac{1}{2} \sup_{k \geq 1} |c_k|  \right) < \infty \right\}.
\]
For all $b \in \ell^1$ and $c \in \ell^\infty$ we  have
\begin{equation}\label{e:dualbound}
\Bigl|\sum_{k\geq 0} c_k b_k \Bigr| \leq \|c\|_{\infty} \|b\|_{1}.
\end{equation}
\end{lem}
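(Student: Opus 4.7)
The plan is to establish the pointwise estimate \eqref{e:dualbound} first, since it contains essentially all the content, and then read off the isometric isomorphism from it. For the bound, I would split the series into the $k=0$ contribution and the tail $k\geq 1$. The $k=0$ term is immediately dominated by $|c_0|\,|b_0|\leq \|c\|_{\infty,\nu^{-1}}\|b\|_{1,\nu}$. For the tail, I would rewrite each term as
\[
  c_k b_k \;=\; \bigl(\tfrac{1}{2}c_k\nu^{-k}\bigr)\cdot \bigl(2\, b_k\nu^{k}\bigr),
\]
bound $|\tfrac{1}{2}c_k\nu^{-k}|\leq \|c\|_{\infty,\nu^{-1}}$ uniformly in $k$, and sum the second factor to recover $2\sum_{k\geq 1}|b_k|\nu^k\leq \|b\|_{1,\nu}$. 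Adding the two contributions yields \eqref{e:dualbound}.

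Having \eqref{e:dualbound}, I would define $T\colon \ell^\infty_{\nu^{-1}}\to (\ell^1_\nu)^*$ by $T(c)(b)\bydef\sum_{k\geq 0}c_k b_k$. The bound just proved shows that $T$ is well defined with $\|T(c)\|_{(\ell^1_\nu)^*}\leq \|c\|_{\infty,\nu^{-1}}$. For the reverse inequality, I would test on the standard basis: with $e_k$ denoting the sequence with a $1$ in position $k$ and zeros elsewhere, we have $\|e_0\|_{1,\nu}=1$ and $\|e_k\|_{1,\nu}=2\nu^k$ for $k\geq 1$. Then $|c_0|=|T(c)(e_0)|\leq\|T(c)\|$ and $|c_k|=|T(c)(e_k)|\leq 2\nu^{k}\|T(c)\|$, so that $\tfrac{1}{2}|c_k|\nu^{-k}\leq\|T(c)\|$. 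Taking the appropriate maximum gives $\|c\|_{\infty,\nu^{-1}}\leq\|T(c)\|$, so $T$ is an isometric embedding.

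Surjectivity of $T$ follows by the same basis trick in reverse. Given $\phi\in(\ell^1_\nu)^*$, I set $c_k\bydef\phi(e_k)$; the testing argument of the previous paragraph (applied with $\phi$ in place of $T(c)$) already shows $c\in\ell^\infty_{\nu^{-1}}$ with $\|c\|_{\infty,\nu^{-1}}\leq\|\phi\|$. By linearity $\phi$ and $T(c)$ agree on finitely supported sequences, and since these are dense in $\ell^1_\nu$ (the tail of any element of $\ell^1_\nu$ has $\|\cdot\|_{1,\nu}$-norm tending to zero), continuity gives $\phi=T(c)$.

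There is no real obstacle here; the only point that requires a moment of care is the asymmetric weighting in the definition of $\|\cdot\|_{1,\nu}$, where the $k=0$ coefficient has weight $1$ while the coefficients for $k\geq 1$ carry the extra factor $2$. This is exactly what forces the matching asymmetry in the dual norm (no factor for $c_0$ and a factor $\tfrac{1}{2}$ on the supremum), and it is what one naturally discovers by the basis-testing argument above.
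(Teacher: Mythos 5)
Your proof is correct. The paper itself gives no argument for this lemma (it is stated among "elementary functional analytic results" whose proofs are omitted with a citation to the literature), so there is nothing to compare line by line; your argument is the standard weighted $\ell^1$--$\ell^\infty$ duality proof, and it handles the one delicate point correctly, namely that the factor $2$ on the modes $k\geq 1$ in $\|\cdot\|_{1,\nu}$ (equivalently $\|e_k\|_{1,\nu}=2\nu^k$) forces the factor $\tfrac12$ in the dual norm, as your basis-testing step shows. One cosmetic remark: in the first paragraph, bound the $k=0$ term by $|c_0|\,|b_0|\leq \|c\|_{\infty,\nu^{-1}}|b_0|$ (not by the full product $\|c\|_{\infty,\nu^{-1}}\|b\|_{1,\nu}$) before adding the tail estimate, so that the sum of the two pieces reassembles exactly $\|c\|_{\infty,\nu^{-1}}\|b\|_{1,\nu}$; as written, literally adding your two displayed bounds would give an extra factor, though the intended argument is clear.
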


Given a sequence in $\ell^1$ we extend it symmetrically to negative indices.
The discrete convolution product~\eqref{e:convolution} then naturally works on $\ell^1$ by
\[
(b*\tilde{b})_k = \sum_{k_1+k_2 = k \atop k_1,k_2 \in \Z} b_{|k_1|} \tilde{b}_{|k_2|}.
\]
The following result states that $\ell^1$ is a Banach algebra under discrete convolutions and is useful for the analysis of nonlinear problems.

\begin{rem}\label{r:Q} 
We use the bound~\eqref{e:dualbound}
to estimate the convolution
\[
  \sup_{\|v\|_{1} \leq 1} | (b \ast v)_k | =
   \sup_{\|v\|_{1} \leq 1} \left| \sum_{k' \in \mathbb{Z}} v_{|k'|} b_{|k-k'|}   \right| \leq 
   \max \left\{ |b_k| ,\sup_{k'\geq1} \frac{|b_{|k-k'|} + b_{|k+k'|}|}{2} \right\} \bydef \Q_k(b).
\]
Given $v = (v_k)_{k \ge 0} \in \ell^1$, define $\hv \in \ell^1$ as follows:
\[
\hv_k \bydef \begin{cases} 0 & \text{if } k<m,\\
v_k & \text{if } k \geq m. \end{cases}
\]
A similar estimate as the one above leads to
\begin{equation} \label{eq:hQ}
  \sup_{\|v\|_{1} \leq 1} | (b \ast \hv)_k |  \leq 
  \sup_{k'\geq m} \frac{|b_{|k-k'|} + b_{|k+k'|}|}{2}  \bydef \hat{\Q}_k(b).
\end{equation}
Inequality \eqref{eq:hQ} is useful when computing the $Z_1$ bound (e.g. see Section~\ref{sec:Z1_CR}).
\end{rem}

\begin{lem} \label{lem:Banach_algebra_conv}
If $b, \tilde{b} \in \ell^1$, then $b \ast \tilde{b} \in \ell^1$ and 
\begin{equation} \label{eq:Banach_algebra_conv}
\| b \ast \tilde{b} \|_{1} \le \| b \|_{1} \| \tilde{b} \|_{1}.
\end{equation} 
\end{lem}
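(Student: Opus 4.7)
The plan is to reduce to the bilateral $\ell^1$ formulation and then apply the triangle inequality on the weight combined with Tonelli's theorem.

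First I would observe that the one-sided norm $\|\cdot\|_{1,\nu}$ is exactly a bilateral weighted $\ell^1$ norm after the symmetric extension. Concretely, extending any $b \in \ell_\nu^1$ to $\Z$ by $b_{-k} \bydef b_{|k|}$, one has
\[
\|b\|_{1,\nu} = |b_0| + 2\sum_{k \geq 1} |b_k| \nu^k = \sum_{k \in \Z} |b_{|k|}| \nu^{|k|},
\]
and in this symmetrised form the convolution $(b*\tilde b)_k = \sum_{k_1+k_2=k} b_{|k_1|}\tilde b_{|k_2|}$ becomes the ordinary bilateral discrete convolution on $\Z$.

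Next I would estimate $\|b * \tilde b\|_{1,\nu}$ directly. Using the triangle inequality $|k_1+k_2| \leq |k_1| + |k_2|$ together with $\nu \geq 1$, which gives $\nu^{|k_1+k_2|} \leq \nu^{|k_1|}\nu^{|k_2|}$, and Tonelli's theorem (all summands are nonnegative), I would write
\begin{align*}
\|b * \tilde b\|_{1,\nu}
&= \sum_{k \in \Z} \bigl|(b*\tilde b)_k\bigr|\, \nu^{|k|}
\leq \sum_{k \in \Z} \sum_{k_1+k_2=k} |b_{|k_1|}|\,|\tilde b_{|k_2|}|\, \nu^{|k_1+k_2|} \\
&\leq \sum_{k_1,k_2 \in \Z} |b_{|k_1|}|\nu^{|k_1|}\, |\tilde b_{|k_2|}|\nu^{|k_2|}
= \|b\|_{1,\nu} \|\tilde b\|_{1,\nu},
\end{align*}
which simultaneously establishes $b*\tilde b \in \ell_\nu^1$ (the bilateral sum is finite) and the submultiplicative bound.

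There is no real obstacle here; the only substantive ingredients are the observation that the norm coincides with the bilateral weighted $\ell^1$ norm under symmetric extension and the elementary inequality $\nu^{|k_1+k_2|} \leq \nu^{|k_1|+|k_2|}$, which crucially uses the assumption $\nu \geq 1$. The interchange of summations is justified by nonnegativity, so no further care is required.
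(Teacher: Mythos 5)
Your proof is correct: the symmetric extension identifies $\|\cdot\|_{1,\nu}$ with the bilateral weighted $\ell^1$ norm (note the convolution of symmetric sequences is again symmetric, which is the one-line check behind your first display), and the estimate via $\nu^{|k_1+k_2|}\le\nu^{|k_1|}\nu^{|k_2|}$ and Tonelli is exactly the standard argument. The paper itself omits the proof, deferring to the literature as elementary, so your write-up simply supplies the intended standard proof.
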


The final results of this short section concern the computation of norms of bounded linear operators defined on $\ell^1$, and are useful when computing the bounds $Z_0$ and $Z_2$.

\begin{lem}\label{l:Blnu1norm}
Let $\Gamma \in B(\ell^1)$, the space of bounded linear operators from $\ell^1$ to itself, acting as $(\Gamma b)_k =\sum_{k'\geq 0} \Gamma_{k,k'} b_{k'}$ for $k \geq 0$. Define the weights
$\omega=(\omega_k)_{k\geq0}$ by $\omega_0=1$ and $\omega_k = 2$ for $k\geq 1$. Then 
\[
   \| \Gamma \|_{B(\ell^1)} = \sup_{k' \geq 0} \frac{1}{\omega_{k'}}  \sum_{k\geq 0} | \Gamma_{k,k'} | \omega_{k} .
\] 
\end{lem}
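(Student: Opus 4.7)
The plan is to recognize that $\ell^1_\nu$ is a weighted $\ell^1$ space: writing the norm as $\|b\|_{1,\nu} = \sum_{k \geq 0} \omega_k |b_k|$ with $\omega_0=1$ and $\omega_k = 2\nu^k$ for $k\geq 1$, the stated identity is the standard column-sum formula for the operator norm on such a space. The proof reduces to matching upper and lower bounds.

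For the upper bound, I would expand
\[
 \|\Gamma b\|_{1,\nu} = \sum_{k \geq 0} \omega_k \Bigl|\sum_{m \geq 0} \Gamma_{k,m} b_m \Bigr|,
\]
apply the triangle inequality inside, and swap the order of summation (justified by Tonelli, since all terms are nonnegative) to obtain
\[
 \|\Gamma b\|_{1,\nu} \leq \sum_{m \geq 0} \omega_m |b_m| \cdot \frac{1}{\omega_m} \sum_{k \geq 0} \omega_k |\Gamma_{k,m}|.
\]
Bounding the second factor by its supremum over $m$ pulls a constant out of the remaining sum, leaves exactly $\|b\|_{1,\nu}$, and yields $\|\Gamma b\|_{1,\nu} \leq \bigl(\sup_{m \geq 0} \omega_m^{-1} \sum_{k \geq 0} \omega_k |\Gamma_{k,m}|\bigr) \|b\|_{1,\nu}$.

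For the matching lower bound I would test $\Gamma$ against the standard basis vectors $e_m$ defined by $(e_m)_j = \delta_{j,m}$. These belong to $\ell^1_\nu$ with $\|e_m\|_{1,\nu} = \omega_m$, and since $(\Gamma e_m)_k = \Gamma_{k,m}$,
\[
 \frac{\|\Gamma e_m\|_{1,\nu}}{\|e_m\|_{1,\nu}} = \frac{1}{\omega_m} \sum_{k \geq 0} \omega_k |\Gamma_{k,m}|.
\]
Taking the supremum over $m \geq 0$ shows that the right-hand side of the claim is a lower bound for $\|\Gamma\|_{B(\ell^1_\nu)}$, and combining with the previous paragraph yields the desired equality. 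There is no substantive obstacle here: the only analytic ingredient beyond bookkeeping is Tonelli for nonnegative series, and the sharpness via coordinate test vectors is standard. The only point requiring care is keeping the roles of the row-index $k$ and column-index $m$ (and the corresponding weights) straight in the final formula.
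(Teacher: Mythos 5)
Your argument is correct, and it is the standard proof; the paper in fact omits the proof of this lemma entirely (it is declared elementary, with a pointer to the literature), so there is no in-paper argument to diverge from. One point is worth making explicit: your final formula, $\sup_{m\geq 0}\frac{1}{\omega_m}\sum_{k\geq 0}\omega_k|\Gamma_{k,m}|$, does not literally coincide with the display in the lemma as printed, which reads $\sup_{k\geq 0}\frac{1}{\omega_k}\sum_{m\geq 0}|\Gamma_{k,m}|\omega_k$; there both weights carry the index $k$ and would cancel, leaving the unweighted row-sup $\sup_k\sum_m|\Gamma_{k,m}|$, which is not the $B(\ell^1_\nu)$ norm. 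That is evidently a typo in the statement (the row index $k$ and the column index $m$ have been interchanged in the weights), and your version is the correct one: it matches the weighted column-sum expression $\frac{1}{\omega_j}\sum_{i}|\Gamma_{i,j}|\,\omega_i$ appearing in Corollary~\ref{cor:OperatorNorm} and the way the lemma is actually used for the $Z_0$ and $Z_2$ bounds. Both halves of your proof are sound: the upper bound by the triangle inequality and Tonelli (interchanging the nonnegative double sum), and the sharpness by testing on the coordinate vectors $e_m$, for which $\|e_m\|_{1,\nu}=\omega_m$ and $(\Gamma e_m)_k=\Gamma_{k,m}$, so the supremum over columns is attained in the limit. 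Your closing caution about keeping the roles of $k$ and $m$ straight was exactly the right instinct here.
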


The following consequence of Lemma~\ref{l:Blnu1norm} provides an explicit bound on norms of bounded linear operators on $\ell^1$ with a specific structure, namely as in \eqref{eq:Gamma_blocks}.

\begin{cor}\label{cor:OperatorNorm}
Let $\Gamma^{(m)}$ be an $m \times m$ matrix, 
$\{\mu_n\}_{n=m}^{\infty}$
be a sequence of numbers with 
\[
|\mu_n| \leq |\mu_{m}|, \qquad\text{for all } n \geq m,
\]
and 
$\Gamma \colon \ell^1 \to \ell^1$ be the linear operator defined by
\begin{equation} \label{eq:Gamma_blocks}
\Gamma b = 
\begin{pmatrix}
\Gamma^{(m)} &  & 0   \\
  & \mu_{m}  &   \\
0  &   & \mu_{m+1} & \\  
& & & \ddots
\end{pmatrix}
\begin{pmatrix}
 b^{(m)}  \\
  b_{m}   \\
  b_{m+1} \\
  \vdots 
\end{pmatrix}.
\end{equation}
Here $b^{(m)} = (b_0, \ldots, b_{m-1})^T \in \R^{m}$. Then $\Gamma \in B(\ell^1)$ and 
\begin{equation} \label{eq:normA}
\| \Gamma \|_{B(\ell^1)} =  \max (K, |\mu_m|),
\end{equation}
where 
\[
K \bydef \max_{0 \leq j \leq m-1} \frac{1}{\omega_j}\sum_{i=0}^{m-1} |\Gamma_{i,j}| \omega_i.
\]
\end{cor}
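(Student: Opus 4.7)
The result is an immediate corollary of Lemma~\ref{l:Blnu1norm}, which identifies $\|\Gamma\|_{B(\ell_\nu^1)}$ with a supremum of weighted column sums over $\ell_\nu^1$. The plan is simply to evaluate this supremum by exploiting the block-diagonal structure in~\eqref{eq:Gamma_blocks}, which decouples the finite block from the tail and makes every column sum straightforward to compute in closed form.

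First I would partition the column indices into two regimes. For a tail column index $j \ge m$, the only nonzero entry of $\Gamma$ in column $j$ is the diagonal term $\mu_j$, so the weighted column sum collapses to $\frac{1}{\omega_j}|\mu_j|\omega_j = |\mu_j|$. By the hypothesis $|\mu_n| \le |\mu_m|$ for all $n\ge m$, with equality attained at $n=m$, the supremum over tail columns equals $|\mu_m|$. For a column index $0\le j\le m-1$ inside the finite block, the block structure ensures that every nonzero entry in column $j$ lies in rows $0,1,\dots,m-1$ of $\Gamma^{(m)}$, because the tail block is strictly diagonal and hence does not couple to the first $m$ coordinates. The corresponding weighted column sum reduces to $\frac{1}{\omega_j}\sum_{i=0}^{m-1}|\Gamma_{i,j}|\omega_i$, and by definition the supremum of these over $0\le j\le m-1$ equals $K$.

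Combining the two regimes yields $\|\Gamma\|_{B(\ell_\nu^1)}=\max(K,|\mu_m|)$, which is exactly~\eqref{eq:normA}; in particular boundedness of $\Gamma$ follows at once, since $K$ is a maximum over finitely many nonnegative numbers and the tail contribution is bounded by $|\mu_m|$. I do not anticipate any genuine obstacle here, as the argument is entirely mechanical once Lemma~\ref{l:Blnu1norm} is invoked. The only care needed is to keep the weight convention $\omega_0=1$, $\omega_k=2\nu^k$ for $k\ge 1$ consistent between the finite and tail parts, and to note that the hypothesis on the tail coefficients is sharp at $n=m$, which is what allows the bound $|\mu_m|$ to actually be attained rather than merely serve as an upper estimate.
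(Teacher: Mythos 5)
Your proof is correct and follows exactly the route the paper intends: the corollary is stated as a consequence of Lemma~\ref{l:Blnu1norm} (whose proof the paper omits, citing the literature), and evaluating the weighted column sums separately for the finite block ($j<m$, giving $K$) and the diagonal tail ($j\ge m$, giving $\sup_{n\ge m}|\mu_n|=|\mu_m|$) is precisely the intended argument. Nothing is missing.
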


\subsection{The \boldmath $Y_0$ \unboldmath bound}

The nonlinear term of $F_1(\ba)$ given in \eqref{eq:F=0_CR} involves the convolution product $(\ba_1*\ba_1*\ba_1)_{k}$, which vanishes for $k \geq 3m-2$. This implies that $(F_1(\ba))_k=0$ for all $k \ge 3m-2$. Also, $(F_2(\ba))_k=0$, for all $k \ge m$. We set
\begin{align*} 
Y_0^{(1)} &\bydef \biggl| \sum_{j=1}^2 \left( A^{(m)}_{1,j} F^{(m)}_j(\ba) \right)_0 \biggr|  + 
2 \sum_{k=1}^{m-1} \biggl| \sum_{j=1}^2 \left( A^{(m)}_{1,j} F^{(m)}_j(\ba) \right)_k \biggr| 
 \\
Y_0^{(2)} &\bydef \biggl| \sum_{j=1}^2 \left( A^{(m)}_{2,j} F^{(m)}_j(\ba) \right)_0 \biggr| + 2 \sum_{k=1}^{m-1} \biggl| \sum_{j=1}^2 \left( A^{(m)}_{2,j} F^{(m)}_j(\ba) \right)_k \biggr| 
+  2 \sum_{k=m}^{3m-3} \biggl| \frac{1}{k}  (F_1(\ba))_{k} \biggr| 
\end{align*}
which is a collection of finite sums that can be evaluated with interval arithmetic. We infer that 
\[
\| [ AF(\ba)]_i \|_{1} = \biggl\| \sum_{j=1}^2 A_{i,j} F_j(\ba) \biggr\|_{1}
\le Y_0^{(i)}, \qquad\text{for } i=1,2,
\]
and we set 
\begin{equation} \label{eq:Y0_CR}
Y_0 \bydef \max \left(Y_0^{(1)},Y_0^{(2)} \right).
\end{equation}

\subsection{The \boldmath $Z_0$ \unboldmath bound}

We look for a bound of the form
$
\| I - A A^{\dagger}\|_{B(X)} \le Z_0
$.
Recalling the definitions of $A$ and $\dagA$ given in \eqref{eq:A_CR} and \eqref{eq:dagA_CR},
let $B \bydef I - A \dagA$ the bounded linear operator represented as
\begin{equation*} 
B = 
		\begin{pmatrix}
		 B_{1,1} & B_{1,2} \\
		 B_{2,1} & B_{2,2}
		\end{pmatrix}.
\end{equation*}
We remark that $( B_{i,j} )_{n_1,n_2}=0$ for any $i,j =1,2$ 
whenever $n_1 \ge m$  or $n_2 \ge m$.
Hence we can compute the norms $\|B_{i,j}\|_{B(\ell^1)}$ using Lemma~\ref{l:Blnu1norm}.
Given $h =(h_1,h_2)  \in X = \ell^1 \times \ell^1$ with $\|h\|_X = \max(\|h_1\|_{1},\|h_2\|_{1}) \leq 1$,
we obtain
\[
\|(Bh)_i \|_{1} = \biggl\|  \sum_{j=1}^2 B_{i,j} h_j \biggr\|_{1} 
\le   \sum_{i=1}^2 \| B_{i,j} \|_{B(\ell^1)}.
\]
Hence we define
\begin{equation} \label{eq:Z0_CR}
Z_0 \bydef \max\left(
 \|B_{1,1}\|_{B(\ell^1)} + \|B_{1,2}\|_{B(\ell^1)} ,
 \|B_{2,1}\|_{B(\ell^1)} + \|B_{2,2}\|_{B(\ell^1)} 
  \right),
\end{equation}
where each norm $\|B_{i,j}\|_{B(\ell^1)}$ can be computed using formula \eqref{eq:normA} with vanishing tail terms.

\subsection{The \boldmath $Z_1$ \unboldmath bound} \label{sec:Z1_CR}

Recall that we look for the bound 
$
\| A[DF(\bx) - A^{\dagger} ] \|_{B(X)} \le Z_1
$.
Given $h=(h_1,h_2) \in X$ with $\|h\|_X \le 1$, set
\[
z \bydef [DF(\ba) -  A^{\dagger} ] h.
\]
Since in $z$ some of the terms involving $((h_1)_k)_{k=0}^{m-1}$ will cancel, it is useful to introduce
$\widehat h_1$ as follows:
\[
 (\widehat h_1)_k \bydef \begin{cases} 0 & \text{if } k<m,\\
  (h_1)_k & \text{if } k \geq m. \end{cases}
\]
Then,
 \begin{align*}
 (z_1)_k &=
 \begin{cases}
 \displaystyle -3 \lambda (\ba_1*\ba_1*\widehat h_1)_k & \text{for } k=0,\dots,m-1 \\
 \displaystyle \lambda (h_1)_{k} - 3 \lambda (\ba_1*\ba_1* h_1)_k & \text{for } k \ge m \\
 \end{cases}
 \\
 (z_2)_k &=
 \begin{cases}
 	0& \text{for }k=0,\dots,m-1\\
 	 (h_2)_{k} & \text{for }k \ge m.
 \end{cases}
 \end{align*}
By \eqref{eq:hQ}, we get that 
\[
| (z_1)_k| \le 3 |\lambda| \hat{\Q}_k(\ba_1*\ba_1) , \qquad \text{for } k =0,\dots,m-1.
\]
Hence,
\begin{align*}
\|(Az)_1 \|_{1} & \le \sum_{j=1}^2 \| A_{1,j} z_j \|_{1}  
=  \sum_{k=0}^{m-1} \bigl| \bigl(A^{(m)}_{1,1} z_1^{(m)} \bigr)_k \bigr| 
+  \sum_{k \ge m}  \frac{1}{k} | (z_2)_k | \\
& \le 3 |\lambda| \sum_{k=0}^{m-1} \bigl| \bigl( |A^{(m)}_{1,1}| \hat{\Q}^{(m)}(\ba_1*\ba_1) \bigr)_k \bigr| 
+ \frac{1}{2m} \left( 2 \sum_{k \ge m} | (z_2)_k | \right) \\
& \le 3 |\lambda|\sum_{k=0}^{m-1} \bigl| \bigl( |A^{(m)}_{1,1}| \hat{\Q}^{(m)}(\ba_1*\ba_1) \bigr)_k \bigr| 
+ \frac{1}{2m} \| h_2\|_{1} \\
& \le 3 |\lambda| \sum_{k=0}^{m-1} \bigl| \bigl( |A^{(m)}_{1,1}| \hat{\Q}^{(m)}(\ba_1*\ba_1) \bigr)_k \bigr|
 + \frac{1}{2m} \,\bydef Z_1^{(1)},
\end{align*}
and similarly, now using the Banach algebra property of Lemma~\ref{lem:Banach_algebra_conv},
\begin{align*}
\|(Az)_2 \|_{1} & \le \sum_{j=1}^2 \| A_{2,j} z_j \|_{1}  
= 
 \| A_{2,1} z_1 \|_{1} 
 =  
\sum_{k=0}^{m-1} \bigl| \bigl(A^{(m)}_{2,1} z_1^{(m)} \bigr)_k \bigr| 
+  \sum_{k \ge m}  \frac{1}{k} | (z_1)_k |  \\
& \le 3 |\lambda| \sum_{k=0}^{m-1} \bigl| \bigl( |A^{(m)}_{2,1}| \hat{\Q}^{(m)}(\ba_1*\ba_1) \bigr)_k \bigr|  
+ \frac{1}{2m} \left( 2 \sum_{k \ge m} | (z_1)_k |  \right) \\
& \le 3 |\lambda|\sum_{k=0}^{m-1} \bigl| \bigl( |A^{(m)}_{2,1}| \hat{\Q}^{(m)}(\ba_1*\ba_1) \bigr)_k \bigr|   
+ \frac{|\lambda|}{2m} \left( \| h_1\|_{1} + 3 (\| \ba_1 \|_{1})^2 \| h_1\|_{1} \right) \\
& \le 3 |\lambda| \sum_{k=0}^{m-1} \bigl| \bigl( |A^{(m)}_{2,1}| \hat{\Q}^{(m)}(\ba_1*\ba_1) \bigr)_k \bigr|  
 + \frac{|\lambda|}{2m} \left( 1 + 3 (\| \ba_1 \|_{1})^2  \right) \,\bydef Z_1^{(2)}.
\end{align*}
We thus define 
\begin{equation} \label{eq:Z1_CR}
Z_1 \bydef \max\left(Z_1^{(1)},Z_1^{(2)} \right).
\end{equation}

\subsubsection{The \boldmath $Z_2$ \unboldmath bound}

Let $r>0$ and $c=(c_1,c_2) \in B_r(\ba)$, that is $\| c- \ba \|_{X} = \max( \|c_1- \ba_1\|_{1},\|c_2- \ba_2\|_{1} ) \le r$. Given $\| h \|_X \leq 1$, note that $\left( [DF_2(c)-DF_2(\ba)] h \right)_k=0$ and that
\[
\left( [DF_1(c)-DF_1(\ba)] h \right)_k = 
- 3 \lambda \left( (c_1 * c_1 - \ba_1 * \ba_1) * h_1 \right)_{k} 
\]
so that 
\begin{align*}
\| A [DF(c) - DF(\ba)]\|_{B(X)} &= \sup_{\| h \|_X \leq 1} \| A [DF(c) - DF(\ba)] h \|_{X} \\
& \le \| A \|_{B(X)} \sup_{\| h \|_X \leq 1} \| [DF(c) - DF(\ba)] h \|_{X} \\
& = 3 |\lambda| \| A \|_{B(X)} \sup_{\| h \|_X \leq 1} \| (c_1- \ba_1)*(c_1+\ba_1) * h_1 \|_{1} \\
& \le 3 |\lambda| \| A \|_{B(X)} \sup_{\| h \|_X \leq 1} \| c_1- \ba_1 \|_{1} \| c_1+ \ba_1 \|_{1} \| h_1 \|_{1}  \\
& \le 3 |\lambda| \| A \|_{B(X)} r (\| c_1 \|_{1}+ \| \ba_1 \|_{1}) \\
& \le 3 |\lambda| \| A \|_{B(X)} r (r+ 2\| \ba_1 \|_{1}).
\end{align*}
Then, assuming a loose a priori bound $r \le 1$ on the radius, we set
\begin{align} \label{eq:Z2_CR}
Z_2 &\bydef 3 |\lambda| \| A \|_{B(X)} (1+ 2\| \ba_1 \|_{1}),
\end{align}
with
\[
 \| A \|_{B(X)} =  \max\left(
 \|A_{1,1}\|_{B(\ell^1)} + \|A_{1,2}\|_{B(\ell^1)} ,
 \|A_{2,1}\|_{B(\ell^1)} + \|A_{2,2}\|_{B(\ell^1)} 
  \right),
\]
where each operator norm $\|A_{i,j}\|_{B(\ell^1)}$ can be computed using formula \eqref{eq:normA}. 


\section{The travelling wave problem} \label{s:TW}

Before we discuss the existence results for the travelling wave problem~\eqref{e:TW} we discuss its spectral properties.

\subsection{Spectral properties}
\label{s:TWspectralflow}

Equation~\eqref{e:TW} may be written as a system of first order equations
\begin{equation}\label{e:TWsystem}
\left\{
\begin{aligned}
  u_t  &= v,\\
  v_t  &= cv - u_{x_1 x_1} -u_{x_2 x_2} - \psi_\lambda(u), 
  \end{aligned}
  \right.
\end{equation}
with Neumann boundary conditions on the square.
The spectral problem for~\eqref{e:TWsystem} 
is directly related to the spectral problem for the parabolic equation
\begin{equation}\label{e:parabolic}
  u_t = u_{x_1 x_1} + u_{x_2 x_2} + \psi_\lambda(u),
\end{equation}
again with Neumann boundary conditions on the square.
First, we note that any equilibrium of~\eqref{e:TW} is of the form $(u,v)=(u_*,0)$, with $u_*$ an equilibrium of~\eqref{e:parabolic}.
Furthermore, the eigenvalue problems of the linearized operators at these equilibria are 
\begin{equation}\label{e:rho}
\left\{
\begin{aligned}
  \rho u &= v,\\
  \rho v &= cv -  u_{x_1 x_1}-u_{x_2 x_2} - \psi'_\lambda(u_*) u, 
  \end{aligned}
  \right.
\end{equation}
and
\begin{equation}\label{e:sigma}
    \sigma u  = u_{x_1 x_1} + u_{x_2 x_2} + \psi'_\lambda(u_*) u,	
\end{equation}
respectively, both with Neumann boundary conditions.
Hence eigenvalues $\rho$ of~\eqref{e:rho} and eigenvalues $\sigma$ of~\eqref{e:sigma} are related through
\begin{equation}\label{e:sigmamu}
  \sigma = c \rho - \rho^2.
\end{equation}
Since the elliptic operator in~\eqref{e:sigma} is self-adjoint, all eigenvalues $\sigma$ are real.
Each negative eigenvalue $\sigma$ of~\eqref{e:sigma}, of which there are infinitely many, corresponds to a pair of eigenvalues 
\[
  \rho = \rho_{\pm}(\sigma) = \frac{c}{2} \pm \biggl(\frac{c^2}{4} - \sigma \biggr)^{\!1/2} ,
\]
one positive and one negative (which is of course consistent with~\eqref{e:TWsystem} being strongly indefinite). 
For each positive $\sigma$, of which there are at most finitely many, there are two eigenvalues $\rho=\rho_\pm$
(a double eigenvalue for $\sigma=\frac{c^2}{4}$), both with positive real part. 
In particular, all eigenvalues of~\eqref{e:rho} lie in the union $\{ \text{Im}(z)=0 \} \cup \{ \text{Re}(z)=\frac{c}{2} \} \subset \mathbb{C}$.
 Hence, when parameters are varied eigenvalues can only pass from the left half-plane to the right half-plane through the origin.
Since the eigenvalue problem~\eqref{e:rho} associated to~\eqref{e:TWsystem} is thus directly and explicitly related to the self-adjoint eigenvalue problem~\eqref{e:sigma}, it is reasonable to expect that 
for $c>0$ (or $c<0$) the spectral flow for the linearization of~\eqref{e:TWsystem} is well-defined, and this is indeed the case, see~\cite[Section~5.2]{BakkervdBergvdVorst} and~\cite{MR1331677}. Furthermore, it follows from~\eqref{e:sigmamu} that along a homotopy eigenvalues $\rho$ of~\eqref{e:rho} and $\sigma$ of~\eqref{e:sigma} cross the origin simultaneously and in the same direction. 
  The spectral flows for~\eqref{e:TWsystem} and~\eqref{e:parabolic} are thus ``the same'' in the sense that the relative index of a pair of equilibria for~\eqref{e:TWsystem} is equal to the relative index of this pair for~\eqref{e:parabolic}. Since the latter is easier to analyse (it is scalar), we compute relative indices using the parabolic equation~\eqref{e:parabolic} and then draw conclusions for the strongly indefinite system~\eqref{e:TWsystem}, or, equivalently, the travelling wave problem~\eqref{e:TW}.

\subsection{Problem reformulation}

As explained in Section~\ref{s:TWspectralflow}, to draw conclusions about~\eqref{e:TW},
we compute equilibria and associated Morse indices
of the parabolic equation
\begin{equation} \label{eq:nonlinear_parabolic}
  u_t = \Delta u +  \psi_\lambda(u) = \Delta u + \lambda (u-u^3),
\end{equation}
with Neumann boundary conditions on the square $[0,\pi] \times [0,\pi]$.
Here we have chosen $\lambda_1=\lambda_2 \bydef \lambda$.
We perform the cosine transform
\begin{equation*}
  u(x) = \sum_{k \in \Z^2} a_k e^{i k \cdot x}
  = \sum_{k \in \N^2} m_k a_k \cos(k_1 x_1) \cos(k_2 x_2)  
\end{equation*}
where the multiplicities are
\[
  m_k=m_{k_1,k_2} \bydef \begin{cases}
  1 & \text{for } k_1=k_2=0\\
  2 & \text{for } k_1=0, k_2>0\\
  2 & \text{for } k_1>0, k_2=0\\
  4 & \text{for } k_1>0, k_2>0.
  \end{cases}
\]
We will from now on assume $a_{k_1,k_2} = a_{|k_1|,|k_2|} \in \R$.
Since the Neumann boundary conditions allow a smooth doubly $2\pi$-periodic extension of equilibrium solutions of~\eqref{eq:nonlinear_parabolic} to $\R^2$ with the symmetries $u(x_1,x_2)=u(-x_1,x_2)=u(x_1,-x_2)$, assuming a cosine-cosine expansion does not impose any restrictions.
The equilibrium equations for the unknowns $(a_k)_{k\in\N^2}$ become 
\begin{equation} \label{e:eqa}
F_k(a) \bydef  m_k \big[ (- (k_1^2+k_2^2) +\lambda) a_k - \lambda (a*a*a)_k \bigl],
\end{equation}
with the usual convolution. 
Here the choice to include the factor $m_k$
is for the same reason as the factor $2$ in~\eqref{eq:F=0_CR}: it makes the symmetry of the Jacobian $DF$ apparent.
We denote $F(a) =  \{F_k(a)\}_{k \in \N^2}$.
For the norm in Fourier space we select the 
$\ell^1$-norm:
\begin{equation}\label{e:ell1nu2D}
  \|a\|_{1} \bydef \sum_{k \in \N^2} m_k |a_k| ,
\end{equation}
with 
$ |k| \bydef  \max\{|k_1|,|k_2|\}$.
One nice thing about the $\ell^1$-norm~\eqref{e:ell1nu2D} is the Banach algebra property
$  \|a*b\|_{1} \leq \|a\|_{1} \|b\|_{1}$.
This makes our space 
\begin{equation}
	\label{e:X1}
  X = \{ a=(a_k)_{k\in\N^2} \,:\, a_k \in \R \,,\,   \|a\|_{1} < \infty \}
\end{equation}
into a Banach algebra. 

Computing equilibria of \eqref{eq:nonlinear_parabolic} reduces to find $a \in X$ such that $F(a)=0$, where $F$ is given component-wise by \eqref{e:eqa}. The Newton-Kantorovich approach of Theorem~\ref{thm:radii_polynomials} is applied to achieve this task. Following a similar approach as in Section~\ref{s:CR}, we compute an approximate solution $\ba$ of $F=0$, define the linear operators $\dagA$ and $A$, and compute the bounds $Y_0$, $Z_0$, $Z_1$ and $Z_2(r)$ satisfying \eqref{eq:general_Y_0}, \eqref{eq:general_Z_0}, \eqref{eq:general_Z_1} and \eqref{eq:general_Z_2}. 
The derivation of the detailed expressions for $Y_0$, $Z_0$, $Z_1$ and  $Z_2(r)$ is omitted, as this analysis is analogous to Section~\ref{s:CR}, see also~\cite{OK2D} for a similar (but more involved) problem in two space dimensions.
Defining the radii polynomial $p(r)$ as in \eqref{eq:general_radii_polynomial}, if there is $r_0>0$ such that $p(r_0)<0$, then there exists a unique $\ta$ with $\| \ta - \ba\| < r_0$ such that $F(\ta) = 0$. After having obtained computer-assisted proofs of a solution $\ta$ of $F=0$, we use the theory of Section~\ref{sec:rig_comp_spectrum} and Section~\ref{sec:rig_comp_relative_indices} to compute their relative indices. Using this approach, we proved the solutions and relative indices depicted in Figure~\ref{f:TW}. All proofs used truncation dimension $m=20$ in Fourier space.
The code {\tt proveall12.m}, available at~\cite{codes_webpage}, performs all the computations with interval arithmetic.  

Having established these equilibria and their relative indices, the result on the existence and multiplicity of travelling waves for the PDE problem~\eqref{e:TW}, as formulated in Theorem~\ref{t:TW}, follow from the forcing lemma~\ref{l:forcing}.


\section{The Ohta-Kawasaki problem} \label{s:OK}

Our third example is the Ohta-Kawasaki equation \eqref{e:OK}, which we recall is given by
\[
	\begin{cases}
   u_t = - u_{xxxx} - (\psi_\lambda(u))_{xx} - \lambda \sigma u, &\quad\text{for } x\in [0,\pi],\\
    u_x(t,0)=u_x(t,\pi)=0, \\
    u_{xxx}(t,0)=u_{xxx}(t,\pi)=0,\\
	\int_0^\pi u(0,x) dx =0 .
  \end{cases}
\]
Plugging the cosine Fourier expansion (using the symmetry coming from the Neumann boundary conditions and the fact that $\int_0^\pi u(0,x) dx =0$)
\[
u(x)=\sum_{k \in \Z} a_k e^{ik x}, \qquad \text{with } a_k \in \R,~~ a_{-k}=a_k,~~\text{and}~~ a_0=0,
\]
into the steady state equation $- u_{xxxx} - (\psi_\lambda(u))_{xx} - \lambda \sigma u = 0$ yields
\begin{equation} \label{eq:F_k_OK}
F_k(a) \bydef  \left( -k^4 + \lambda k^2  - \lambda \sigma \right)  a_k - \lambda k^2 (a^3)_k=0.
\end{equation}
Here $a=(a_k)_{k \ge 1}$ and
\[
(a^3)_k \bydef \sum_{\stackrel{k_1+k_2+k_3=k}{k_i \in \Z \setminus \{0\} }} a_{|k_1|} a_{|k_2|} a_{|k_3|}.
\]
The relations $F_{-k}=F_k$ and $F_0(a)=0$ imply that we only need to solve $F_k=0$ for $k \ge 1$.
We thus set $F \bydef (F_k)_{k \ge 1}$. The Banach space $X$ used in the present example is
\begin{equation}\label{def:ellOne}
X = \left\{ a = \{a_k\}_{k \ge 1 }  :  \| a \|_{1} \bydef 2 \sum_{k \ge 1} |a_k|< \infty \right\}.
\end{equation}

Computing equilibria of \eqref{e:OK} reduces to find $a \in X$ such that $F(a)=0$, where $F$ is given component-wise by \eqref{eq:F_k_OK}. This is done by applying Theorem~\ref{thm:radii_polynomials}. As in the other examples, we compute an approximate solution $\ba$ of $F=0$, define the linear operators $\dagA$ and $A$, and compute the bounds $Y_0$, $Z_0$, $Z_1$ and $Z_2(r)$ satisfying \eqref{eq:general_Y_0}, \eqref{eq:general_Z_0}, \eqref{eq:general_Z_1} and \eqref{eq:general_Z_2}. We omit the derivation of these bounds. Defining the radii polynomial $p(r)$ as in \eqref{eq:general_radii_polynomial}, if there is $r_0>0$ such that $p(r_0)<0$, then there exists a unique $\ta$ with $\| \ta - \ba\| < r_0$ such that $F(\ta) = 0$. After having obtained computer-assisted proofs of a solution $\ta$ of $F=0$, we use the theory of Section~\ref{sec:rig_comp_spectrum} and Section~\ref{sec:rig_comp_relative_indices} to compute their relative indices. Using this approach, we proved the solutions and relative indices depicted in Figure~\ref{f:OK}. All proofs used truncation dimension $m=40$ in Fourier space. The code {\tt script\_proofs\_OK.m}, available at~\cite{codes_webpage}, performs all the computations with interval arithmetic. 

Once we have the equilibria and their relative indices, the result on the existence and multiplicity of connection orbits for the PDE problem~\eqref{e:OK}, as stated in Theorem~\ref{t:OK}, follow from the forcing lemma~\ref{l:forcing}, see the proof strategy outlined in the introduction.


\end{document}